\documentclass[a4paper,11pt,reqno]{amsart}
\usepackage[centering,includeheadfoot,margin=2.5 cm]{geometry}
\usepackage{graphics,enumitem,epsfig,textcomp}
\usepackage{amsfonts,amsmath,amssymb,euscript,color,mathrsfs}
\usepackage[utf8]{inputenc}	

\usepackage{cite}

\usepackage{hyperref}
\hypersetup{
	colorlinks,
	citecolor=green,
	filecolor=black,
	linkcolor=blue,
	urlcolor=blue
}
\hypersetup{linktocpage}

\usepackage[foot]{amsaddr}

\newcommand*\di{\,\mathrm{d}}
\newcommand\D{\,\mathrm{d}}

\newcommand\R{\mathbb{R}}
\newcommand\N{\mathbb{N}}
\newcommand\bl{\left(}
\newcommand\br{\right)}
\newcommand\Norm[2][]{\left\| #2 \right\|_{#1}}
\newcommand\CHI{S}
\newcommand\Qalpha{\tilde{\alpha}}
\newcommand\Qbeta{\tilde{\beta}}
\newcommand\dphi{s}
\newcommand\uzero{U_0}
\newcommand\udphi{U_\dphi}
\newcommand\AV[2][]{\left| #2 \right|_{#1}}

\newcommand{\REV}[1]{#1}

\newtheorem{thm}{Theorem}
\newtheorem{cor}[thm]{Corollary}
\newtheorem{lem}[thm]{Lemma}
\newtheorem{prp}[thm]{Proposition}
\newtheorem{ass}{Assumptions}
\theoremstyle{definition}

\newtheorem{rem}[thm]{Remark}

\numberwithin{equation}{section}
\numberwithin{thm}{section}

\title{Analysis of Nonlinear Poroviscoelastic Flows with Discontinuous Porosities}
\author{Markus Bachmayr$^1$}
\email{bachmayr@igpm.rwth-aachen.de}
\author{Simon Boisser\'ee$^1$}
\email{boisseree@igpm.rwth-aachen.de}
\author{Lisa Maria Kreusser$^2$}
\email{lmk54@bath.ac.uk}
\address{$^1$ Institut f\"ur Geometrie und Praktische Mathematik, RWTH Aachen University, Templergraben 55, 52056 Aachen, Germany \\ $^2$ Department of Mathematical Sciences, University of Bath, Bath BA2 7AY, United Kingdom}
\date{\today}

\thanks{M.B.\ acknowledges funding by Deutsche Forschungsgemeinschaft (DFG, German Research Foundation) -- project numbers 233630050, 442047500 -- TRR 146, SFB 1481.
S.B.\ has been funded in part by the M3ODEL consortium at Johannes Gutenberg University Mainz. L.M.K.\ acknowledges support from Magdalene College, Cambridge (Nevile Research Fellowship).}

\begin{document}

\begin{abstract}
    Existence and uniqueness of solutions is shown for a class of viscoelastic flows in porous media with particular attention to problems with nonsmooth porosities. The considered models are formulated in terms of the time-dependent nonlinear interaction between porosity and effective pressure, which in certain cases leads to porosity waves. In particular, conditions for well-posedness in the presence of initial porosities with jump discontinuities are identified.
\end{abstract}

\maketitle

\section{Introduction}
In porous media flow models in geophysical applications, the porosity of the medium is often treated as a static quantity. This assumption, however, need not always be justified due to the ability of rocks to deform by compaction.
In certain cases, the interaction of porosity and pressure can lead to the formation of \emph{porosity waves}. 
These can take the form of solitary waves formed by travelling higher-porosity regions \cite{Yarushina2015b} or of chimney-like channels \cite{Raess2019}. 
    
Such effects are important, for instance, in the modelling of rising magma \cite{McKenzie1984,Barcilon1986}, where porosity waves arise due to high temperatures. Such waves or channels can also form in soft sedimentary rocks, in salt formations or under the influence of chemical reactions; see for example \cite{Yarushina2015a,Raess2018,Raess2019}.
Quantifying uncertainties caused by the formation of preferential flow pathways can thus be important for safety analyses in geoengineering applications \cite{Yarushina2022}.

Here, we consider a poroviscoelastic model \REV{that generalizes} the one introduced in \cite{Connolly1998,Vasilyev1998} for the interaction of porosity and pressure. The equations for porosity $\phi$ and effective pressure $u$ studied numerically in \cite{Vasilyev1998} are of the basic form
\begin{equation}\label{eq:cp}
  \begin{aligned}
      \partial_t \phi & = - \nabla \cdot \phi^n (\nabla u + f) ,\\
      Q \partial_t u & = \nabla \cdot \phi^n (\nabla u + f) - \phi^m u ,
  \end{aligned}
\end{equation}
with a constant $Q>0$, fixed exponents $m,n\geq 1$ and a constant vector $f$, subject to initial conditions on $\phi$ and $u$ as well as boundary conditions on $u$. The problem \eqref{eq:cp} can be regarded as a slightly restricted version of the more general case considered in this work, but \eqref{eq:cp} has very similar features and is obtained by minor additional simplifications. 

Note that \eqref{eq:cp} can be regarded as the combination of a hyperbolic equation for $\phi$ and a parabolic equation for $u$. In the absence of strong smoothness assumptions on $u$, the advection coefficient in the first equation can be of low regularity. However, \eqref{eq:cp} can also be rewritten as
\begin{equation}\label{eq:cp2}
  \begin{aligned}
      \partial_t \phi & = - \phi^m u - Q \partial_t u ,\\
      Q \partial_t u & = \nabla \cdot \phi^n (\nabla u + f) - \phi^m u ,
  \end{aligned}
\end{equation}
where the parabolic problem for $u$ is coupled to a pointwise ordinary differential equation for $\phi + Q u$. For this reason, solutions of this problem turn out to be more well-behaved than \eqref{eq:cp} would suggest. The limiting case $Q=0$ in \eqref{eq:cp2}, corresponding to a purely viscous flow, takes the form 
\begin{equation}\label{eq:magma}
    \begin{aligned}
        \partial_t \phi & = - \phi^m u  ,\\
        0 & = \nabla \cdot \phi^n (\nabla u + f) - \phi^m u .
    \end{aligned}
  \end{equation}
This model has been studied in particular in the context of magma dynamics \cite{Wiggins1995,Simpson2006,Ambrose2018}.

For modelling sharp transitions between materials in \eqref{eq:cp2} and \eqref{eq:magma}, it is important to be able to treat porosities with \emph{jump discontinuities}. These turn out to be determined mainly by the initial condition of the form 
\[ \phi|_{t=0} = \phi_0 \,, \]
which arises in both problems, with a given function $\phi_0$ on the spatial domain. The initial data for $u$ that are additionally required in \eqref{eq:cp} and \eqref{eq:cp2} can be assumed to be sufficiently smooth in practically relevant models. For $\phi_0$ of low regularity, existence and uniqueness of solutions to  \eqref{eq:cp2} and \eqref{eq:magma} is not covered by existing results. While for \eqref{eq:magma}, existence and uniqueness of solutions have been obtained in \cite{Simpson2006} in one spatial dimension and in \cite{Ambrose2018} in the higher-dimensional case, these results require higher-order Sobolev regularity of $\phi_0$.

\subsection{Novel contributions} 
In the present work, for more general versions of both \eqref{eq:cp2} and the viscous limiting case \eqref{eq:magma}, we consider existence and uniqueness of solutions under low regularity requirements on the initial data $\phi_0$. In particular, we focus on conditions on $\phi_0$ that permit jump discontinuities. We arrive at different conditions for \eqref{eq:cp2} and \eqref{eq:magma}, depending on the dimension $d$ of the spatial domain $\Omega$.

For the purely viscous case as in \eqref{eq:magma}, when $d\in\{1,2\}$, we obtain existence and uniqueness of solutions up to the maximal time of existence under the sole requirement $\phi_0 \in L^\infty(\Omega)$. Using a different technique, we still obtain existence of solutions (where uniqueness remains open) for arbitrary $d$ provided that $\phi_0 \in BV(\Omega) \cap L^\infty(\Omega)$, which includes functions with discontinuities. Moreover, for arbitrary $d$ we also obtain a new result on existence and uniqueness assuming $\phi_0 \in C^{k,1}$ for some $k\geq 0$.   

For general poroviscoelastic problems, with \eqref{eq:cp2} as a special case, we obtain well-posedness for $\phi_0$ with jump discontinuities only under more restrictive assumptions:
with a partition of $\Omega$ into finitely many open subsets $\Omega^j$ with $C^{1,\mu}$-boundaries for a $\mu>0$, we assume that $\phi_0$ is H\"older continuous on $\overline{\Omega}^j$ for each $j$.
We then obtain existence and uniqueness of solutions where $\phi$ is also piecewise H\"older continuous in the spatial variables. These conditions cover typical model cases treated in applications.

In such model cases, initial porosities $\phi_0$ are often composed of regions corresponding to different materials separated by interfaces that, in relation to the considered length scales, need to be modelled as sharp transitions; it is thus crucial that the models remain well-posed in the limiting case of spatially discontinuous porosities. A second conclusion to be drawn from our results concerns the time evolution of discontinuities in $\phi$: since in all considered cases, we obtain $\phi \in C([0,T]; L^\infty(\Omega))$, the locations in $\Omega$ of jump discontinuities in $\phi$ cannot change with time -- or in other words, these locations are essentially determined by those in $\phi_0$.

We focus here on the case of bounded domains $\Omega$ with homogeneous Dirichlet conditions on $u$, which are frequently used in application scenarios (see for example \cite{Raess2018,Raess2019}), and on the well-posedness for potentially small time intervals. The existence of solutions on $\R^d$ for arbitrarily long times under the regularity assumptions considered here is left for future investigation.

\subsection{Outline}
In Section \ref{sec:modeldescription}, we describe the full viscoelastic model and its viscous limiting case; \REV{in particular, we state in Section \ref{sec:finalform} the general form of the equations that we then proceed to investigate.} Section \ref{sec:viscouslimit} is devoted to the local well-posedness (up to maximal times of existence) of solutions of the viscous model under different regularity assumptions, \REV{with the main results summarized in Theorems \ref{th:wellposed}, \ref{th:smoothwellposed} and \ref{th:bvwellposed}.} In Section \ref{sec:viscoelasticmodel}, we study the local well-posedness of the viscoelastic model, \REV{with the main result stated in Theorem \ref{th:wellposedPara}.}

\subsection{\REV{Notation}}

\REV{We denote the set of positive integers by $\N$ and the set of nonnegative integers by $\N_0$. Further, we denote the set of positive real numbers by $\mathbb R^+$. As usual, we denote by $C^k(\mathbb R^+)$ for any $k\in \N_0$ the space  of $k$-times continuously differentiable functions on $\mathbb R^+$. For the space of $k$-times continuously differentiable functions with compact support in $\Omega$, we write $C^k_0(\Omega)$. For any Banach space $X$ with norm $\|\cdot\|_{X}$, we denote by $C([0,T];X)$ the space of continuous functions $u\colon [0,T]\to X$  with $\|u\|_{C([0,T];X)}=\max_{s\in[0,T]} \|u(t)\|_X<\infty$. Analogously, the space of $k$-times continuously differentiable functions on $[0,T]$ with values in $X$ is denoted by $C^k([0,T];X)$. 
We denote by $C^{0,\gamma}(\overline \Omega)$ the space of Hölder continuous functions with Hölder exponent $\gamma$, and write $C^{0,\gamma}_\mathrm{loc}(\Omega)$ if the Hölder property is only satisfied locally in $\Omega$. The space $C^{k,\gamma}(\overline \Omega)$ consists of all $k$-times continuously differentiable functions whose $k$th partial derivatives are Hölder continuous with exponent $\gamma$. We denote by $BV(\Omega)$ the space of functions of bounded variation and by $L^p(\Omega)$ with $1\leq p\leq \infty$ the $L^p$-spaces on $\Omega$. We write $W^{k,p}(\Omega)$ with $1\leq p\leq \infty$ and $k\in \mathbb N_0$ for the Sobolev spaces on $\Omega$. We write $W^{k,p}_0$ for the closure of $C^\infty_0(\Omega)$ in $W^{k,p}(\Omega)$, where in particular, for $p=2$, $W^{-1,2}(\Omega)$ is dual space of $W^{1,2}_0(\Omega)$. For any real Banach space $X$ with norm $\|\cdot\|_{X}$ and any $1\leq p <\infty$, we denote by $L^p(0,T;X)$ the space of measurable functions $u\colon [0,T]\to X$ with $\|u\|_{L^p(0,T;X)}=(\int_0^T \|u(t)\|_X^p \di t)^{1/p}<\infty$.}

\section{Description of the model}\label{sec:modeldescription}

\REV{In this section, we introduce the general model equations that we focus on in this work. The models \eqref{eq:cp}, \eqref{eq:cp2} and \eqref{eq:magma} that have appeared in the literature can be obtained as special or limiting cases from these equations. We describe two such simplifications, the viscous limiting case and a small-porosity approximation. In addition, we introduce a transformed version of the general model that facilitates its analysis with nonsmooth data. We then state the precise mild-weak formulation of the problem that we investigate in the remainder of this work.}

\REV{Throughout, we assume a domain $\Omega\subseteq \R^d$ with $d \in \N$ to be given.} For ease of notation, for any $T>0$, we introduce the space-time cylinder $\Omega_T=(0,T) \times \Omega$.

\subsection{\REV{General poroviscoelastic model}}

The model for poroviscoelastic flow on which we focus in this work reads
\begin{subequations}\label{eq:model}
    \begin{align}
        \label{eq:modelphieq}	\partial_t \phi&=-(1-\phi) \bl \frac{b(\phi)}{\sigma(u)}u +Q\partial_t u \br ,\\
        \partial_t u&=\frac{1}{Q} \bl \nabla \cdot a(\phi)(\nabla u + (1-\phi) f)-\frac{b(\phi)}{\sigma(u)}u \br ,
    \end{align}
\end{subequations}
with functions $a$, $b$ and $\sigma$ that are to be specified, and where $Q>0$ and $f\in \R^d$ are assumed to be given constants.
Physically meaningful solutions of this problem should satisfy $\phi \in (0,1)$ on $\Omega_T$.
The problem is supplemented with initial data
\begin{align}\label{eq:ic}
    \phi(0,x)=\phi_0(x), \quad u(0,x)=u_0(x),\quad x\in\Omega,
\end{align}
for given functions $\phi_0\colon \Omega\to (0,1)$ and $u_0\colon \Omega\to \R$. 
In addition, if $\partial \Omega \neq \emptyset$, we equip~\eqref{eq:model} with homogeneous Dirichlet boundary conditions for $u$.

The derivation of \eqref{eq:model} is sketched in Appendix \ref{sec:deriv}. The coefficient functions $a$ and $b$ of main interest are of the form
\begin{equation}\label{eq:CK}
		a(\phi) = a_0\phi^n, \qquad b(\phi) = b_0\phi^m
\end{equation}
with real constants $a_0, b_0 > 0$ and $n, m \geq 1$. This assumption on $a$ is motivated by the Carman-Kozeny relationship \cite{Costa2006} between the porosity $\phi$ and the permeability of the medium. The function $\sigma$ accounts for \emph{decompaction weakening} \cite{Raess2018,Raess2019} and $\sigma/\phi^m$ can be regarded as the effective viscosity. 

\begin{ass}\label{ass:sigma}
	We assume that $\sigma\in C^1(\R)$ satisfies
	\begin{align*}
		\sup_{v \in \R} \sigma(v) < \infty,\quad
		\inf_{v\in \R} \sigma(v) > 0, \quad
		\sigma' \geq 0 \text{ on $\R$},
	\end{align*}
	as well as
	\begin{align*}
		\inf_{v\in \R} \left\{\frac{1}{\sigma(v)}-\frac{v\sigma'(v)}{\sigma^2(v)}\right\}>0,\quad
		c_L=\sup_{v\in \R} \left\{\frac{1}{\sigma(v)}-\frac{v\sigma'(v)}{\sigma^2(v)}\right\}<+\infty.
	\end{align*}
\end{ass}

A trivial example for $\sigma$ in Assumptions~\ref{ass:sigma} is given by $\sigma(v) = c_0$ for all $v\in\R$ with a constant $c_0>0$, proposed in \cite{Vasilyev1998}. Another example for $\sigma$, suggested in \cite{Raess2018,Raess2019} and satisfying Assumptions~\ref{ass:sigma}, is an expression of the form
\begin{align}\label{eq:sigma}
    \sigma(v)=c_0 \bl 1 - c_1 \bl 1 + \tanh \bl -\frac{v}{c_2} \br \br \br ,\quad v\in \R,
\end{align}
which provides a phenomenological model for decompaction weakening.
Here $c_0>0$ is a positive constant, $c_1\in [0,\frac12)$ and $c_2>0$, where $1 + \tanh$ can be regarded as a smooth approximation of a step function taking values in the interval $(0,2)$. 
In most the well-studied case $c_1=0$, as considered in \cite{Vasilyev1998}, one observes the formation of porosity waves, whereas $c_1>0$ with appropriate problem parameters and initial conditions can lead to the formation of channels.

With parameters from the stated ranges, $\sigma$ as in \eqref{eq:sigma} satisfies Assumptions~\ref{ass:sigma}: clearly, $\sigma$ is bounded from below by $c_0(1-2c_1)$ and from above by $c_0$, and is non-decreasing with $\sigma'(v)= \tfrac{c_0 c_1}{c_2} (1-\tanh^2(-v/c_2))$, $\lim_{v\to \pm \infty} v\sigma'(v)=0$ and $\sigma(v)>v\sigma'(v)$ for all $v\in \mathbb{R}$.

In what follows, it will be convenient to write
\begin{equation}\label{eq:kappa}
  \kappa(v) = \frac{v}{\sigma(v)}\,.
\end{equation}
Note that $\kappa$ is Lipschitz continuous with Lipschitz constant $c_L$.

Starting from \eqref{eq:model}, we next consider the viscous limiting case, small-porosity approximations and reformulations that can become necessary for initial data of low regularity.

\subsection{Purely viscous model}
	
The purely viscous case corresponds to the limiting case $Q = 0$ in~\eqref{eq:model}.
Since the dynamics of~\eqref{eq:model} in the cases of interest are typically dominated by the viscous behaviour, this case can give some first insights. For $Q=0$, the equations \eqref{eq:model} become
\begin{subequations}\label{eq:viscousmodel}
    \begin{align}
        \partial_t \phi&=-(1-\phi){b(\phi)}{\kappa(u)},\label{eq:viscousphi}\\
        0&=\nabla \cdot a(\phi) (\nabla u + (1-\phi)f)-{b(\phi)}{\kappa(u)},
    \end{align}
\end{subequations}
again with Dirichlet boundary conditions for $u$.
In this case, we have an initial condition on $\phi$,
\[
  \phi(0,x) = \phi_0(x), \quad x \in \Omega ,
\]
but no initial condition on $u$.

\subsection{Small-porosity approximations}
For initial data with $\phi_0(x)\in(0,1]$ for $x \in \Omega$ and bounded $u$, for a classical solution to~\eqref{eq:model} one has $\phi\leq 1$ due to the presence of the factor $1-\phi$ in~\eqref{eq:modelphieq}. Replacing the factor $1-\phi$ in~\eqref{eq:model} by 1, which is a commonly used approximation for small porosities, we obtain the modified model 
\begin{subequations}\label{eq:modelmod}
    \begin{align}
        \partial_t \phi&=- \bigl( {b(\phi)}{\kappa(u)} +Q\partial_t u \bigr) ,\label{eq:modelmodphi}\\
        \partial_t u&=\frac{1}{Q} \bigl( \nabla \cdot a(\phi)(\nabla u +f)-{b(\phi)}{\kappa(u)} \bigr) \label{eq:modelmodu}.
    \end{align}
\end{subequations}
We again equip~\eqref{eq:modelmod} with Dirichlet boundary conditions for $u$ and supplement it with initial data
\eqref{eq:ic}.
The same approximation can be made in the viscous model~\eqref{eq:viscousmodel}, resulting in
\begin{subequations}\label{eq:viscousmodified}
    \begin{align}
        \partial_t \phi&=-{b(\phi)}{\kappa(u)},\\
        0&=\nabla \cdot a(\phi) (\nabla u+f)-{b(\phi)}{\kappa(u)}\label{eq:viscouselliptic}.
    \end{align}
\end{subequations}
For small $\phi$, it is typically assumed that the qualitative behaviour of solutions to~\eqref{eq:modelmod} and~\eqref{eq:viscousmodified} are similar to the ones of the original models~\eqref{eq:model} and~\eqref{eq:viscousmodel}, respectively.

\subsection{\REV{Log-transformed reformulation of the general case}}\label{sec:reformulations}
Let us note first that for the full coupled problem~\eqref{eq:model} with nonsmooth initial porosity $\phi_0$, the interpretation of the first equation~\eqref{eq:modelphieq} is not obvious, since it contains a term of the form $(1-\phi)\partial_t u$. When $\phi$ has jump discontinuities in the spatial variables, $\partial_t u$ in general only exists in the distributional sense, that is, as an element of $L^2(0,T; W^{-1,2}(\Omega))$. In this case, the product of the distribution $\partial_t u$ and  $1-\phi$, which is  not weakly differentiable, may not be defined.

However, the original problem~\eqref{eq:model} including the factor $1-\phi$ can be reduced to a similar form as~\eqref{eq:modelmod} by the following observation: ~\eqref{eq:modelphieq} can formally be rewritten as 
\begin{align*}
    \partial_t \log (1 - \phi) = {b(\phi)}{\kappa(u)} +Q\partial_t u .
\end{align*}
Introducing the new variable $\lambda = - \log(1-\phi)$, so that $\phi = 1 - e^{-\lambda}$, the system~\eqref{eq:model} can be written in the form
\begin{subequations}\label{eq:logtransformed}
    \begin{align}
        \partial_t \lambda &= - \bl {b(1 - e^{-\lambda})}{\kappa(u)} +Q\partial_t u \br , \\
        \partial_t u &= \frac{1}{Q} \bl \nabla \cdot a(1-e^{-\lambda})(\nabla u + e^{-\lambda} f)-{b(1 - e^{-\lambda})}{\kappa(u)} \br ,
    \end{align}
\end{subequations}
which has the same structure as~\eqref{eq:modelmod}. Physically meaningful solutions with $0<\phi <1$ are obtained precisely when $\lambda > 0$. As we shall see, the reformulation~\eqref{eq:logtransformed} is also advantageous for obtaining a weak formulation, and we will thus consider~\eqref{eq:model} in this form.

\subsection{\REV{Mild and weak formulations}}\label{sec:finalform}

\REV{We next introduce the basic notions of solutions for the different formulations of the problem that we consider in the following sections. The viscoelastic models~\eqref{eq:modelmod} and~\eqref{eq:logtransformed} are both} of the general form
\begin{subequations}\label{eq:generalstructure}
    \begin{align}
        \partial_t \varphi &= -{\beta(\varphi)}{\kappa(u)} - Q \partial_t u ,\label{eq:generalstructure_a}\\
        \partial_t u &= \frac{1}{Q} \bl \nabla \cdot \alpha(\varphi) (\nabla u + \zeta(\varphi))-{\beta(\varphi)}{\kappa(u)} \br ,\label{eq:generalstructure_b}
    \end{align}
\end{subequations}
where $\alpha, \beta$ and $\zeta$ are given locally Lipschitz continuous functions, \REV{with initial conditions $\varphi(0, \cdot) = \varphi_0$ and $u(0,\cdot) = u_0$ in $\Omega$.} 
Note that since $\varphi$ in \eqref{eq:generalstructure} is in general bounded from above and below, on this range the functions $\alpha, \beta$ and $\zeta$ satisfy a uniform Lipschitz condition. 

\REV{To give a meaning to these equations for data of low regularity (in particular, when only $\varphi_0 \in L^\infty(\Omega)$ is assumed), 
we write \eqref{eq:generalstructure_a} in integral form and consider \eqref{eq:generalstructure_b} in weak formulation. This leads us to
\begin{subequations}\label{eq:fullmildweak}
\begin{align}
    \varphi(t,\cdot) &= \varphi_0 + Qu_0 - Qu(t,\cdot) - \int_0^t \beta(\varphi) \, \kappa(u) \D s , &  & t \in [0,T], \label{eq:phi}  \\
    \partial_t u &= \frac{1}{Q} \bl \nabla \cdot \alpha(\varphi) (\nabla u + \zeta(\varphi))-\beta(\varphi) \, \kappa(u) \br & & \text{in $W^{-1,2}(\Omega)$ for a.e.\ $t \in [0,T]$,} \label{eq:u}
\end{align}
\end{subequations}
subject to Dirichlet boundary conditions for $u$ and initial data $u(0,\cdot)=u_0$ in $\Omega$ for some given $u_0 \in L^2(\Omega)$. In addition to Assumptions \ref{ass:sigma} on $\sigma$ (and hence, in view of \eqref{eq:kappa} on $\kappa$), we make the following assumptions on $\alpha$, $\beta$ and $\zeta$.}

\begin{ass}\label{ass:alphabeta}
	We assume that $\alpha, \beta, \zeta \in C^{0,1}_\mathrm{loc}(\R^+)$
	and that $\alpha$ is strictly positive on $\R^+$; in other words, for each $\delta > 0$ there exists an $\epsilon > 0$ such that for all $x \in [\delta,\infty)$ we have $\alpha(x) \geq \epsilon > 0$.
	Furthermore we assume that $\beta(x) \geq 0$ for each $x \in \R^+$.
\end{ass}

Similarly \REV{to \eqref{eq:generalstructure}}, the viscous models \eqref{eq:viscousmodel} or~\eqref{eq:viscousmodified} can be written as 
\begin{subequations}\label{eq:generalstructureviscous}
    \begin{align}
        \partial_t \varphi &= -{\beta(\varphi)}{\kappa(u)} , \label{eq:generalstructureviscous1}\\
        0 &= \nabla \cdot \alpha(\varphi) (\nabla u + \zeta(\varphi))-{\beta(\varphi)}{\kappa(u)}.\label{eq:generalstructureviscous2}
    \end{align}
\end{subequations}
\REV{We again write \eqref{eq:generalstructureviscous1} in integral form and consider \eqref{eq:generalstructureviscous2} in weak formulation. This results in the viscous limit model in mild formulation for \eqref{eq:generalstructureviscous1} and weak formulation for \eqref{eq:generalstructureviscous2},
\begin{subequations}\label{eq:generalviscousmildweak}
\begin{align}
	\varphi(t,\cdot) &= \varphi_0 - \int_0^t \beta(\varphi)\kappa(u) \D s, & & \text{for $t \in [0,T]$,} \label{eq:generalviscousmild} \\
	0&=\nabla \cdot \alpha(\varphi) (\nabla u + \zeta(\varphi))-\beta(\varphi)\kappa(u) & & \text{in $W^{-1,2}(\Omega)$}, \label{eq:generalviscousweak}
\end{align}
\end{subequations}
where we assume that $\sigma$ satisfies Assumptions~\ref{ass:sigma} and $\alpha$, $\beta$ and $\zeta$ satisfy Assumptions~\ref{ass:alphabeta}.}

\begin{rem}
	Note that our existence and uniqueness results on the viscous model \eqref{eq:generalstructureviscous} and the fully viscoelastic model \eqref{eq:generalstructure} in Sections~\ref{sec:viscouslimit} and~\ref{sec:viscoelasticmodel}, respectively, also hold under weaker assumptions on $f$. In general, $f$ can be a vector-valued function in $L^\infty(\Omega)$, and for some of our results, even $f\in L^p(\Omega)$ with some $p<\infty$ is sufficient. Only for the results in Section~\ref{sec:higherreg} and the main results of Section~\ref{sec:parabolic}, we need higher regularity assumptions on $f$, still without requiring $f$ to be constant.
\end{rem}

\section{Analysis of the viscous limiting case}\label{sec:viscouslimit}

In this section, we investigate the existence and uniqueness of solutions to the viscous limit model \REV{\eqref{eq:generalviscousmildweak}}.
The proof is based on a reformulation as a fixed point problem for $\varphi$ similarly to the one considered in \cite{Ambrose2018,Simpson2006}, where the main difficulties in the present case result from our substantially lower regularity assumptions on the problem data.
As a first step, we consider the elliptic problem \eqref{eq:generalviscousweak} for fixed $\varphi$.

\subsection{Elliptic problem}

Let $T>0$ be given. As a first step, for any $t\in [0,T]$, we show the existence of a unique weak solution $u$ of the elliptic equation~\eqref{eq:generalviscousweak}, that is,
\begin{align}\label{eq:viscousellipticproof}
    \nabla \cdot \alpha(\tilde\varphi) (\nabla u + \zeta(\tilde\varphi))-\beta(\tilde\varphi) \kappa(u) = 0
\end{align}
with given $\tilde\varphi \in L^\infty(\Omega)$ such that $\inf_\Omega \tilde\varphi > 0$, for any space dimension $d\geq 1$.
	
\begin{thm}\label{th:existence_uniqueness_elliptic}
	Let $\Omega\subset \R^d$ be a bounded domain. Let $t\in [0,T]$ be given and suppose that $\tilde\varphi \in L^\infty(\Omega)$ 
	satisfies $\inf_\Omega \tilde\varphi >0$. 
	Then~\eqref{eq:viscousellipticproof} with homogeneous Dirichlet boundary conditions has a unique weak solution in $W^{1,2}_0( \Omega)$ satisfying 
	\begin{align}\label{eq:elliptic_W12bound}
	    \| u\|_{W^{1,2}(\Omega)}\leq C \Norm[L^{2}(\Omega)]{\alpha(\tilde\varphi) \, \zeta(\tilde\varphi)}
	\end{align}
	for a constant $C>0$ depending on $\Omega$, $\sigma$ and on the lower bound of $\tilde\varphi$, but independent of $u$.
\end{thm}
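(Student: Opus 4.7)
The plan is to solve the elliptic equation \eqref{eq:viscousellipticproof} via the theory of monotone operators. First I write the equation in weak form: find $u\in W^{1,2}_0(\Omega)$ such that
\begin{equation*}
\int_\Omega \alpha(\tilde\varphi)\,\nabla u\cdot\nabla v\D x + \int_\Omega \beta(\tilde\varphi)\,\kappa(u)\,v\D x = -\int_\Omega \alpha(\tilde\varphi)\,\zeta(\tilde\varphi)\cdot\nabla v\D x
\end{equation*}
for every $v\in W^{1,2}_0(\Omega)$. Because $\tilde\varphi\in L^\infty(\Omega)$ with $\inf_\Omega\tilde\varphi\geq \delta>0$, Assumptions~\ref{ass:alphabeta} ensure that $\alpha(\tilde\varphi)$ is bounded between two positive constants $\epsilon$ and $M$ almost everywhere, and that $\beta(\tilde\varphi)\in L^\infty(\Omega)$ with $\beta(\tilde\varphi)\geq 0$. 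The right-hand side is a bounded linear functional on $W^{1,2}_0(\Omega)$ with norm controlled by $\|\alpha(\tilde\varphi)\zeta(\tilde\varphi)\|_{L^2(\Omega)}$.

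I then define $A\colon W^{1,2}_0(\Omega)\to W^{-1,2}(\Omega)$ by the left-hand side and verify the hypotheses of the Browder--Minty theorem. For \emph{monotonicity}, testing $Au_1-Au_2$ with $u_1-u_2$ gives
\begin{equation*}
\langle Au_1-Au_2,u_1-u_2\rangle = \int_\Omega \alpha(\tilde\varphi)\,|\nabla(u_1-u_2)|^2\D x + \int_\Omega \beta(\tilde\varphi)\,(\kappa(u_1)-\kappa(u_2))(u_1-u_2)\D x,
\end{equation*}
and both terms are nonnegative: the first because $\alpha(\tilde\varphi)\geq\epsilon$, and the second because $\beta\geq 0$ and Assumptions~\ref{ass:sigma} force $\kappa'\geq\inf_v\{1/\sigma(v)-v\sigma'(v)/\sigma^2(v)\}>0$, so $\kappa$ is strictly increasing. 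For \emph{coercivity}, the choice $v=u$ together with $\kappa(0)=0$ and the monotonicity of $\kappa$ yields $\kappa(u)u\geq 0$, so $\langle Au,u\rangle\geq\epsilon\|\nabla u\|_{L^2(\Omega)}^2$, which combined with Poincar\'e's inequality gives coercivity on $W^{1,2}_0(\Omega)$. \emph{Hemicontinuity} follows from the global Lipschitz continuity of $\kappa$ (constant $c_L$) and dominated convergence. Browder--Minty then delivers a solution $u\in W^{1,2}_0(\Omega)$.

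For the a priori estimate \eqref{eq:elliptic_W12bound}, I test the weak equation with $v=u$ itself, drop the nonnegative $\beta(\tilde\varphi)\kappa(u)u$ term on the left, and estimate the right-hand side by Cauchy--Schwarz:
\begin{equation*}
\epsilon\,\|\nabla u\|_{L^2(\Omega)}^2 \leq \|\alpha(\tilde\varphi)\zeta(\tilde\varphi)\|_{L^2(\Omega)}\,\|\nabla u\|_{L^2(\Omega)},
\end{equation*}
which after division and an application of Poincar\'e gives the desired bound with a constant depending only on $\Omega$, $\sigma$ and the lower bound on $\tilde\varphi$ (through $\epsilon$). \emph{Uniqueness} follows from strict monotonicity: if $u_1,u_2$ are two solutions, then $\langle Au_1-Au_2,u_1-u_2\rangle=0$ forces $\nabla(u_1-u_2)=0$ a.e., and the homogeneous Dirichlet condition gives $u_1=u_2$.

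I do not anticipate any serious obstacle here, as the problem is a textbook monotone elliptic equation once one observes that Assumptions~\ref{ass:sigma} make $\kappa$ strictly increasing with $\kappa(0)=0$. The only point requiring mild care is tracking how the constant $\epsilon$ enters the final estimate through $\inf_\Omega\tilde\varphi$; this is what produces the stated dependence of $C$ on the lower bound of $\tilde\varphi$.
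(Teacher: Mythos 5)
Your proof is correct, but it takes a different (though closely parallel) route from the paper. The paper proceeds variationally: it introduces the potential $V(u)=\int_0^u\kappa(s)\,\mathrm{d}s$ and the energy $\mathcal J(u)=\tfrac12\int_\Omega\alpha(\tilde\varphi)|\nabla u|^2+\int_\Omega\beta(\tilde\varphi)V(u)+\int_\Omega gu$, shows that $\mathcal J$ is strictly convex (using $V''=\kappa'>0$ from Assumptions~\ref{ass:sigma}), coercive, G\^ateaux differentiable and hence weakly lower semicontinuous, and obtains the unique weak solution as the unique minimiser via a direct-method theorem. You instead apply Browder--Minty to the operator $A$, which is precisely the G\^ateaux derivative of $\mathcal J$ shifted by the source term; your monotonicity of $A$ is the paper's convexity of $\mathcal J$, and your coercivity and a priori estimate (testing with $u$, dropping the nonnegative $\beta(\tilde\varphi)\kappa(u)u$ term, Cauchy--Schwarz, Poincar\'e) coincide with the paper's final display. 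The monotone-operator route is marginally more general in that it never uses that the nonlinearity $\kappa$ is a gradient, only that it is increasing and Lipschitz with $\kappa(0)=0$; the variational route gets uniqueness for free from strict convexity and avoids checking hemicontinuity. One minor point you should make explicit: the version of Browder--Minty you invoke needs $A$ to be bounded (or demicontinuous), which here follows immediately from $|\kappa(u)|\le c_L|u|$ and the $L^\infty$ bounds on $\alpha(\tilde\varphi)$, $\beta(\tilde\varphi)$. With that remark added, the argument is complete.
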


\begin{proof}
To prove the existence of a unique weak solution of~\eqref{eq:viscousellipticproof}, we consider the functional $\mathcal J\colon W^{1,2}_0(\Omega)\to \R$ defined by
\begin{align*}
	\mathcal J(u)=\frac{1}{2}\int_\Omega    \alpha(\tilde\varphi) \nabla u \cdot \nabla u\di x+\int_\Omega \beta(\tilde\varphi)V(u)\di x+\int_\Omega gu \di x.
\end{align*} 
Here, $g=-\nabla \cdot \alpha(\tilde\varphi) \, \zeta(\tilde\varphi) \in W^{-1,2}(\Omega)$ since $\alpha(\tilde\varphi) \, \zeta(\tilde\varphi) \in L^\infty(\Omega) \subseteq L^2(\Omega)$. We set 
\begin{align*}
	V(u)=\int_0^u \kappa(s)\di s
\end{align*}
for $\kappa$ from~\eqref{eq:kappa}.
\REV{By Assumptions \ref{ass:sigma},
\begin{align*}
	V''(u)=\kappa'(u)=\frac{1}{\sigma(u)}-\frac{u\sigma'(u)}{\sigma^2(u)}>0 ,
\end{align*}
and hence $V$ and $\mathcal{J}$ are strictly convex.} 

\REV{Since $\mathcal{J}$ is easily seen to be G\^ateaux differentiable by Lipschitz continuity of $\kappa$, a minimiser $u$ of $\mathcal J$ is a weak solution of~\eqref{eq:viscousellipticproof}, that is,
\begin{align*}
	\int_\Omega g\psi \di x=-\int_\Omega \nabla\psi \cdot \alpha(\tilde\varphi) \nabla u\di x-\int_\Omega \beta(\tilde\varphi) \kappa(u) \psi\di x
\end{align*}
for all $\psi\in W^{1,2}_0(\Omega)$. Since $\mathcal{J}$ is convex and G\^ateaux differentiable, it is also weakly lower semicontinuous (see \cite[Cor.~4.3]{Roubicek}).}

\REV{Moreover, $\mathcal J$ is coercive, that is, $\mathcal{J}(u)/\Norm[W^{1,2}_0]{u} \to \infty$ as $\Norm[W^{1,2}_0]{u}\to \infty$. To show this, we first note that $V(s)\geq 0$ for all $s\in \R$ and that $\tilde\varphi$ is bounded by a positive constant from below. This implies that there exist a constant $C_1>0$ such that $\mathcal J(u)\geq C_1\| \nabla u \|_{L^2(\Omega)}^2+\mathcal F(u)$,
where 
\begin{align*}
	\mathcal F(u)
	= \int_\Omega gu \di x=\int_\Omega\nabla u \cdot \alpha(\tilde\varphi) \, \zeta(\tilde\varphi)\di x\geq -\Norm[W^{1,2}(\Omega)]{u} \Norm[L^{2}(\Omega)]{\alpha(\tilde\varphi) \, \zeta(\tilde\varphi)} .
\end{align*}
By the Poincaré inequality, there exists a constant $C>0$ such that
\begin{align*}
	\mathcal J(u)\geq C \| u \|_{W^{1,2}(\Omega)}^2-\| u\|_{W^{1,2}(\Omega)} \|\alpha(\tilde\varphi) \, \zeta(\tilde\varphi)\|_{L^{2}(\Omega)} ,
\end{align*} 
and hence $\mathcal J$ is coercive.}

\REV{Since $\mathcal{J}$ is thus weakly lower semicontinuous, coercive, strictly convex and G\^ateaux differentiable, by \cite[Thm.~4.2]{Roubicek} there exists a unique $u \in W^{1,2}_0(\Omega)$ solving \eqref{eq:viscousellipticproof}.}
That the weak solution $u\in W^{1,2}_0(\Omega)$ of~\eqref{eq:viscousellipticproof} satisfies~\eqref{eq:elliptic_W12bound} follows from
\begin{align*}
	&\|u\|_{W^{1,2}(\Omega)} \|\alpha(\tilde\varphi) \, \zeta(\tilde\varphi)\|_{L^{2}(\Omega)} \\&\geq -\int_\Omega\nabla u \cdot \alpha(\tilde\varphi) \, \zeta(\tilde\varphi) \di x\\&=-\int_\Omega gu\di x=\int_\Omega \nabla u \cdot \alpha(\tilde\varphi) \nabla u\di x+\int_\Omega \frac{\beta(\tilde\varphi)}{\REV{\sigma( u)}} u^2\di x\geq C \| u\|_{W^{1,2}(\Omega)}^2,
\end{align*}
where $C>0$ depends on $\sigma$ and the lower bound of $\tilde\varphi$.
\end{proof}
 
Since we already know by Theorem~\ref{th:existence_uniqueness_elliptic} that there exists a solution to the nonlinear elliptic problem \eqref{eq:viscousellipticproof}, we consider $\sigma(u) \in L^\infty(\Omega)$ as being part of the coefficient and apply elliptic regularity results to \eqref{eq:viscousellipticproof}.

\subsubsection{Elliptic regularity theory}
We next recall two additional regularity results for elliptic problems that we apply both to \eqref{eq:viscousellipticproof} and to its linearisation with respect to $\varphi$.
We thus state these results for general linear elliptic equations 
\begin{align}\label{eq:ellgeneral}
	-\nabla \cdot (a \nabla u) + b u
	= 	\nabla \cdot f + g,
\end{align}
on a bounded domain $\Omega \subset \R^d$ with homogeneous Dirichlet boundary conditions,
where $a, b \in L^\infty(\Omega)$ and $\inf_\Omega a>0$.
Using \cite[Thm.~8.15]{Gilbarg2001}, we obtain uniform boundedness of the solution to \eqref{eq:ellgeneral} for $f$ and $g$ of suitable integrability.
\begin{thm}\label{th:unifbounded}
    Let $\Omega\subset \R^d$ be a bounded domain, $q>\max\{d,2\}$, $f \in L^q(\Omega,\R^d)$ and $g \in L^{q/2}(\Omega)$. Then the weak solution $u \in W^{1,2}_0(\Omega)$ to \eqref{eq:ellgeneral} satisfies
    \begin{align*}
        \Norm[L^\infty(\Omega)]{u}
        \leq C \bl \Norm[L^2(\Omega)]{u} + \REV{\Norm[L^q(\Omega,\R^d)]{f}} + \Norm[L^{q/2}(\Omega)]{g} \br,
    \end{align*}
    where $C>0$ depends on $d$, $q$, $\operatorname{meas}(\Omega)$, the upper bounds of $a, b$ and the lower bound of $a$.
\end{thm}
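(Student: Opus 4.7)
The proof is a direct application of the classical $L^\infty$ estimate for weak solutions of linear elliptic equations in divergence form, namely \cite[Thm.~8.15]{Gilbarg2001}. First I would rewrite \eqref{eq:ellgeneral} in the divergence form $-D_i(a\,\delta_{ij} D_j u) + b u = D_i f^i + g$ used there, identifying the principal part $a^{ij} = a\,\delta_{ij}$ and the zeroth-order coefficient $b$. Uniform ellipticity holds with bounds depending only on $\|a\|_{L^\infty(\Omega)}$ and $\inf_\Omega a > 0$, and $b \in L^\infty(\Omega)$ is already given; the data satisfy $f \in L^q(\Omega;\R^d)$ and $g \in L^{q/2}(\Omega)$ with $q > d$ (the assumption $q > \max\{d,2\}$ in particular implies $q > d$), which is precisely the integrability threshold at which the cited theorem produces a finite $L^\infty$ bound.

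Applying \cite[Thm.~8.15]{Gilbarg2001} to $u$ and to $-u$ (equivalently, invoking its two-sided form for weak solutions, rather than just weak subsolutions) then directly yields
\[
\|u\|_{L^\infty(\Omega)} \leq C \bigl( \|u\|_{L^2(\Omega)} + \|f\|_{L^q(\Omega;\R^d)} + \|g\|_{L^{q/2}(\Omega)} \bigr),
\]
with a constant $C$ that, by inspecting the Moser iteration underlying the cited theorem, depends only on $d$, $q$, $\operatorname{meas}(\Omega)$, the upper bounds of $a$ and $b$, and the lower bound of $a$. This is exactly the claimed estimate.

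The main obstacle is in fact only bookkeeping: matching the functional framework of \cite[\S8]{Gilbarg2001} to our formulation and tracking the dependence of constants. The one small point worth flagging is that $b$ is not assumed to have any particular sign, but \cite[Thm.~8.15]{Gilbarg2001} accommodates bounded zeroth-order coefficients of arbitrary sign, with $b$ entering the constant only through $\|b\|_{L^\infty(\Omega)}$. Consequently, there is no need to absorb the $bu$ term into the right-hand side, and the stated form of the estimate follows immediately.
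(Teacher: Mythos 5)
Your proof takes exactly the paper's route: the paper offers no argument beyond the sentence preceding the theorem, which simply invokes \cite[Thm.~8.15]{Gilbarg2001}, and your application of that result to $u$ and $-u$ with the stated identification of coefficients and data is the intended proof. One caveat: Theorem 8.15 of Gilbarg--Trudinger assumes their structure condition (8.6), which for the operator $-\nabla\cdot(a\nabla u)+bu$ amounts to $b\geq 0$ rather than ``zeroth-order coefficients of arbitrary sign'' as you claim; this imprecision is shared with the paper (whose statement likewise omits a sign condition on $b$, though its constant tellingly depends only on the \emph{upper} bound of $b$), and it is harmless in all of the paper's applications, where the zeroth-order coefficient is nonnegative by Assumptions \ref{ass:sigma} and \ref{ass:alphabeta}.
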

 
Under similar integrability assumptions on $f$ and $g$, we obtain higher integrability of the solution to \eqref{eq:ellgeneral} by the following result of Meyers in \cite[Thm.~1]{Meyers1963}. The assumptions on $\partial\Omega$ suffice as a consequence of \cite[Thm.~1]{Auscher:02}.
\begin{thm}\label{th:meyerhigherint}
     Let $\Omega\subset \R^d$ be a bounded domain with $C^1$-boundary.
     Then for the weak solution $u \in W^{1,2}_0(\Omega)$ of \eqref{eq:ellgeneral}, we have $u \in W^{1,p}(\Omega)$ for some $p>2$ if $f \in L^p(\Omega,\R^d)$ and $g \in L^{r}(\Omega)$ with $r = \max\{2, (dp)/(d+p)\}$, as well as the estimate
     \begin{align*}
     	\Norm[W^{1,p}(\Omega)]{u}
     	\leq C \bl \Norm[L^r(\Omega)]{u} + \Norm[L^p(\Omega)]{f} + \Norm[L^{r}(\Omega)]{g} \br.
     \end{align*}
\end{thm}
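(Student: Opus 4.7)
The plan is to adopt the classical perturbation argument of Meyers, reducing matters to sharp $L^p$-estimates for the Dirichlet Laplacian on $\Omega$. Existence and uniqueness of a weak solution $u \in W^{1,2}_0(\Omega)$ under the hypotheses already follow from Lax--Milgram, together with the baseline estimate $\Norm[W^{1,2}(\Omega)]{u} \leq C\bl \Norm[L^2(\Omega)]{f} + \Norm[L^{2d/(d+2)}(\Omega)]{g} \br$; the substance of the theorem is the upgrade from $W^{1,2}$ to $W^{1,p}$ for some $p > 2$.

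The central ingredient is the following $L^p$-theory for the Dirichlet Laplacian on $\Omega$: for each $p \in (1,\infty)$ there exists $C_p > 0$ such that any $v \in W^{1,p}_0(\Omega)$ solving $-\Delta v = \nabla \cdot h + k$ satisfies
\begin{align*}
    \Norm[W^{1,p}(\Omega)]{v} \leq C_p \bl \Norm[L^p(\Omega,\R^d)]{h} + \Norm[L^r(\Omega)]{k} \br,
\end{align*}
where $r = \max\{2, dp/(d+p)\}$ is the smallest exponent making the embedding $L^r \hookrightarrow W^{-1,p}(\Omega)$ hold; moreover $C_2 = 1$ and the map $p \mapsto C_p$ is continuous at $p=2$ by Riesz--Thorin interpolation. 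I would then rewrite~\eqref{eq:ellgeneral} as
\begin{align*}
    -\Delta u = \nabla \cdot \bl \tfrac{\lambda - a}{\lambda}\, \nabla u + \tfrac{f}{\lambda} \br + \tfrac{g - b u}{\lambda}
\end{align*}
with the constant $\lambda = \tfrac{1}{2}\bl \inf_\Omega a + \sup_\Omega a \br$, so that $\theta := \Norm[L^\infty(\Omega)]{1 - a/\lambda} = (\sup_\Omega a - \inf_\Omega a)/(\sup_\Omega a + \inf_\Omega a) < 1$. Viewing this as a fixed-point equation $u = T(u)$ on $W^{1,p}_0(\Omega)$, the map $T$ has Lipschitz constant at most $C_p \theta$ on the leading term, which remains strictly below $1$ for $p$ in a small one-sided neighbourhood of $2$. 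The unique fixed point must then coincide with the $W^{1,2}_0$-solution by $L^2$-uniqueness, and the contraction estimate delivers the claimed $W^{1,p}$ bound.

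The lower-order contributions are absorbed by Sobolev embedding: since $r \leq p$ and $L^r \hookrightarrow W^{-1,p}(\Omega)$, one estimates $\Norm[L^r(\Omega)]{bu} \leq \Norm[L^\infty(\Omega)]{b}\, \Norm[L^r(\Omega)]{u}$ and then uses the interpolation $\Norm[L^r(\Omega)]{u} \leq \delta \Norm[W^{1,p}(\Omega)]{u} + C_\delta \Norm[L^2(\Omega)]{u}$ for a small $\delta$, with the $L^2$-piece already controlled by the baseline estimate. The main obstacle lies in the $C_p \to 1$ behaviour of the Calder\'on--Zygmund constant for the \emph{Dirichlet} Laplacian on a bounded domain: on $\R^d$ this is classical Riesz transform theory, but on $\Omega$ one needs a functional calculus for $-\Delta$ on $L^p(\Omega)$ that behaves continuously in $p$ across $p = 2$, and it is precisely here that the regularity of $\partial\Omega$ enters. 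This is exactly what~\cite[Thm.~1]{Auscher:02} provides already for Lipschitz domains, so the $C^1$-boundary hypothesis here is comfortably sufficient.
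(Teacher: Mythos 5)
The paper itself does not prove this statement: it is used as a citation of Meyers \cite[Thm.~1]{Meyers1963}, with \cite[Thm.~1]{Auscher:02} invoked only to guarantee that the boundary regularity suffices for the underlying $L^p$-theory of the Dirichlet Laplacian. Your proposal reconstructs exactly that classical perturbation argument, and the skeleton is right: the decomposition $a = \lambda\bigl(1-(1-a/\lambda)\bigr)$ with $\theta<1$, the norm-one property of $h\mapsto\nabla(-\Delta)^{-1}\nabla\cdot h$ on $L^2$, the continuity $C_p\to 1$ via interpolation against a fixed $p_0>2$ supplied by the boundary regularity, and the identification of the $W^{1,p}_0$ fixed point with the $W^{1,2}_0$ solution are all the standard ingredients.

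There is, however, one genuine flaw in the way you close the argument: the zeroth-order term. As you set up $T$, the map $w\mapsto (-\Delta)^{-1}\bigl(-bw/\lambda\bigr)$ contributes to the Lipschitz constant of $T$ on $W^{1,p}_0$ an amount of order $C_p\,\lambda^{-1}\Norm[L^\infty(\Omega)]{b}$ times the embedding constant of $W^{1,p}_0\hookrightarrow L^{r}$, and this quantity does \emph{not} shrink as $p\downarrow 2$; for large $\Norm[L^\infty(\Omega)]{b}$ your $T$ is simply not a contraction. The Ehrling-type interpolation $\Norm[L^r(\Omega)]{u}\leq \delta\Norm[W^{1,p}(\Omega)]{u}+C_\delta\Norm[L^2(\Omega)]{u}$ cannot repair this: it is useful for an a priori estimate once $u\in W^{1,p}$ is already known, but when applied to a difference $w_1-w_2$ of iterates the residual term $C_\delta\Norm[L^2(\Omega)]{w_1-w_2}$ is not controlled by anything, so it does not restore the contraction property. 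The correct (and standard) fix is signposted by the statement itself, whose right-hand side carries $\Norm[L^r(\Omega)]{u}$: do not iterate on the zeroth-order term at all. Freeze the known $W^{1,2}_0$-solution $u$, set $\tilde g = g - bu\in L^r(\Omega)$ (note $r=2$ for $p$ close to $2$, and $u\in L^2(\Omega)$), and run the Meyers contraction for the pure divergence-form problem $-\nabla\cdot(a\nabla w)=\nabla\cdot f+\tilde g$, whose affine fixed-point map has Lipschitz constant exactly $C_p\theta<1$; uniqueness of the $W^{1,2}_0$-solution of that coercive problem identifies the fixed point with $u$ and yields the stated bound with $\Norm[L^{r}(\Omega)]{bu}\leq\Norm[L^\infty(\Omega)]{b}\Norm[L^r(\Omega)]{u}$ absorbed into the right-hand side. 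With this modification your argument is complete and coincides with the proof behind the paper's citation.
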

Note that we can always choose $p > 2$ sufficiently small such that $r = 2$.

\subsubsection{Lipschitz estimate with respect to porosity}

Applying Theorem~\ref{th:unifbounded} and Theorem~\ref{th:meyerhigherint} to~\eqref{eq:viscousellipticproof}, we obtain $u \in L^\infty(\Omega) \cap W^{1,p}(\Omega)$ for some $p > 2$ provided that $\zeta(\tilde\varphi) \in L^q(\Omega)$ for $q > \max\{2,d\}$.
Next, we show that the solution $u(\tilde\varphi)\in W^{1,2}_0(\Omega)$ to \eqref{eq:viscousellipticproof} is Lipschitz continuous with respect to $\tilde\varphi\in L^\infty(\Omega)$ for $d\leq 2$. We will use this Lipschitz estimate to study the coupled problem~\eqref{eq:generalviscousmildweak}.

\begin{cor}\label{th:lipschitzestimatelinf}
    Let $d \leq 2$ and suppose that $\Omega\subset \R^d$ is a bounded domain with $C^{1}$-boundary. Let \REV{$R>\epsilon >0$ and $\tilde\varphi_1, \tilde\varphi_2\in L^{\infty}( \Omega)$ with
    %\epsilon >0$, $R \in \R$
%    $\min\{ \inf_\Omega\tilde\varphi_1,\inf_\Omega\tilde\varphi_2\} > 0$,
%	\REV{$\min\{ \inf_\Omega\tilde\varphi_1,\inf_\Omega\tilde\varphi_2\} \geq \epsilon > 0$ and $\max\{ \Norm[L^\infty(\Omega)]{\tilde\varphi_1}, \Norm[L^\infty(\Omega)]{\tilde\varphi_2}\} \leq R < \infty$,}
	$\inf_\Omega\tilde\varphi_i \geq \epsilon > 0$ and $\Norm[L^\infty(\Omega)]{\tilde\varphi_i} \leq R$ for $i = 1,2$,}
    let $\alpha, \beta, \zeta \in C^1(\R^+)$ and let $\sigma \in C^{1,1}(\R)$. \REV{Let $u(\tilde\varphi_1)$ and $u(\tilde\varphi_2)$ be the unique weak solutions to~\eqref{eq:viscousellipticproof} in $W^{1,2}_0(\Omega)$ corresponding to $\tilde\varphi_1$ and $\tilde\varphi_2$, respectively.} Then
    \begin{align}\label{eq:lipschitzhoelder}
        \Norm[L^\infty(\Omega)]{u(\tilde\varphi_1)-u(\tilde\varphi_2)} \leq C \Norm[L^\infty(\Omega)]{\tilde\varphi_1-\tilde\varphi_2}
    \end{align}
	and
    \begin{align}\label{eq:lipschitzsigma}
    	\Norm[L^\infty(\Omega)]{\kappa(u(\tilde\varphi_1)) - \kappa(u(\tilde\varphi_2))} 
    	\leq C \Norm[L^\infty(\Omega)]{\tilde\varphi_1-\tilde\varphi_2}
    \end{align}
    with constants $C>0$ \REV{that may depend on $\epsilon$ and $R$, but not on $\tilde\varphi_1$ and $\tilde\varphi_2$.}
\end{cor}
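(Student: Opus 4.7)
The plan is to linearise \eqref{eq:viscousellipticproof} in the porosity and apply the $L^\infty$ elliptic regularity result of Theorem \ref{th:unifbounded} to the difference $w=u(\tilde\varphi_1)-u(\tilde\varphi_2)$. Writing $u_i = u(\tilde\varphi_i)$ and subtracting the two equations satisfied by $u_1$ and $u_2$, I would use the identities
\[
  \alpha(\tilde\varphi_1)\nabla u_1-\alpha(\tilde\varphi_2)\nabla u_2=\alpha(\tilde\varphi_1)\nabla w+(\alpha(\tilde\varphi_1)-\alpha(\tilde\varphi_2))\nabla u_2
\]
and $\kappa(u_1)-\kappa(u_2)=k(u_1,u_2)\,w$ with the nonnegative factor $k(u_1,u_2)=\int_0^1\kappa'(su_1+(1-s)u_2)\,ds$, to put the equation for $w$ into the divergence form $-\nabla\cdot(a\nabla w)+bw=\nabla\cdot F+G$ with $a=\alpha(\tilde\varphi_1)$, $b=\beta(\tilde\varphi_1)k(u_1,u_2)\ge 0$, and
\[
  F=-\bl\alpha(\tilde\varphi_1)-\alpha(\tilde\varphi_2)\br\nabla u_2-\bl\alpha(\tilde\varphi_1)\zeta(\tilde\varphi_1)-\alpha(\tilde\varphi_2)\zeta(\tilde\varphi_2)\br,\qquad G=-\bl\beta(\tilde\varphi_1)-\beta(\tilde\varphi_2)\br\kappa(u_2).
\]

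Next, I would control $F$ and $G$ by $\Norm[L^\infty(\Omega)]{\tilde\varphi_1-\tilde\varphi_2}$. Since $\tilde\varphi_i\in[\epsilon,R]$, the $C^1$-functions $\alpha,\beta,\zeta$ are uniformly Lipschitz on this range, so the coefficient differences are bounded in $L^\infty(\Omega)$ by a constant multiple of $\Norm[L^\infty(\Omega)]{\tilde\varphi_1-\tilde\varphi_2}$. To handle the factor $\nabla u_2$, I would apply Meyers' Theorem \ref{th:meyerhigherint} to $u_2$ (treating $\sigma(u_2)^{-1}$ as part of the zeroth-order coefficient) to obtain $\nabla u_2\in L^p(\Omega;\R^d)$ for some $p>2$, with a bound depending only on $\epsilon$, $R$, $\sigma$ and $\Omega$. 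Likewise, Theorem \ref{th:unifbounded} applied to $u_2$ yields $\kappa(u_2)\in L^\infty(\Omega)$ with a similar uniform bound. Thus
\[
  \Norm[L^p(\Omega)]{F}+\Norm[L^\infty(\Omega)]{G}\le C\Norm[L^\infty(\Omega)]{\tilde\varphi_1-\tilde\varphi_2}.
\]
An energy estimate obtained by testing the equation for $w$ against $w$ itself, using $b\ge 0$, $a\ge\epsilon_0>0$ and the Poincar\'e inequality, additionally yields $\Norm[W^{1,2}(\Omega)]{w}\le C\Norm[L^\infty(\Omega)]{\tilde\varphi_1-\tilde\varphi_2}$.

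Because $d\le 2<p$, I can then invoke Theorem \ref{th:unifbounded} with $q=p$ to bound $\Norm[L^\infty(\Omega)]{w}$ in terms of $\Norm[L^2(\Omega)]{w}+\Norm[L^p(\Omega)]{F}+\Norm[L^{p/2}(\Omega)]{G}$, which combined with the previous estimates yields \eqref{eq:lipschitzhoelder}. The second estimate \eqref{eq:lipschitzsigma} then follows directly from the Lipschitz property of $\kappa$ (Assumptions \ref{ass:sigma}) with constant $c_L$.

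The main obstacle is that $F$ is only guaranteed by the basic energy estimate to lie in $L^2(\Omega;\R^d)$, which is insufficient for the pointwise bound of Theorem \ref{th:unifbounded} in dimension $d=2$. Overcoming this requires the Meyers higher integrability $\nabla u_2\in L^p$ with $p>2$, and hence the $C^1$ regularity of $\partial\Omega$. Since Meyers generally only provides an exponent $p$ that may be arbitrarily close to $2$, the restriction $d\le 2$ in the hypothesis is essential for this argument.
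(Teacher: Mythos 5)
Your argument is correct, but it takes a genuinely different route from the paper. The paper proves the Lipschitz estimate by first establishing that $\tilde\varphi \mapsto u(\tilde\varphi)$ is Fr\'echet differentiable from $L^\infty(\Omega)$ to $L^\infty(\Omega)$ (via the linearized equation \eqref{eq:elliptic_linear_wh} and the difference equations \eqref{eq:elldiff}--\eqref{eq:ellderiv}, together with the continuity of $\Delta_{\kappa,\uzero}$ at $\uzero$), showing the derivative is uniformly bounded, and then invoking the fundamental theorem of calculus for the Bochner integral as in \eqref{eq:integralrepell}. You instead subtract the two equations for $u(\tilde\varphi_1)$ and $u(\tilde\varphi_2)$ directly, which is essentially the paper's \eqref{eq:elldiff} applied with a finite increment $\tilde\varphi_2-\tilde\varphi_1$ rather than $\dphi h$, and then apply the same regularity machinery (Theorem \ref{th:meyerhigherint} for $\nabla u_2 \in L^p$, $p>2$, the energy estimate for the $L^2$-bound on $w$, and Theorem \ref{th:unifbounded} with $q=p$ for the pointwise bound). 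The key structural points are handled correctly: the secant slope $k(u_1,u_2)$ is nonnegative and bounded by $c_L$ thanks to Assumptions \ref{ass:sigma}, the coefficient differences are controlled by the uniform Lipschitz constants of $\alpha,\beta,\zeta$ on $[\epsilon,R]$, and you correctly identify that the restriction $d\le 2$ enters only through the need for a Meyers exponent exceeding $\max\{d,2\}$. Your route is shorter and avoids the uniform-in-$h$ convergence of difference quotients; what the paper's heavier differentiability argument buys is reusability, since the bounded derivative $u'(\tilde\varphi)$ and the integral representation \eqref{eq:integralrepell} are needed again in Proposition \ref{prp:LipschitzEll} and Lemma \ref{lem:LipschitzParaPiecw}, where the composition $\kappa(u(\cdot))$ must be differentiated in stronger norms and a plain Lipschitz argument no longer suffices.
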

\begin{proof}
As a first step, we show that the solution
$u \in L^\infty(\Omega)$ (by Theorem~\ref{th:unifbounded})
of \eqref{eq:viscousellipticproof} is Fr\'echet differentiable with respect to $\tilde\varphi \in L^\infty(\Omega)$ with $\inf_\Omega\tilde\varphi>0$.
For each $h \in L^\infty(\Omega)$, the candidate for the directional derivative $w_h = u'(\tilde\varphi)h \in W_0^{1,2}(\Omega)$ is given implicitly by the weak formulation of the equation
\begin{multline}\label{eq:elliptic_linear_wh}
	\nabla \cdot ( \alpha(\tilde\varphi) \nabla w_h) - \beta(\tilde\varphi) \, \kappa'(u(\tilde\varphi)) \, w_h\\
	= -\nabla \cdot \bl \alpha'(\tilde\varphi) h (\nabla u(\tilde\varphi)+\zeta(\tilde\varphi)) + \alpha(\tilde\varphi) \zeta'(\tilde\varphi) h \br + \beta'(\tilde\varphi) \, h \, \kappa(u(\tilde\varphi)).
\end{multline}
One obtains~\eqref{eq:elliptic_linear_wh} by formally differentiating 
\begin{align*}
    \frac{\di}{\di \dphi} \Bigl( L_{\tilde\varphi + \dphi h} (u(\tilde\varphi + \dphi h)) - g(\tilde\varphi + \dphi h) \Bigr)\bigg|_{\dphi=0} = 0, 
\end{align*}
where 
\begin{align*}
	L_{\tilde\varphi}(u(\tilde\varphi))
	&= \nabla \cdot \alpha(\tilde\varphi) \nabla u(\tilde\varphi)-\beta(\tilde\varphi) \kappa(u(\tilde\varphi)), \quad g(\tilde\varphi)=-\nabla \cdot \alpha(\tilde\varphi) \zeta(\tilde\varphi).
\end{align*}

We need to show that $w_h$ solving \eqref{eq:elliptic_linear_wh} is indeed a Fr\'echet derivative, that is, with $\udphi := u(\tilde\varphi + \dphi h)$ we have
\begin{align*}
	\lim_{\dphi \to 0} \Norm[L^\infty(\Omega)]{\frac{\udphi - \uzero}{\dphi} - w_h} = 0
\end{align*}
where the convergence is uniform in $h$ with $\Norm[L^\infty]{h}=1$, and $u'(\tilde\varphi)$ given by $w_h = u'(\tilde\varphi) h$ is a bounded operator on $L^\infty$.
Considering \eqref{eq:viscousellipticproof} for $\uzero$ and $\udphi$ yields the difference equation
\begin{multline}\label{eq:elldiff}
	\nabla \cdot (\alpha(\tilde\varphi) \nabla(\udphi - \uzero)) - \beta(\tilde\varphi) \Delta_{\kappa,\uzero}(\udphi) (\udphi - \uzero)\\
	\begin{split}
		=& - \nabla \cdot ((\alpha(\tilde\varphi + \dphi h) - \alpha(\tilde\varphi)) \nabla \udphi) + \kappa(\udphi) (\beta(\tilde\varphi + \dphi h) - \beta(\tilde\varphi))\\
		&- \nabla \cdot (\alpha(\tilde\varphi + \dphi h) \zeta(\tilde\varphi + \dphi h) - \alpha(\tilde\varphi) \zeta(\tilde\varphi)).
	\end{split}
\end{multline}
Here we define
\begin{align}\label{eq:Delta}
	\Delta_{\kappa,y}(x) :=
	\begin{cases}
		\frac{\kappa(x)-\kappa(y)}{x-y} &\text{if } x \neq y,\\
		\kappa'(y) &\text{else,}
	\end{cases}
\end{align}
which is used pointwise in~\eqref{eq:elldiff} and~\eqref{eq:ellderiv}.
We have  $\kappa(\udphi)-\kappa(\uzero)=\Delta_{\kappa,\uzero}(\udphi) (\udphi-\uzero)$ where $\Delta_{\kappa,\uzero}$ is continuous at $\uzero$ and $\Delta_{\kappa,\uzero}(\uzero)=\kappa'(\uzero)$.  

By combining \eqref{eq:elldiff} with \eqref{eq:elliptic_linear_wh}, we obtain 
\begin{multline}\label{eq:ellderiv}
	\nabla \cdot \bl \alpha(\tilde\varphi) \, \nabla\!\bl \frac{\udphi - \uzero}{\dphi} - w_h \br \br - \beta(\tilde\varphi) \kappa'(\uzero) \bl \frac{\udphi - \uzero}{\dphi} - w_h \br\\
	\begin{split}
		=& - \nabla \cdot \bl \bl \frac{\alpha(\tilde\varphi + \dphi h) - \alpha(\tilde\varphi)}{\dphi} - \alpha'(\tilde\varphi) h \br \nabla \udphi \br - \nabla \cdot (\alpha'(\tilde\varphi) h (\nabla \udphi - \nabla \uzero))\\
		&+ \bl \frac{\beta(\tilde\varphi + \dphi h) - \beta(\tilde\varphi)}{\dphi} - \beta'(\tilde\varphi) h \br \kappa(\udphi) + \beta'(\tilde\varphi) h (\kappa(\udphi) - \kappa(\uzero))\\
		&- \nabla \cdot \bl \frac{\alpha(\tilde\varphi + \dphi h) \zeta(\tilde\varphi + \dphi h) - \alpha(\tilde\varphi) \zeta(\tilde\varphi)}{\dphi} - \alpha'(\tilde\varphi) \zeta(\tilde\varphi) h - \alpha(\tilde\varphi) \zeta'(\tilde\varphi) h \br \\
		&+ \beta(\tilde\varphi) \frac{\udphi - \uzero}{\dphi} (\Delta_{\kappa,\uzero}(\udphi) - \kappa'(\uzero)).
	\end{split}
\end{multline}
In the case $d=1$, we apply Theorem~\ref{th:existence_uniqueness_elliptic} to~\eqref{eq:elldiff} to obtain
$\Norm[W^{1,2}(\Omega)]{\udphi - \uzero} \leq C \dphi$, which implies $U_s\to U_0$ in $L^\infty(\Omega)$ uniformly in $h$ with $\Norm[L^\infty]{h}=1$.
Applying Theorem~\ref{th:existence_uniqueness_elliptic} to~\eqref{eq:ellderiv} and using the properties of $\Delta_{\kappa,\uzero}$ at $\uzero$ yield the desired result.
If $d=2$, we apply Theorem~\ref{th:unifbounded} and Theorem~\ref{th:meyerhigherint} to~\eqref{eq:elldiff} implying that
$\Norm[L^\infty(\Omega)]{\udphi - \uzero} \leq C \dphi$ and $\udphi \to \uzero$ in $W^{1,p}(\Omega)$.
Applying Theorem~\ref{th:unifbounded} to~\eqref{eq:ellderiv} yields the desired result also in this case with convergence uniform in $h$ with $\Norm[L^\infty]{h}=1$.
Note that all the arguments here make use of the Lipschitz continuity of $\alpha, \beta, \zeta, \kappa$ and $\kappa'$.

To show that $u'(\tilde\varphi)$ is indeed a linear bounded operator on $L^\infty$, we again distinguish the cases $d=1$ and $d=2$. 
For $d=1$, the solution $w_h$ of the elliptic problem \eqref{eq:elliptic_linear_wh} satisfies  $\| w_h\|_{W^{1,2}(\Omega)}\leq C\Norm[L^\infty(\Omega)]{h}$ for some constant $C>0$ independent of $h$ and together with the embedding $\REV{W^{1,2}(\Omega)\subset L^\infty(\Omega) }$, an upper bound of $\Norm[L^\infty\to L^\infty]{u'(\tilde\varphi)}$ immediately follows.

To obtain such a bound for $d=2$, we apply Theorem~\ref{th:meyerhigherint} to obtain $\nabla u \in L^p(\Omega)$ for some $p>2 = d$. This allows us to apply Theorem~\ref{th:unifbounded}, which requires only $\nabla u \in L^p(\Omega)$ for some $p>d$, to the linearized problem \eqref{eq:elliptic_linear_wh}, and we obtain
\begin{align*}
	\Norm[L^\infty(\Omega)]{w_h}
	&\leq C_1 \Norm[L^\infty(\Omega)]{h} \bl \Norm[L^p(\Omega)]{\alpha'(\tilde\varphi) (\nabla u(\tilde\varphi) + \zeta(\tilde\varphi)) + \alpha(\tilde\varphi) \zeta'(\tilde\varphi)} + \Norm[L^p(\Omega)]{\beta'(\tilde\varphi) \kappa(u(\tilde\varphi)} \br\\
	&\leq C_2 \Norm[L^\infty(\Omega)]{h}
\end{align*}
with a constant $C_2>0$ independent of $h$, which shows $\Norm[L^\infty\to L^\infty]{u'(\tilde\varphi)} \leq C_2$.

We now show the Lipschitz estimates \eqref{eq:lipschitzhoelder} and \eqref{eq:lipschitzsigma}.
For $\tilde\varphi_1,\tilde\varphi_2\in L^\infty(\Omega)$, we set
\begin{align*}
    \theta(s) := u((1-s)\tilde\varphi_1 + s\tilde \varphi_2),
\end{align*}
and one can easily verify that its Fréchet derivative satisfies
\begin{align*}
    \theta' (s) &= u'((1-s)\tilde\varphi_1 +s\tilde\varphi_2) (\tilde\varphi_2 -\tilde \varphi_1).
\end{align*}
This allows us to use the fundamental theorem of calculus for the associated Bochner integral
\cite[Proposition A.2.3]{Liu2015} and
we obtain the Bochner integral representation 
\begin{align}\label{eq:integralrepell}
    u(\tilde\varphi_2) - u(\tilde\varphi_1) = \theta(1) - \theta(0) 
    = \int_0^1 \theta'(s)\,\di s = \int_0^1 u'((1-s)\tilde\varphi_1 +s\tilde\varphi_2) (\tilde\varphi_2 - \tilde\varphi_1)\,\di s.
\end{align}
This leads to~\eqref{eq:lipschitzhoelder} by taking $L^\infty$-norms on both sides. To show~\eqref{eq:lipschitzsigma}, we note the Lipschitz continuity of
$\kappa$ by Assumption~\ref{ass:sigma}, which immediately implies
\begin{align*}
    \Norm[L^\infty(\Omega)]{\kappa(u(\tilde\varphi_1)) - \kappa(u(\tilde\varphi_2))}
    \leq c_L \Norm[L^\infty(\Omega)]{u(\tilde\varphi_1)-u(\tilde\varphi_2)}
\end{align*}
and together with~\eqref{eq:lipschitzhoelder} yields~\eqref{eq:lipschitzsigma}.
\end{proof}

\subsection{Coupled problem with nonsmooth data}\label{sec:coupledviscous}

In this section, assuming $d\leq 2$, we investigate existence and uniqueness of the viscous limiting problem~\eqref{eq:generalviscousmildweak}. 
We consider the problem in a fixed point formulation in terms of $\varphi$ as in \cite{Simpson2006}, where this was used under substantially stronger regularity requirements.
 
Provided that $\varphi(t,\cdot)\in L^\infty(\Omega)$ is uniformly positive, the existence and uniqueness of a weak solution $u(\varphi(t,\cdot))\in W^{1,2}(\Omega)$ is guaranteed by Theorem~\ref{th:existence_uniqueness_elliptic}, and a Lipschitz estimate for solutions $u(\varphi(t,\cdot))\in W^{1,2}(\Omega)$ for such $\varphi(t,\cdot)\in L^\infty(\Omega)$ is satisfied by Corollary~\ref{th:lipschitzestimatelinf}.
For $T>0$ and $R>\epsilon>0$, let
\begin{equation}\label{eq:STdef}
\begin{aligned}
    \CHI_T=\Bigl\{ \varphi\in C([0,T]; L^\infty( \Omega))\colon &\sup_{t\in[0,T]} \|\varphi(t,\cdot) \|_{L^\infty( \Omega)}\leq R, \\  &\inf_{t\in[0,T]} \varphi(t,x) \geq \epsilon \text{ for a.e.~} x\in \Omega \Bigr\}.
\end{aligned}
\end{equation}
We define $\mathcal{E}$ as the nonlinear operator that maps $\varphi(t,\cdot)$ \REV{to $u(\varphi(t,\cdot))$, that is,}
\begin{equation}\label{eq:Edef}
 \REV{ \mathcal{E} \colon L^\infty(\Omega) \to W^{1,2}(\Omega), \; \tilde\varphi \mapsto u(\tilde\varphi)\,, }
\end{equation}
\REV{where $u(\tilde\varphi)$ is the solution of \eqref{eq:viscousellipticproof}.}
Moreover, for all $t\in [0,T]$, we define
\begin{equation}\label{eq:lambdaoperatorLinfty}
    \Lambda[\varphi](t,\cdot)
    = \varphi_0 - \int_{0}^{t} \beta(\varphi(s,\cdot)) \, \kappa(\mathcal{E}(\varphi(s,\cdot))) \D s.
\end{equation}
 Note that for $\varphi\in \CHI_T$, we only require continuity with respect to the time variable, but $\Lambda[\varphi]$ is continuously differentiable by construction. The operator $\Lambda$ satisfies a contraction property on $\CHI_T$.

 \begin{prp}\label{prp:localtimeexistence}
    Let $d \leq 2$ and $R>\epsilon>0$. Let $\Omega\subset \R^d$ be a bounded domain with $C^1$-boundary. \REV{In addition to Assumptions \ref{ass:sigma} and \ref{ass:alphabeta},} let $\alpha, \beta, \zeta \in C^1(\R^+)$. %
    Assume that $\varphi_0\in L^\infty(\Omega)$ satisfies
    $\|\varphi_0 \|_{L^\infty( \Omega)}< R$ and $ \inf_\Omega \varphi_0 > \epsilon$. Then the following statements hold.
    \begin{enumerate}
        \item[{\rm(i)}] There exists a $T_1 = T_1(R, \epsilon, d, \varphi_0)$ such that $\Lambda[\varphi] \in \CHI_T$ for all $T \in(0, T_1)$ and $\varphi \in \CHI_T$ where $u(t,\cdot)=\mathcal{E}(\varphi(t,\cdot))\in W^{1,2}(\Omega)$ for all $t\in [0,T]$ \REV{with $\mathcal{E}$ as in \eqref{eq:Edef}.}
        \item[{\rm(ii)}] There exists $T_2 \in (0,T_1]$, $T_2 = T_2(R, \epsilon, d, \varphi_0)$ such that $\Lambda$ defines a contraction on $\CHI_T$ for $T \in (0, T_2)$ with respect to the norm of $C([0,T]; L^\infty(\Omega))$.
    \end{enumerate}
\end{prp}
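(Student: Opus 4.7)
The plan is to exploit the fact that $\Lambda[\varphi]$ is given by an explicit time integral of a bounded quantity, so both statements reduce to controlling $\|\beta(\varphi)\kappa(\mathcal{E}(\varphi))\|_{L^\infty(\Omega)}$ and its Lipschitz dependence on $\varphi$. Throughout we use that $\varphi \in \CHI_T$ takes values in the compact interval $[\epsilon,R]$ on which $\alpha,\beta,\zeta,\kappa$ are all Lipschitz with constants depending only on $\epsilon,R$. Moreover, for such $\varphi(t,\cdot)$, Theorem~\ref{th:existence_uniqueness_elliptic} yields $u(t,\cdot) = \mathcal{E}(\varphi(t,\cdot)) \in W^{1,2}_0(\Omega)$, and Theorem~\ref{th:unifbounded} (applied as in the proof of Corollary~\ref{th:lipschitzestimatelinf}, possibly after invoking Theorem~\ref{th:meyerhigherint} in the case $d=2$ to gain $W^{1,p}$-integrability of $\nabla u$) gives a uniform bound $\|\mathcal{E}(\varphi(t,\cdot))\|_{L^\infty(\Omega)} \leq M$ where $M = M(\epsilon,R,d,\Omega)$.

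For part (i), I first observe that the integrand $s \mapsto \beta(\varphi(s,\cdot))\kappa(\mathcal{E}(\varphi(s,\cdot)))$ is uniformly bounded in $L^\infty(\Omega)$ by a constant $K=K(\epsilon,R,d,\Omega)$, since $\beta$ is bounded on $[\epsilon,R]$ and $|\kappa(v)| \leq c_L |v|$ by Assumptions~\ref{ass:sigma}. Continuity of $t \mapsto \Lambda[\varphi](t,\cdot)$ in $L^\infty(\Omega)$ follows directly from this bound. Using $\|\varphi_0\|_{L^\infty(\Omega)} < R$ and $\inf_\Omega \varphi_0 > \epsilon$, pick $\eta > 0$ such that $\|\varphi_0\|_{L^\infty(\Omega)} + \eta \leq R$ and $\inf_\Omega \varphi_0 - \eta \geq \epsilon$, and set $T_1 := \eta/K$. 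Then the pointwise estimates
\begin{equation*}
    \varphi_0(x) - tK \leq \Lambda[\varphi](t,x) \leq \varphi_0(x) + tK
\end{equation*}
guarantee $\Lambda[\varphi] \in \CHI_T$ for all $T\in(0,T_1)$.

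For part (ii), I split the difference in the standard way:
\begin{equation*}
    \beta(\varphi_1)\kappa(u_1) - \beta(\varphi_2)\kappa(u_2) = \bigl(\beta(\varphi_1)-\beta(\varphi_2)\bigr)\kappa(u_1) + \beta(\varphi_2)\bigl(\kappa(u_1)-\kappa(u_2)\bigr),
\end{equation*}
with $u_i = \mathcal{E}(\varphi_i)$. On the first summand, the Lipschitz bound on $\beta$ over $[\epsilon,R]$ together with the uniform bound on $\kappa(u_1)$ yields $\leq L_1 \|\varphi_1-\varphi_2\|_{L^\infty(\Omega)}$. On the second summand, the boundedness of $\beta(\varphi_2)$ combined with the Lipschitz estimate~\eqref{eq:lipschitzsigma} of Corollary~\ref{th:lipschitzestimatelinf} gives $\leq L_2 \|\varphi_1-\varphi_2\|_{L^\infty(\Omega)}$ for $d \leq 2$. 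Integrating in time and taking $L^\infty$-norms then yields
\begin{equation*}
    \|\Lambda[\varphi_1] - \Lambda[\varphi_2]\|_{C([0,T];L^\infty(\Omega))} \leq T(L_1+L_2)\,\|\varphi_1 - \varphi_2\|_{C([0,T];L^\infty(\Omega))},
\end{equation*}
so choosing $T_2 := \min\{T_1, 1/(2(L_1+L_2))\}$ gives a contraction for $T \in (0,T_2)$.

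The main obstacle is the contraction step, since $L_2$ is not elementary: it relies crucially on the Fr\'echet-differentiability argument of Corollary~\ref{th:lipschitzestimatelinf}, which is exactly where the restriction $d\leq 2$ enters (through the Meyers regularity needed to pass $\nabla u$ into an $L^p$ space with $p>d$ so that Theorem~\ref{th:unifbounded} applies to the linearized equation). Once that Lipschitz estimate is invoked as a black box, the remainder is a routine short-time argument; care only has to be taken that the constants $K, L_1, L_2$ depend only on $\epsilon, R, d, \Omega$ and on the fixed functions $\alpha,\beta,\zeta,\sigma$, so that $T_1, T_2$ indeed depend only on the data as claimed.
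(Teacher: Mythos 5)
Your proposal is correct and follows essentially the same route as the paper: part (i) rests on the uniform $L^\infty$-bound of the integrand $\beta(\varphi)\kappa(\mathcal{E}(\varphi))$ obtained from Theorem~\ref{th:unifbounded}, and part (ii) on splitting the difference of the integrands and invoking the Lipschitz estimates of Corollary~\ref{th:lipschitzestimatelinf}, exactly as in the paper's argument. Your explicit choice of $T_1=\eta/K$ and your remark on where the restriction $d\leq 2$ enters are consistent with (and slightly more detailed than) the published proof.
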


\begin{proof}
The proof follows a similar strategy as the one of \cite[Proposition~2.9]{Simpson2006}.
To prove (i), we need to find $T_1 > 0$ such that for each fixed $T\in(0, T_1)$ we have
\begin{align}
    \Lambda[\varphi]&\in C(\REV{[0,T]}; L^\infty(\Omega)),\label{equ8b}\\
    \|\Lambda[\varphi](t,\cdot) \|_{L^\infty(\Omega)}&\leq R,\label{equ6b}\\ \Lambda[\varphi](t,\cdot) &\geq \epsilon\label{equ7b}
\end{align}
for all $t \in [0,T]$. 

Let $T>0$ be given and let $\mathcal{E}(\varphi(t,\cdot)) \in W^{1,p}(\Omega)$ for $t \in [0,T]$ be the unique solution of
\begin{align*}
	L_{\varphi(t,\cdot)}(\mathcal{E}(\varphi(t,\cdot))) = - \nabla \cdot \alpha(\varphi(t,\cdot)) \, \zeta(\varphi(t,\cdot))
\end{align*}
from Theorem~\ref{th:existence_uniqueness_elliptic} together with Theorem~\ref{th:meyerhigherint}.
Furthermore we can estimate
\begin{align*}
    \Norm[L^\infty(\Omega)]{\Lambda[\varphi](t_1,\cdot) - \Lambda[\varphi](t_2,\cdot)}
    &\leq \left| \int_{t_1}^{t_2} \Norm[L^\infty(\Omega)]{\beta(\varphi(s,\cdot))\, \kappa(\mathcal{E}(\varphi(s,\cdot)))} \D s\right|\\
    &\leq C \AV{t_2 - t_1},
\end{align*}
for some constant $C>0$ independent of $T$, where we used Theorem~\ref{th:unifbounded} as well as the uniform bounds on $\varphi$ and $\sigma$. This implies that $\Lambda[\varphi] \in W^{1,\infty}(0,T;L^\infty(\Omega))$ and hence~\eqref{equ8b}.
For showing~\eqref{equ6b}, note that 
\begin{align*}
    \|\Lambda[\varphi](t,\cdot) \|_{L^\infty(\Omega)}&\leq \|\varphi_0 \|_{L^\infty(\Omega)}+ C T
\end{align*}
using the same arguments as above.
We may choose $T>0$ sufficiently small such that~\eqref{equ6b} holds because $\|\varphi_0 \|_{L^\infty(\Omega)}< R$ by assumption.

Next we show~\eqref{equ7b}. Since $|\Lambda[\varphi](t,\cdot)-\varphi_0|\leq CT$ for all $t\in [0,T]$ we have
\begin{align*}
    \Lambda[\varphi](t,\cdot)\geq \varphi_0- CT 
\end{align*}
by the assumption on $\varphi_0$, implying that we may choose $T>0$ sufficiently small so that also~\eqref{equ7b} holds. Hence, $T_1>0$ can be chosen such that~\eqref{equ8b}--\eqref{equ7b} are satisfied, which concludes the proof of (i).

To establish (ii), let $\varphi_1,\varphi_2\in\CHI_T$. For $t\in[0,T]$, we have
\begin{align*}
	\|\Lambda[\varphi_1]( t,\cdot) - \Lambda[\varphi_2](t,\cdot)\|_{L^\infty(\Omega)}
	&\leq C \int_{0}^{t} \|\varphi_1(s,\cdot) - \varphi_2(s,\cdot)\|_{L^\infty(\Omega)} \D s\\&\leq C T\sup_{s\in[0,T]} \|\varphi_1(s,\cdot) - \varphi_2(s,\cdot)\|_{L^\infty(\Omega)}
\end{align*}
for some constant $C>0$ independent of $T$ due to the Lipschitz continuity of
\REV{$\kappa$} and $\beta$, as well as the Lipschitz continuity of $u$ by \eqref{eq:lipschitzhoelder}. This implies that
\begin{align*}
	\sup_{t\in[0,T]}\|\Lambda[\varphi_1]( t,\cdot) - \Lambda[\varphi_2](t,\cdot)\|_{L^\infty(\Omega)}
	\leq C T\sup_{s\in[0,T]} \|\varphi_1(s,\cdot) - \varphi_2(s,\cdot)\|_{L^\infty(\Omega)}.
\end{align*}
By choosing $T_2 \in (0, T_1]$ such that $C T_2 < 1$ we obtain that $\Lambda$ is a contraction on $\CHI_T$ with respect to the $C([0,T];L^\infty\REV{(\Omega)})$-norm, which completes the proof of (ii).
\end{proof}

For bounded initial data $\varphi_0$ and $d \leq 2$, the contraction property in Proposition~\ref{prp:localtimeexistence} yields the local well-posedness of the solution to \eqref{eq:generalviscousmildweak}, which can be continued to its maximal time of existence.

\begin{thm}\label{th:wellposed}
	\REV{
%	Let Assumptions \ref{ass:sigma} and \ref{ass:alphabeta} as well as the assumptions of Proposition~\ref{prp:localtimeexistence} hold.
	Let $d \leq 2$ and $R>\epsilon>0$, $\Omega\subset \R^d$ be a bounded domain with $C^1$-boundary. In addition to Assumptions \ref{ass:sigma} and \ref{ass:alphabeta}, let $\alpha, \beta, \zeta \in C^1(\R^+)$ and let  $\kappa$ be defined in terms of $\sigma$ as in \eqref{eq:kappa}. Assume that $\varphi_0\in L^\infty(\Omega)$ satisfies
	$\|\varphi_0 \|_{L^\infty( \Omega)}< R$ and $ \inf_\Omega \varphi_0 > \epsilon$.
	Let $S_T$ be defined as in \eqref{eq:STdef}, $\mathcal{E}$ as in \eqref{eq:Edef} and $\Lambda$ as in \eqref{eq:lambdaoperatorLinfty}.
    Then the following statements hold.
	\begin{enumerate}
	\item[{\rm(i)}] There exists $T>0$ depending on $R$, $\epsilon$, $d$ and $\varphi_0$ such that there exists a unique $\varphi \in \CHI_T$ solving $\varphi = \Lambda[\varphi]$. Moreover, for this $T$, $(\varphi, u)$ with $u(t,\cdot)=\mathcal{E}(\varphi(t,\cdot))\in W^{1,2}_0(\Omega)$ for $t\in[0,T]$ is the unique solution in $C([0,T]; L^\infty(\Omega))\times C([0,T]; W^{1,2}_0(\Omega))$ to
	\[
		\begin{aligned}
			\partial_t \varphi &= -{\beta(\varphi)}{\kappa(u)} ,\\
        0 &= \nabla \cdot \alpha(\varphi) (\nabla u + \zeta(\varphi))-{\beta(\varphi)}{\kappa(u)},
		\end{aligned}
	\]
	interpreted in the sense of~\eqref{eq:generalviscousmildweak}. Moreover, the solution $\varphi\in \CHI_T$ depends continuously in $C([0,T]; L^\infty(\Omega))$ on the initial data $\varphi_0$.
    \item[{\rm(ii)}] Either the solution can be extended to $(\varphi,u) \in C([0,\infty); L^\infty(\Omega))\times C([0,\infty); W^{1,2}_0(\Omega))$ or there exists a finite $T_{\mathrm{max}}>0$ such that
    \begin{align*}
        \lim_{T\to T_{\mathrm{max}}} \bl \|\varphi(T,\cdot) \|_{L^\infty( \Omega)}+\REV{\Norm[L^\infty(\Omega)]{\frac{1}{\varphi(T,\cdot)}}} \br =\infty. 
    \end{align*}
	\end{enumerate}}
\end{thm}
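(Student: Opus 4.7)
The plan is to prove part (i) by applying the Banach fixed point theorem to the operator $\Lambda$ on $\CHI_T$, using Proposition \ref{prp:localtimeexistence}, and then to obtain part (ii) by an iterative continuation argument. First I would verify that $\CHI_T$ is a closed subset of the Banach space $C([0,T]; L^\infty(\Omega))$: both the upper bound $\|\varphi(t,\cdot)\|_{L^\infty} \leq R$ and the a.e.\ lower bound $\varphi(t,\cdot) \geq \epsilon$ persist under uniform-in-$t$ convergence in $L^\infty(\Omega)$, so $\CHI_T$ is a complete metric space. Given the strict inequalities on $\varphi_0$, Proposition \ref{prp:localtimeexistence} produces a $T_2 > 0$ such that for $T \in (0, T_2)$, $\Lambda$ maps $\CHI_T$ into itself and is a contraction; Banach's theorem then yields a unique fixed point $\varphi \in \CHI_T$. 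Setting $u(t, \cdot) := \mathcal{E}(\varphi(t,\cdot))$ provides the associated elliptic solution, and continuity of $u$ in $W^{1,2}_0(\Omega)$ with respect to $t$ follows by combining the $W^{1,2}$-bound of Theorem \ref{th:existence_uniqueness_elliptic} applied to the difference of two elliptic problems with the Lipschitz dependence on $\tilde\varphi$ in $L^\infty$ from Corollary \ref{th:lipschitzestimatelinf}. By construction, $(\varphi, u)$ solves \eqref{eq:generalviscousmildweak}.

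For uniqueness in the full space $C([0,T]; L^\infty(\Omega)) \times C([0,T]; W^{1,2}_0(\Omega))$, I would note that any such solution must satisfy $u(t,\cdot) = \mathcal{E}(\varphi(t,\cdot))$ by the uniqueness statement in Theorem \ref{th:existence_uniqueness_elliptic} applied pointwise in $t$, so $\varphi$ is automatically a fixed point of $\Lambda$. A priori such a $\varphi$ might only lie in some larger $\CHI_T$ with parameters $R' \geq R$ and $\epsilon' \leq \epsilon$; by applying the contraction argument to these enlarged parameters on a (possibly smaller) subinterval and then continuing step by step, the two solutions coincide throughout $[0,T]$. For continuous dependence on $\varphi_0$, the same Lipschitz estimates as in the proof of the contraction property applied to two solutions $\varphi_1,\varphi_2$ with respective initial data $\varphi_{0,1},\varphi_{0,2}$ yield
\begin{align*}
\|\varphi_1 - \varphi_2\|_{C([0,T]; L^\infty(\Omega))} \leq \|\varphi_{0,1} - \varphi_{0,2}\|_{L^\infty(\Omega)} + C T \|\varphi_1 - \varphi_2\|_{C([0,T]; L^\infty(\Omega))},
\end{align*}
where $C$ depends only on $R,\epsilon$; taking $T$ small enough that $CT < 1$ absorbs the second term and provides the Lipschitz bound.

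For (ii), I would define $T_{\mathrm{max}}$ as the supremum of all $T > 0$ such that a solution in the sense of (i) exists on $[0,T]$. Assume $T_{\mathrm{max}} < \infty$ and suppose for contradiction that $\|\varphi(T,\cdot)\|_{L^\infty(\Omega)} + \|1/\varphi(T,\cdot)\|_{L^\infty(\Omega)}$ remains bounded as $T \to T_{\mathrm{max}}$. Then I can fix $R', \epsilon' > 0$ with $\|\varphi(T,\cdot)\|_{L^\infty} \leq R'$ and $\inf_\Omega \varphi(T,\cdot) \geq \epsilon'$ uniformly for $T$ close to $T_{\mathrm{max}}$, pick slightly enlarged $R'' > R'$ and decreased $\epsilon'' < \epsilon' > 0$, and restart the fixed point argument with $\varphi(T,\cdot)$ as the new initial datum. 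The main obstacle — and the key thing to extract from the proof of Proposition \ref{prp:localtimeexistence} — is that the resulting extension length depends on the initial datum only through the gaps $R'' - \|\varphi(T,\cdot)\|_{L^\infty}$ and $\inf_\Omega \varphi(T,\cdot) - \epsilon''$, both of which are bounded below uniformly in $T$; granting this, the extension length is at least some $\delta > 0$ uniformly, and choosing $T > T_{\mathrm{max}} - \delta$ produces a solution on $[0, T + \delta]$ with $T + \delta > T_{\mathrm{max}}$, contradicting maximality.
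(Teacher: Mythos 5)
Your proposal is correct and follows essentially the same route as the paper: Banach's fixed point theorem on $\CHI_T$ via Proposition \ref{prp:localtimeexistence}, a Lipschitz/Gr\"onwall-type estimate for continuous dependence (the paper concludes with Gr\"onwall's inequality rather than absorbing a $CT<1$ term, but the underlying integral inequality is the same), and a restart-from-$\varphi(T,\cdot)$ continuation argument for the blow-up alternative. If anything, you are more explicit than the paper on two points it glosses over, namely uniqueness of solutions that are not a priori known to lie in $\CHI_T$ and the fact that the extension length in (ii) depends on the new initial datum only through the gaps to $R$ and $\epsilon$.
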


\begin{proof}
By Proposition~\ref{prp:localtimeexistence} there exists $T>0$ such that $\Lambda[\varphi]\in \CHI_T$ is a contraction on $\CHI_T$. The contraction mapping theorem implies that there exists a unique fixed point $\varphi \in \CHI_T$, which proves the existence and uniqueness of a local solution to~\eqref{eq:generalviscousmildweak} in $\CHI_T$ \REV{and thus the first statement of (i).}

\REV{To obtain the continuous dependence on the initial data, let $\varphi_0,\psi_0\in L^\infty(\Omega)$ satisfy the conditions of Proposition~\ref{prp:localtimeexistence}.}
Then, there exists $T>0$ such that $\varphi,\psi \in \CHI_T$ satisfy~\eqref{eq:generalviscousmildweak} with initial data $\varphi_0,\psi_0$. For $t\in[0,T]$, we have
\begin{align*}
    \|\varphi( t,\cdot) - \psi(t,\cdot)\|_{L^\infty(\Omega)}
    &\leq \|\varphi_0 - \psi_0\|_{L^\infty(\Omega)}+ C \int_{0}^{t} \|\varphi(s,\cdot) - \psi(s,\cdot)\|_{L^\infty(\Omega)} \D s
\end{align*}
for some $C>0$, independent of $\varphi,\psi$,
which yields
\begin{align*}
    \|\varphi( t,\cdot) - \psi(t,\cdot)\|_{L^\infty(\Omega)}
    &\leq \|\varphi_0 - \psi_0\|_{L^\infty(\Omega)}\exp(Ct)
\end{align*}
by Gr\"onwall’s inequality.

\REV{For (ii), suppose that there exists 
%a maximal interval of existence $[0, T_\mathrm{max}]$ with $T_{\text{max}}<\infty$ 
a maximal finite $T_{\text{max}}>0$
such that
\begin{align*}
    \sup_{t\in[0, T_{\text{max}})} \bl \|\varphi(t,\cdot) \|_{L^\infty(\Omega)}+\REV{\Norm[L^\infty(\Omega)]{\frac{1}{\varphi(T,\cdot)}}} \br \leq K 
\end{align*}
for some $K>0$. Then the local existence theory for $R=K+1$ and $\epsilon=\tfrac{1}{2}K^{-1}$ yields the existence of a solution on $[0,T_{\text{max}}+\delta]$ for some $\delta>0$, contradicting the maximality of $T_{\text{max}}$.}
\end{proof}

\subsection{Higher regularity}\label{sec:higherreg}
In this section, we show higher regularity for solutions of \eqref{eq:generalviscousmildweak} under additional smoothness assumptions on the problem data, in particular $\varphi_0 \in C^{k,1}(\overline{\Omega})$ for a $k \in \N_0$.
Again, we first consider auxiliary results for a general linear elliptic equation 
\begin{align}\label{eq:ellgeneral2}
	-\nabla \cdot (a \nabla u) + b u
	= 	f,
\end{align}
on a bounded domain $\Omega \subset \R^d$ with homogeneous Dirichlet boundary conditions,
where $a \in C^{0,1}(\overline{\Omega})$, $b \in L^\infty(\Omega)$ and $\inf_\Omega a>0$.
Using \cite[Thm.~9.15,~9.17]{Gilbarg2001}, we obtain the following $W^{2,p}$-bound for the weak solution of~\eqref{eq:ellgeneral2}.

\begin{thm}\label{th:existence_p}
    Let $d\geq 1$, let $\Omega \subset\R^d$  be a bounded domain with $C^{1,1}$-boundary. Let $t\in [0,T]$ be given. Suppose that  $f \in L^p(\Omega)$ with $p \in (1,\infty)$. Then~\eqref{eq:ellgeneral2} has a unique solution  $u\in W^{2,p}( \Omega) \cap W^{1,2}_0(\Omega)$ satisfying 
    \begin{align}\label{eq:elliptic_W2pbound}
	        \Norm[W^{2,p}(\Omega)]{u}
	        \leq C \Norm[L^p(\Omega)]{f}
	    \end{align}
    for a constant $C>0$ depending on $\Omega$, but independent of $u$.
\end{thm}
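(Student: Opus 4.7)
The plan is to reduce Theorem~\ref{th:existence_p} to standard $W^{2,p}$ regularity theory for linear elliptic equations in nondivergence form, as provided by Gilbarg--Trudinger~\cite[Thm.~9.15, 9.17]{Gilbarg2001}.

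First, I would rewrite~\eqref{eq:ellgeneral2} in nondivergence form. Since $a \in C^{0,1}(\overline{\Omega})$, the gradient $\nabla a \in L^\infty(\Omega)$ exists almost everywhere, and the product rule allows us to convert the equation into
\[
-a \Delta u - \nabla a \cdot \nabla u + b u = f.
\]
The leading coefficient $a$ is continuous on $\overline{\Omega}$ and uniformly elliptic thanks to $\inf_\Omega a > 0$; the first-order coefficient $-\nabla a$ and zeroth-order coefficient $b$ both lie in $L^\infty(\Omega)$. Together with the assumption that $\partial\Omega$ is of class $C^{1,1}$, these are exactly the hypotheses needed to apply Gilbarg--Trudinger's $L^p$-theory.

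The second step is to invoke \cite[Thm.~9.15]{Gilbarg2001} to obtain a unique solution $u \in W^{2,p}(\Omega) \cap W^{1,p}_0(\Omega)$ for every $f \in L^p(\Omega)$, $p\in(1,\infty)$, and then~\cite[Thm.~9.17]{Gilbarg2001} (or the a priori estimate bundled with Theorem~9.15) to deduce the bound~\eqref{eq:elliptic_W2pbound}. To match the statement's $u \in W^{1,2}_0(\Omega)$ requirement, for $p \geq 2$ this is immediate from the embedding $W^{1,p}_0(\Omega) \hookrightarrow W^{1,2}_0(\Omega)$ on bounded domains, while for $p \in (1,2)$ it follows from Sobolev embedding $W^{2,p}(\Omega) \hookrightarrow W^{1,q}(\Omega)$ with $q = dp/(d-p)$ when $p < d$ (and continuous embedding into $C^{0,\gamma}$ if $p \geq d$), which contains $W^{1,2}$ for suitable $q$; when $p$ is very small, one can alternatively bootstrap starting from the $W^{2,p}$ solution for $p$ near $2$, since $L^p \supset L^2$ embeddings fail on bounded domains but the solution obtained for larger $p$ serves also as a solution for smaller $p$ by uniqueness.

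The main subtlety, rather than an obstacle, is verifying that the cited results apply with the sign convention used here: Gilbarg--Trudinger's existence result requires a sign condition on the zeroth-order coefficient (corresponding to $b \geq 0$ with our sign convention) or an appeal to the Fredholm alternative to rule out nontrivial solutions of the homogeneous problem. In the intended applications to~\eqref{eq:ellgeneral2} within this paper, the coefficient $b$ arises as $\beta(\varphi)\kappa'(u)$, which is nonnegative by Assumptions~\ref{ass:sigma} and~\ref{ass:alphabeta}, so this condition is automatically satisfied and the cited existence and regularity results apply directly. Beyond this sign check, the proof reduces to routine bookkeeping to identify the coefficient norms, whose dependencies enter the constant $C$ in~\eqref{eq:elliptic_W2pbound}.
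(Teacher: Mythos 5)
Your proposal takes the same route as the paper, which offers no proof beyond citing \cite[Thm.~9.15, 9.17]{Gilbarg2001}: rewriting \eqref{eq:ellgeneral2} in nondivergence form using $a \in C^{0,1}(\overline{\Omega})$ and checking the hypotheses of those theorems is exactly what is intended. Your two caveats --- the sign condition $b \geq 0$ (or a Fredholm argument) needed for unique solvability, and the fact that $W^{2,p}(\Omega) \hookrightarrow W^{1,2}(\Omega)$ can fail for $p$ close to $1$ when $d \geq 3$, which your bootstrap remark does not fully repair for general $f \in L^p(\Omega)$ --- point to imprecision in the theorem's statement rather than gaps in your argument, and both are harmless in the paper's applications, where $b = \beta(\tilde\varphi)\,\kappa'(u) \geq 0$ by Assumptions~\ref{ass:sigma} and~\ref{ass:alphabeta} and $p > d$.
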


Under appropriate smoothness assumptions, the regularity of the solution to~\eqref{eq:ellgeneral2} can be further improved.

\begin{thm}\label{th:existence_wkp}
    Let $d\geq 1$ and suppose that $\Omega \subset\R^d$ be a bounded domain with $C^{k+1,1}$-boundary for a given $k\in\N_0$. Let $t\in [0,T]$ and let $a \in C^{k,1}(\overline{\Omega})$ be bounded from below by a positive constant.
    In addition, suppose that $b \in W^{k,\infty}(\Omega)$, $f \in W^{k,p}(\Omega)$ with $p \in (1,\infty)$. Then,~\eqref{eq:ellgeneral2} with homogeneous Dirichlet boundary conditions has a unique weak solution  $u\in W^{k+2,p}( \Omega)$ satisfying 
    \begin{align}\label{eq:elliptic_Wkpbound}
		\Norm[W^{k+2,p}(\Omega)]{u}
		\leq C \Norm[W^{k,p}(\Omega)]{f}
	\end{align}
    for a constant $C>0$ depending on $\Omega$ and $k$, but independent of $u$.
\end{thm}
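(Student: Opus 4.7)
The plan is to argue by induction on $k \in \N_0$. The base case $k = 0$ is precisely the conclusion of Theorem \ref{th:existence_p}, since $a \in C^{0,1}(\overline{\Omega})$ fits its hypotheses and a $C^{1,1}$-boundary is the $k=0$ case of a $C^{k+1,1}$-boundary. At this stage we already have $u \in W^{1,2}_0(\Omega) \cap W^{2,p}(\Omega)$ with the norm bound $\Norm[W^{2,p}(\Omega)]{u} \leq C\Norm[L^p(\Omega)]{f}$.

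For the inductive step, assume the statement holds at level $k-1$ for some $k \geq 1$. Given data at level $k$, these trivially satisfy the hypotheses at level $k-1$, so the inductive hypothesis yields $u \in W^{k+1,p}(\Omega)$ together with $\Norm[W^{k+1,p}(\Omega)]{u} \leq C \Norm[W^{k-1,p}(\Omega)]{f} \leq C' \Norm[W^{k,p}(\Omega)]{f}$. I would then rewrite the equation by expanding the divergence and dividing by $a$, which is bounded below away from zero and whose reciprocal inherits the $W^{k+1,\infty}$-regularity of $a$ (using $C^{k,1}(\overline{\Omega}) \hookrightarrow W^{k+1,\infty}(\Omega)$). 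This yields the Poisson problem
\begin{equation*}
    -\Delta u = \frac{1}{a}\bigl(f - bu + \nabla a \cdot \nabla u\bigr) =: g, \qquad u \in W^{1,2}_0(\Omega).
\end{equation*}

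The central observation is that $g \in W^{k,p}(\Omega)$, verified by Leibniz-type product estimates: $f \in W^{k,p}$ directly, $bu \in W^{k,p}$ since $b \in W^{k,\infty}$ and $u \in W^{k+1,p} \hookrightarrow W^{k,p}$, and $\nabla a \cdot \nabla u \in W^{k,p}$ since $\nabla a \in (W^{k,\infty})^d$ and $\nabla u \in (W^{k,p})^d$; multiplication by $1/a \in W^{k+1,\infty}$ preserves the class. Combining these estimates with the inductive bound on $u$ gives $\Norm[W^{k,p}(\Omega)]{g} \leq C \Norm[W^{k,p}(\Omega)]{f}$. Standard higher-order regularity for the Dirichlet problem for $-\Delta$ on domains with $C^{k+1,1}$-boundary (see, e.g., \cite[Thm.~9.19]{Gilbarg2001}) then delivers $u \in W^{k+2,p}(\Omega)$ with $\Norm[W^{k+2,p}(\Omega)]{u} \leq C \Norm[W^{k,p}(\Omega)]{g} \leq C' \Norm[W^{k,p}(\Omega)]{f}$, closing the induction.

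The main technical point is careful bookkeeping of the Sobolev product estimates so that the constants at each inductive step remain controlled in terms of the original data. An alternative route — differentiating the equation to obtain an elliptic problem of the same form for $\partial_i u$ — would run into the obstacle that $\partial_i u$ does not in general satisfy homogeneous Dirichlet conditions, forcing a localisation with a partition of unity and boundary-flattening diffeomorphisms and an algebraic recovery of the normal-direction regularity from the equation. The Poisson-equation reformulation above sidesteps this entirely because $u$ itself always carries the zero Dirichlet condition.
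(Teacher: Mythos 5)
Your proof is correct and is precisely the standard bootstrap that the paper invokes without writing out (the paper's proof is omitted with a pointer to \cite[Theorems~6.3.2 and~6.3.5]{Evans2010}); your packaging — induction on $k$ with base case Theorem~\ref{th:existence_p}, followed by the reduction to $-\Delta u = g$ with $g = a^{-1}(f - bu + \nabla a\cdot\nabla u)\in W^{k,p}(\Omega)$ via Leibniz estimates and $C^{k,1}(\overline\Omega)\subset W^{k+1,\infty}(\Omega)$, $\inf_\Omega a>0$ — is a clean and valid way to carry it out, and it neatly avoids the boundary-flattening issues of the differentiate-the-equation route. The one caveat, which the paper's own citation shares, is that the reference you lean on for the final step (\cite[Thm.~9.19]{Gilbarg2001}, like Evans's Theorem~6.3.5) is stated for $C^{k+2}$-boundaries, whereas the theorem assumes only $C^{k+1,1}$; the $W^{k+2,p}$ Dirichlet regularity for the Laplacian does hold on $C^{k+1,1}$ domains, but one should cite a source stated at that regularity (e.g.\ Grisvard) rather than Gilbarg--Trudinger.
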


The proof is a standard bootstrap argument following the lines of~\cite[Theorem~6.3.2 and~6.3.5]{Evans2010} and thus omitted.

\begin{rem}\label{rem:Wkp}
	Applying Theorem~\ref{th:existence_wkp} to~\eqref{eq:generalviscousweak}, we obtain $u \in W^{k+2,p}(\Omega)$ assuming that $\varphi(t,\cdot) \in C^{k,1}(\overline{\Omega})$ is bounded from below by a positive constant, $\alpha, \zeta \in C^{k,1}_\mathrm{loc}(\R^+)$, $\beta \in C^k(\R^+)$ and $\sigma \in C^{k,1}(\R)$. 
\end{rem}

By the Sobolev embedding $W^{k+2,p}(\overline \Omega) \subset C^{k+1,\gamma}(\overline{\Omega})$ with $\gamma = 1 - \frac{d}{p}$ for $p>d$ and  Theorem \ref{th:existence_wkp}, the solution $u\in W^{k+2,p}(\Omega)$ to the original elliptic problem \eqref{eq:generalviscousweak}  satisfies a Lipschitz estimate in $C^{k,1}(\overline\Omega)$ under slightly stronger assumptions on the coefficient functions.

\begin{prp}\label{prp:LipschitzEll}
    Let \REV{$d\geq 1$,} $p \in (d,\infty)$ and let $\Omega\subset \R^d$ be a bounded domain with $C^{k+1,1}$-boundary  for a given $k\in\N_0$. Let \REV{$R>\epsilon >0$ and} $\tilde\varphi_1, \tilde\varphi_2\in C^{k,1}(\overline{\Omega})$ \REV{such that $\inf_\Omega\tilde\varphi_i \geq \epsilon > 0$ and $\Norm[C^{k,1}(\overline\Omega)]{\tilde\varphi_i} \leq R$ for $i = 1,2$.}
    %are bounded from below by a positive constant. 
    Furthermore, let $\alpha, \zeta \in C^{k+1,1}_\text{loc}(\R^+)$, $\beta \in C^{k+1}(\R^+)$ and let $\sigma$ be such that $\kappa \in C^{k+1,1}(\R)$. 
    Then the corresponding solutions $u(\tilde\varphi_1), u(\tilde\varphi_2)\in W^{k+2,p}(\Omega) \cap W^{1,2}_0(\Omega)$ to \eqref{eq:viscousellipticproof} satisfy
     \begin{align}\label{eq:lipschitzCk}
        \Norm[C^{k,1}(\overline{\Omega})]{u(\tilde\varphi_1)-u(\tilde\varphi_2)}
        \leq C_k \Norm[C^{k,1}(\overline{\Omega})]{\tilde\varphi_1-\tilde\varphi_2}
    \end{align}
    and
    \begin{align}\label{eq:Lipschitzhighordersigma}
    	\Norm[C^{k,1}(\overline{\Omega})]{\kappa(u(\tilde\varphi_1))-\kappa(u(\tilde\varphi_2))}
    	\leq C_k \Norm[C^{k,1}(\overline{\Omega})]{\tilde\varphi_1-\tilde\varphi_2}
    \end{align}
	with constants $C_k>0$ depending on $k$, \REV{$\epsilon$ and $R$, but not on $\tilde\varphi_1$ and $\tilde\varphi_2$.}

\end{prp}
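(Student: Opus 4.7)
The plan is to derive a linear elliptic equation satisfied by the difference $w := u(\tilde\varphi_1) - u(\tilde\varphi_2)$, apply the $W^{k+2,p}$-regularity result of Theorem \ref{th:existence_wkp} to this equation, and then embed into $C^{k,1}(\overline\Omega)$. Setting $u_i := u(\tilde\varphi_i)$ for $i = 1, 2$, Remark \ref{rem:Wkp} under the present smoothness hypotheses provides $u_i \in W^{k+2,p}(\Omega) \cap W^{1,2}_0(\Omega)$ with norms controlled uniformly in terms of $\epsilon$ and $R$. Subtracting the two copies of \eqref{eq:viscousellipticproof} and using the fundamental-theorem-of-calculus identities
\begin{equation*}
\alpha(\tilde\varphi_1) - \alpha(\tilde\varphi_2) = (\tilde\varphi_1 - \tilde\varphi_2) \int_0^1 \alpha'(\tau \tilde\varphi_1 + (1 - \tau)\tilde\varphi_2)\di\tau,
\end{equation*}
and analogous expressions for $\zeta$ and $\beta$, as well as for $\kappa$ along the segment from $u_2$ to $u_1$, I rewrite the equation for $w$ in the form
\begin{equation*}
-\nabla \cdot (\alpha(\tilde\varphi_1)\nabla w) + \beta(\tilde\varphi_1) \, K_{12} \, w = F,
\end{equation*}
where $K_{12} := \int_0^1 \kappa'(\tau u_1 + (1-\tau) u_2)\di\tau$ and $F$ is a finite sum of terms, each carrying at least one factor of $\tilde\varphi_1 - \tilde\varphi_2$ or $\nabla(\tilde\varphi_1 - \tilde\varphi_2)$, and involving derivatives of $u_2$ up to order two.

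I then apply Theorem \ref{th:existence_wkp} to this linear problem with $a := \alpha(\tilde\varphi_1) \in C^{k,1}(\overline\Omega)$ (bounded below thanks to $\inf \tilde\varphi_1 \geq \epsilon$ and the positivity of $\alpha$) and $b := \beta(\tilde\varphi_1) K_{12} \in W^{k,\infty}(\Omega)$, obtaining $\|w\|_{W^{k+2,p}(\Omega)} \leq C\|F\|_{W^{k,p}(\Omega)}$. The central technical step is to show $\|F\|_{W^{k,p}(\Omega)} \leq C\|\tilde\varphi_1 - \tilde\varphi_2\|_{C^{k,1}(\overline\Omega)}$: after expanding the two divergence terms by Leibniz's rule, every summand of $F$ is a product of (i) a $C^{k,1}(\overline\Omega)$-factor built from compositions of $\alpha, \alpha', \beta, \beta', \zeta, \zeta', \kappa'$ with the bounded functions $\tilde\varphi_i$ and $u_i$, (ii) a linear factor in $\tilde\varphi_1 - \tilde\varphi_2$ or its first derivative, and (iii) a factor involving at most two derivatives of $u_2$, which lies in $W^{k,p}(\Omega)$. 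The continuous multiplication $C^{k,1}(\overline\Omega) \cdot W^{k,p}(\Omega) \to W^{k,p}(\Omega)$ on bounded $\Omega$ then closes the estimate, and the Sobolev embedding $W^{k+2,p}(\Omega) \subset C^{k+1,\gamma}(\overline\Omega) \subset C^{k,1}(\overline\Omega)$ for $p > d$ and $\gamma = 1 - d/p$ yields \eqref{eq:lipschitzCk}.

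The second estimate \eqref{eq:Lipschitzhighordersigma} then follows from \eqref{eq:lipschitzCk} combined with the observation that, because $\kappa \in C^{k+1,1}(\R)$, the Nemytskii operator $v \mapsto \kappa(v)$ is locally Lipschitz continuous from $C^{k,1}(\overline\Omega)$ into itself; the uniform $C^{k,1}$-bounds on $u_1, u_2$ (obtained from their uniform $W^{k+2,p}$-bounds via the same embedding) keep us on a fixed bounded subset, so the relevant Lipschitz constant depends only on $\epsilon$, $R$ and $k$. The main obstacle I anticipate is the bookkeeping in the third step: systematically tracking all the product, composition and chain-rule contributions generated by the Leibniz expansion of $\nabla^j F$ for $|j| \leq k$, while verifying that every such term is genuinely linear in $\tilde\varphi_1 - \tilde\varphi_2$ with constants depending only on the allowed uniform data.
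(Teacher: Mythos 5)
Your argument is correct, and it takes a genuinely different route from the paper's. The paper proves the estimate by first establishing Fr\'echet differentiability of the map $\tilde\varphi \mapsto u(\tilde\varphi)$ in the $C^{k,1}$ topology -- passing through the difference and derivative equations \eqref{eq:elldiff}, \eqref{eq:ellderiv}, Theorem \ref{th:existence_wkp}, and the technical $C^k$-convergence of the difference quotient $\Delta_{\kappa,\uzero}(\udphi)\to\kappa'(\uzero)$ proved in Appendix \ref{sec:Ckconv} -- and then represents $u(\tilde\varphi_1)-u(\tilde\varphi_2)$ as a Bochner integral of $u'$ along the segment; the second estimate \eqref{eq:Lipschitzhighordersigma} is likewise obtained by differentiating $\Theta(s)=\kappa(u((1-s)\tilde\varphi_1+s\tilde\varphi_2))$ and integrating. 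You instead work directly with the ``secant'' equation satisfied by $w=u_1-u_2$, with zeroth-order coefficient $\beta(\tilde\varphi_1)K_{12}$ (nonnegative since $\beta\geq 0$ and $\kappa'>0$ by Assumptions \ref{ass:sigma} and \ref{ass:alphabeta}, so Theorem \ref{th:existence_wkp} applies and its unique solution is $w$), and reduce everything to the local Lipschitz continuity of the Nemytskii operators of $\alpha,\beta,\zeta,\kappa$ between $C^{k,1}(\overline\Omega)$ and $W^{k,p}(\Omega)$ together with the algebra/multiplier properties of these spaces. What your route buys is the complete avoidance of the Fr\'echet differentiability argument and of Appendix \ref{sec:Ckconv}; what it costs is that the Leibniz/Fa\`a di Bruno bookkeeping for $\|F\|_{W^{k,p}}$, which you correctly flag as the main burden, must be carried out in full, and you should make explicit that the constant from Theorem \ref{th:existence_wkp} depends on the coefficients $a=\alpha(\tilde\varphi_1)$ and $b=\beta(\tilde\varphi_1)K_{12}$ only through norms controlled by $\epsilon$, $R$ and $k$ (via the uniform $W^{k+2,p}$-bounds on $u_1,u_2$ from Remark \ref{rem:Wkp}), so that $C_k$ is indeed independent of $\tilde\varphi_1,\tilde\varphi_2$. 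Your treatment of \eqref{eq:Lipschitzhighordersigma} via the Lipschitz continuity of $v\mapsto\kappa(v)$ on bounded subsets of $C^{k,1}(\overline\Omega)$ is also sound and is in fact simpler than the paper's $\Theta$-based argument; note that the paper's remark that one ``cannot apply the Lipschitz estimate for $\kappa$'' refers only to the scalar Lipschitz constant $c_L$, not to the composition-operator estimate you use, which genuinely requires $\kappa\in C^{k+1,1}(\R)$ as assumed.
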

\begin{proof}
    To show \eqref{eq:lipschitzCk}, we proceed similarly to Corollary~\ref{th:lipschitzestimatelinf}. For given uniformly positive $\tilde\varphi \in C^{k,1}(\overline{\Omega})$, we consider the linearized equation for $w_h$ with $h \in C^{k,1}(\overline{\Omega})$ which reads 
	\begin{multline*}
		\nabla \cdot ( \alpha(\tilde\varphi) \nabla w_h) - \beta(\tilde\varphi) \, \kappa'(u(\tilde\varphi)) \, w_h\\
		= -\nabla \cdot \bl \alpha'(\tilde\varphi) h (\nabla u(\tilde\varphi)+\zeta(\tilde\varphi)) + \alpha(\tilde\varphi) \zeta'(\tilde\varphi) h \br + \beta'(\tilde\varphi) \, h \, \kappa(u(\tilde\varphi))
	\end{multline*}
    as before, where  $\nabla u(\tilde\varphi) \in W^{k+1,p}(\Omega)$ due to Remark~\ref{rem:Wkp}. Hence, the requirements for Theorem~\ref{th:existence_wkp} are fulfilled. Furthermore, we can use ~\eqref{eq:elldiff} and~\eqref{eq:ellderiv} with right-hand sides in $W^{k,p}(\Omega)$ together with Theorem~\ref{th:existence_wkp} and the properties of $\Delta_{\kappa,\uzero}$ to show that
	\begin{align*}
		\lim_{\dphi \to 0} \Norm[C^{k,1}(\overline{\Omega})]{\frac{\udphi - \uzero}{\dphi} - w_h} = 0,
	\end{align*}
	\REV{where $\udphi = u(\tilde\varphi + \dphi h)$}. 
	The decisive step here, estimating the norm of $W^{k,p}(\Omega)$ by the one of $C^{k}(\overline{\Omega})$, is to show that $\Norm[C^k(\overline{\Omega})]{\Delta_{\kappa,\uzero}(\udphi) - \kappa'(\uzero)}$ converges to zero if $\dphi \to 0$, which is proved in Appendix~\ref{sec:Ckconv}.
	Using the Sobolev embedding $W^{k+2,p}(\overline{\Omega}) \subset C^{k+1,\gamma}(\overline{\Omega})$ with $\gamma = 1 - \frac{d}{p}$, this yields $\Norm[C^{k+1}]{w_h} \leq C_k \Norm[C^k]{h}$ for a constant $C_k>0$ depending on $k$, \REV{ $\epsilon$ and $R$, but not on $\tilde\varphi$.}

	This justifies the use of the Bochner integral representation of $u(\tilde\varphi_1) - u(\tilde\varphi_2)$ as in~\eqref{eq:integralrepell} and we can conclude that~\eqref{eq:lipschitzCk} holds.
    Since the $C^{k,1}$-norm contains derivatives for $k > 0$, we cannot apply the Lipschitz estimate for $\kappa$
    as in Corollary~\ref{th:lipschitzestimatelinf} to show that~\eqref{eq:Lipschitzhighordersigma} holds for some constant $C_k>0$ depending on $k$.
    To show \eqref{eq:Lipschitzhighordersigma}, we introduce 
    $\Theta(s) = \kappa(u((1-s)\tilde\varphi_1 + s \tilde\varphi_2))$
    with
    \begin{align*}
        \Theta'(s)
        = \kappa'(u((1-s)\tilde\varphi_1 + s \tilde\varphi_2)) \, u'((1-s)\tilde\varphi_1 +s\tilde\varphi_2) \, (\tilde\varphi_2 - \tilde\varphi_1).
    \end{align*}
    We then obtain $\Norm[C^{k,1}]{\Theta'(s)} \leq C_k \Norm[C^{k,1}]{\tilde\varphi_1 - \tilde\varphi_2}$ since $\kappa \in C^{k+1,1}(\R)$, where $C_k$ only depends on $k$, \REV{ $\epsilon$ and $R$, but not on $\tilde\varphi_1,\tilde\varphi_2$}. Using the associated Bochner integral representation yields \eqref{eq:Lipschitzhighordersigma}.
\end{proof}

Similarly to the space $\CHI_T$ in Section~\ref{sec:coupledviscous}, for $T>0$, $R>\epsilon>0$ and $k \in \N_0$ we define
\begin{align}\label{eq:STkdef}
    \CHI_T^k = \Bigl\{ \varphi\in C([0,T]; C^{k,1}(\overline \Omega))\colon \sup_{t\in[0,T]} \|\varphi(t,\cdot) \|_{C^{k,1}(\overline \Omega)}\leq R, \quad  \inf_{t\in[0,T]} \varphi(t,x) \geq \epsilon \text{ for all $x\in \Omega$}  \Bigr\}.
\end{align}

The  local well-posedness of the solution to \eqref{eq:generalviscousmildweak} in $\CHI_T^k$ can be shown  similar to Section~\ref{sec:coupledviscous} by establishing a contraction property with respect to the norm of $C([0,T]; C^{k,1}(\overline \Omega))$ as in Proposition~\ref{prp:localtimeexistence} and proceeding analogously to Theorem~\ref{th:wellposed}.
\begin{cor}\label{th:smoothwellposed}
	Let \REV{$d \geq 1$,}  $p \in (d,\infty)$ and suppose that  $R>\epsilon>0$. Let $\Omega\subset \R^d$ be a bounded domain with $C^{k+1,1}$-boundary for a given $k\in\N_0$.  Let  $\alpha, \beta, \zeta \in C^{k+1,1}_\mathrm{loc}(\R^+)$ satisfy Assumption~\ref{ass:alphabeta}. %
	Further, let $\sigma$ as in Assumption~\ref{ass:sigma} be such that \REV{for $\kappa$ as in \eqref{eq:kappa}, we have $\kappa \in C^{k+1,1}(\R)$,} and assume that $\varphi_0\in C^{k,1}(\overline{\Omega})$ satisfies
	$\Norm[C^{k,1}(\overline{\Omega})]{\varphi_0}< R$ and $\inf_\Omega \varphi_0 > \epsilon$. \REV{Let $S_T^k$ be defined as in \eqref{eq:STkdef}, $\mathcal{E}$ as in \eqref{eq:Edef} and $\Lambda$ as in \eqref{eq:lambdaoperatorLinfty}.
} Then \REV{the following statements hold.
	\begin{enumerate}
	\item[{\rm(i)}] There exists $T>0$ depending on $R$, $\epsilon$, $k$, $d$ and $\varphi_0$ such that 
	there exists a unique $\varphi \in \CHI_T^k$ solving $\varphi = \Lambda[\varphi]$. Moreover, for this $T$, $(\varphi, u)$ with $u(t,\cdot)=\mathcal{E}(\varphi(t,\cdot))\in W^{k+2,p}(\Omega)$ for $t\in[0,T]$ is the unique solution in $C([0,T]; C^{k,1}(\overline \Omega)) \times C([0,T]; W^{k+2,p}(\Omega))$ to
	\[
		\begin{aligned}
			\partial_t \varphi &= -{\beta(\varphi)}{\kappa(u)} ,\\
        0 &= \nabla \cdot \alpha(\varphi) (\nabla u + \zeta(\varphi))-{\beta(\varphi)}{\kappa(u)},
		\end{aligned}
	\]
	interpreted in the sense of~\eqref{eq:generalviscousmildweak}. 
%	  $\Lambda[\varphi]\in \CHI_T^k$ in~\eqref{eq:lambdaoperatorLinfty} with $u(t,\cdot)=\mathcal{E}(\varphi(t,\cdot))\in W^{k+2,p}(\Omega)$ for $t\in[0,T]$	is the  unique solution to~\eqref{eq:generalviscousmildweak} in $\CHI_T^k$ when equipped with initial data $\varphi_0$ for $\varphi$. 
    Moreover, the solution $\varphi\in \CHI_T^k$ depends continuously in $C([0,T]; C^{k,1}(\overline \Omega))$ on the initial data $\varphi_0$.
	\item[{\rm(ii)}] 
	%If the maximal time of existence $T_{\mathrm{max}}>0$ satisfies $T_{\mathrm{max}}<\infty$, then
	Either the solution can be extended to $(\varphi,u) \in C([0,\infty); C^{k,1}(\overline \Omega)) \times C([0,\infty); W^{k+2,p}(\Omega))$  or there exists a finite $T_{\mathrm{max}}>0$ such that
   \begin{align*}
        \lim_{T\to T_{\mathrm{max}}} \bl \|\varphi(T,\cdot) \|_{C^{k,1}(\overline \Omega)}+\REV{\Norm[L^\infty(\Omega)]{\frac{1}{\varphi(T,\cdot)}}} \br =\infty.
    \end{align*}
    \end{enumerate}
    }
\end{cor}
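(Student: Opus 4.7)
The plan is to mirror the strategy used for Theorem~\ref{th:wellposed}, with the $L^\infty$-based framework replaced by the $C^{k,1}$-based framework provided by Proposition~\ref{prp:LipschitzEll} and Theorem~\ref{th:existence_wkp}. That is, I will set up $\Lambda$ as defined in \eqref{eq:lambdaoperatorLinfty} acting on $\CHI_T^k$, establish invariance and contraction properties with respect to the $C([0,T]; C^{k,1}(\overline{\Omega}))$-norm, apply the Banach fixed point theorem, and then argue continuous dependence and the maximal existence alternative exactly as before.

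For the invariance $\Lambda[\varphi] \in \CHI_T^k$ whenever $\varphi \in \CHI_T^k$: given $\varphi(t,\cdot) \in C^{k,1}(\overline{\Omega})$ uniformly bounded from below by $\epsilon$ and from above by $R$, Remark~\ref{rem:Wkp} together with Theorem~\ref{th:existence_wkp} yields $\mathcal{E}(\varphi(t,\cdot)) \in W^{k+2,p}(\Omega)$, and hence by the Sobolev embedding $W^{k+2,p}(\overline{\Omega}) \subset C^{k+1,\gamma}(\overline{\Omega})$ with $\gamma = 1 - d/p$ we have $\mathcal{E}(\varphi(t,\cdot)) \in C^{k,1}(\overline{\Omega})$ with a uniform bound. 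Since $\beta \in C^{k+1,1}_\mathrm{loc}$ and $\kappa \in C^{k+1,1}(\R)$, the composition $\beta(\varphi) \, \kappa(\mathcal{E}(\varphi))$ lies in $C([0,T]; C^{k,1}(\overline{\Omega}))$ with a uniform bound $M$ depending only on $R$, $\epsilon$, $k$ and the coefficients. Then
\begin{equation*}
\Norm[C^{k,1}(\overline{\Omega})]{\Lambda[\varphi](t,\cdot) - \varphi_0} \leq \int_0^t \Norm[C^{k,1}(\overline{\Omega})]{\beta(\varphi) \, \kappa(\mathcal{E}(\varphi))} \D s \leq M t,
\end{equation*}
so that the norm bound $\|\Lambda[\varphi](t,\cdot)\|_{C^{k,1}} \leq R$ and the lower bound $\Lambda[\varphi](t,\cdot) \geq \epsilon$ both hold for $T$ small enough since $\|\varphi_0\|_{C^{k,1}} < R$ and $\inf_\Omega \varphi_0 > \epsilon$ by assumption. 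Continuity in $t$ with values in $C^{k,1}(\overline{\Omega})$ follows from the same bound applied to $|t_1 - t_2|$.

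For the contraction property on $\CHI_T^k$: given $\varphi_1, \varphi_2 \in \CHI_T^k$, the Lipschitz estimates \eqref{eq:lipschitzCk} and \eqref{eq:Lipschitzhighordersigma} of Proposition~\ref{prp:LipschitzEll} yield
\begin{equation*}
\Norm[C^{k,1}(\overline{\Omega})]{\beta(\varphi_1)\kappa(\mathcal{E}(\varphi_1)) - \beta(\varphi_2)\kappa(\mathcal{E}(\varphi_2))} \leq C \Norm[C^{k,1}(\overline{\Omega})]{\varphi_1(s,\cdot) - \varphi_2(s,\cdot)},
\end{equation*}
using also that $\beta \in C^{k+1,1}_\mathrm{loc}$ and that $C^{k,1}(\overline{\Omega})$ is an algebra. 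Integrating from $0$ to $t$ and taking the supremum over $t \in [0,T]$ gives
\begin{equation*}
\sup_{t\in[0,T]} \Norm[C^{k,1}(\overline{\Omega})]{\Lambda[\varphi_1](t,\cdot) - \Lambda[\varphi_2](t,\cdot)} \leq C T \sup_{s\in[0,T]} \Norm[C^{k,1}(\overline{\Omega})]{\varphi_1(s,\cdot) - \varphi_2(s,\cdot)},
\end{equation*}
so choosing $T_2 > 0$ with $C T_2 < 1$ makes $\Lambda$ a strict contraction. The Banach fixed point theorem then provides a unique $\varphi \in \CHI_T^k$ with $\varphi = \Lambda[\varphi]$, and setting $u = \mathcal{E}(\varphi)$ gives the unique solution pair in $C([0,T];C^{k,1}(\overline\Omega)) \times C([0,T];W^{k+2,p}(\Omega))$ to~\eqref{eq:generalviscousmildweak}.

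Continuous dependence follows by applying the same Lipschitz estimate together with Grönwall's inequality to the difference of two solutions with initial data $\varphi_0, \psi_0 \in C^{k,1}(\overline{\Omega})$, yielding $\|\varphi(t,\cdot)-\psi(t,\cdot)\|_{C^{k,1}} \leq \|\varphi_0-\psi_0\|_{C^{k,1}} e^{Ct}$. For the maximal existence alternative in (ii), I argue by contradiction: if the solution cannot be extended globally but $\|\varphi(t,\cdot)\|_{C^{k,1}} + \|1/\varphi(t,\cdot)\|_{L^\infty}$ stays bounded by some $K$ on $[0,T_{\max})$, then applying the local existence result at time $t_0$ close to $T_{\max}$ with parameters $R = K+1$ and $\epsilon = \tfrac{1}{2}K^{-1}$ produces an extension beyond $T_{\max}$, contradicting its maximality. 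The main subtlety is ensuring that all estimates used to set up the contraction are uniform in $\varphi \in \CHI_T^k$, which is exactly what Proposition~\ref{prp:LipschitzEll} guarantees through its dependence only on $\epsilon$ and $R$; the rest is a direct higher-regularity adaptation of the proof of Theorem~\ref{th:wellposed}.
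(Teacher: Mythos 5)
Your proposal is correct and follows essentially the same route as the paper: establishing invariance of $\CHI_T^k$ under $\Lambda$ via the uniform $C^{k,1}$-bound on $\beta(\varphi)\,\kappa(\mathcal{E}(\varphi))$ from Theorem~\ref{th:existence_wkp} and the algebra property of $C^{k,1}(\overline{\Omega})$, obtaining the contraction from the Lipschitz estimates of Proposition~\ref{prp:LipschitzEll}, and then transferring the continuous-dependence and blow-up arguments from Theorem~\ref{th:wellposed}. The paper's own proof is terser (it defers most details to Proposition~\ref{prp:localtimeexistence} and Theorem~\ref{th:wellposed}), but the substance is identical.
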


\begin{proof}
Similarly as in Proposition~\ref{prp:localtimeexistence}, we  prove that there exists some $\tilde T>0$ such that $\Lambda$ in \eqref{eq:lambdaoperatorLinfty} defines a contraction on $\CHI_T^k$ for all $T\in (0,\Tilde{T})$ with respect to the norm of $C([0,T]; C^{k,1}(\overline \Omega))$. Proceeding as in the proof of Proposition~\ref{prp:localtimeexistence}, we obtain
\begin{align*}
    \Norm[C^{k,1}(\overline{\Omega})]{\beta(\varphi(s,\cdot)) \, \kappa(\mathcal{E}(\varphi(s,\cdot)))}
    \leq C
\end{align*}
uniformly by Theorem~\ref{th:existence_wkp} together with the boundedness of $\varphi$ and the submultiplicativity of $\Norm[C^{k,1}(\overline{\Omega})]{\cdot}$. The lower bound follows from the respective estimate of the $L^\infty$-norm in Proposition~\ref{prp:localtimeexistence}.
The contraction property for $\varphi_1,\varphi_2 \in S_T^k$ can be shown by considering
\begin{align*}
    &\Norm[C^{k,1}(\overline{\Omega})]{\beta(\varphi_1(s,\cdot)) \, \kappa(\mathcal{E}(\varphi_1(s,\cdot))) - \beta(\varphi_2(s,\cdot)) \, \kappa(\mathcal{E}(\varphi_2(s,\cdot)))}\\
    &\leq C \Norm[C^{k,1}(\overline{\Omega})]{\beta(\varphi_1(s,\cdot)) - \beta(\varphi_2(s,\cdot))}
    + C \Norm[C^{k,1}(\overline{\Omega})]{\kappa(\mathcal{E}(\varphi_1(s,\cdot))) - \kappa(\mathcal{E}(\varphi_2(s,\cdot)))}\\
    &\leq C \Norm[C^{k,1}(\overline{\Omega})]{\varphi_1(s,\cdot) - \varphi_2(s,\cdot)}
\end{align*}
which follows from Theorem~\ref{th:existence_wkp}, Proposition~\ref{prp:LipschitzEll} and the fact that $\beta \in C^{k+1,1}([\epsilon,R])$.

Using the contraction property of $\Lambda$ on $\CHI_T^k$, the \REV{statements (i) and (ii) now follow}  analogously to Theorem~\ref{th:wellposed}.
\end{proof}

\subsection{Existence for the coupled problem with nonsmooth initial data for general $d$}\label{sec:exEuler}

The results of Section \ref{sec:coupledviscous} yield existence and uniqueness of solutions under the minimal regularity requirement $\varphi_0 \in L^\infty(\Omega)$, but apply only to spatial dimensions $d \in \{1,2\}$.
To obtain existence of solutions with nonsmooth data for general $d$, we now use a different approach based on a time-discrete function-valued explicit Euler scheme for the viscous limit model~\eqref{eq:generalviscousmildweak}. Under similar assumptions as in Section \ref{sec:coupledviscous}, but under the slightly stronger regularity requirement $\varphi_0 \in L^\infty(\Omega) \cap BV(\Omega)$, we obtain existence of the solution to  \eqref{eq:generalviscousmildweak} for arbitrary $d$, as well as uniqueness for $d\leq 2$ and sufficiently small $T$.

We assume $\varphi_0 \in L^\infty(\Omega) \cap BV(\Omega)$ satisfying $\| \varphi_0\|_{L^\infty} < R$ and $\inf_\Omega \varphi_0 > \epsilon$ for some $R>\epsilon>0$ to be given and define the space
\begin{align*}
    \CHI
    = \Bigl\{ \varphi \in L^\infty(\Omega) \colon\, \Norm[L^\infty(\Omega)]{\varphi} \leq R, \varphi(x) \geq \epsilon \text{ for a.e.\ } x \in \Omega  \Bigl\}.
\end{align*}
We consider a time discretisation on $[0,T]$ with uniform grid points $t_k^N := k \tau$, where $\tau = \frac{T}{N}$.  We set 
$\varphi_0^N = \varphi_0 $
and define
\begin{align}\label{eq:exeul}
    \varphi_{k+1}^N
    = \varphi_k^N - \tau \beta(\varphi_k^N) \, \kappa(u(\varphi_k^N)),
\end{align}
where $u_k^N = u(\varphi_k^N)$ denotes the solution of~\eqref{eq:generalviscousweak} at time $t_k^N$. We obtain the following estimate of $\varphi_{k}^N$ in \eqref{eq:exeul}.

\begin{lem}
    Let $\Omega \subset \R^d$ be a domain and suppose that $\varphi_0 \in BV(\Omega)$ with $\| \varphi_0\|_{L^\infty} < R$ and $\inf_\Omega \varphi_0 > \epsilon$.
    Then there exists $T > 0$ such that $\varphi_k^N \in \CHI$ for all $N \in \N$ and $k = 1,\ldots,N$, and there exists $C > 0$ such that the following estimate holds,
    \begin{align}\label{eq:eulerestimate}
        \max_{k=0,\ldots,N}  \Norm[BV(\Omega)]{\varphi_{k}^N} \leq e^{C T} \Norm[BV(\Omega)]{\varphi_0}.
    \end{align}
\end{lem}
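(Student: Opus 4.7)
The plan is to prove the two assertions in sequence: first that the iterates $\varphi_k^N$ remain in $\CHI$ for some uniform $T>0$, and then that their $BV$-norms grow at most exponentially via a discrete Gr\"onwall argument.

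For membership in $\CHI$, I would argue by induction on $k$. Assuming $\varphi_0^N,\dots,\varphi_k^N\in\CHI$, Theorems~\ref{th:existence_uniqueness_elliptic} and~\ref{th:unifbounded} yield an $L^\infty$-bound on $u_k^N = u(\varphi_k^N)$ depending only on $\epsilon$, $R$, $\sigma$ and $\Omega$. Combined with the local Lipschitz continuity of $\beta$ and $\kappa$, this produces a constant $M$ independent of $N$ and $k$ such that $\|\beta(\varphi_k^N)\kappa(u_k^N)\|_{L^\infty(\Omega)}\le M$. The scheme \eqref{eq:exeul} then gives $\|\varphi_{k+1}^N-\varphi_k^N\|_{L^\infty}\le M\tau$, so by telescoping $\|\varphi_{k+1}^N\|_{L^\infty}\le\|\varphi_0\|_{L^\infty}+M(k+1)\tau$ and $\inf_\Omega\varphi_{k+1}^N\ge\inf_\Omega\varphi_0-M(k+1)\tau$. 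Choosing $T$ small enough that $\|\varphi_0\|_{L^\infty}+MT<R$ and $\inf_\Omega\varphi_0-MT>\epsilon$ closes the induction uniformly in $N$.

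For the $BV$ estimate, I would combine the standard chain rule $|f\circ v|_{BV}\le\mathrm{Lip}(f)\,|v|_{BV}$ for Lipschitz $f$ with the product rule $|vw|_{BV}\le\|v\|_{L^\infty}|w|_{BV}+\|w\|_{L^\infty}|v|_{BV}$ (both valid for $v,w\in BV\cap L^\infty$). Applied to \eqref{eq:exeul} with $f=\beta$ and $f=\kappa$, this gives
\[
|\varphi_{k+1}^N|_{BV}\le|\varphi_k^N|_{BV}+\tau\bigl(C_\beta|\varphi_k^N|_{BV}+C_\kappa|u_k^N|_{BV}\bigr),
\]
with $C_\beta,C_\kappa$ depending only on $\epsilon,R,\sigma$. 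The crucial observation is that $|u_k^N|_{BV}$ is bounded uniformly in $k,N$ and \emph{independently} of $|\varphi_k^N|_{BV}$: by \eqref{eq:elliptic_W12bound} the norm $\|u_k^N\|_{W^{1,2}}$ is controlled by $\|\alpha(\varphi_k^N)\zeta(\varphi_k^N)\|_{L^2}$, which is uniformly bounded on $\CHI$, and since $\Omega$ is bounded, Cauchy--Schwarz yields $|u_k^N|_{BV}\le|\Omega|^{1/2}\|\nabla u_k^N\|_{L^2}\le C_u$. Together with the analogous $L^1$-estimate $\|\varphi_{k+1}^N\|_{L^1}\le\|\varphi_k^N\|_{L^1}+M\tau|\Omega|$, this produces a recursion of the form $\|\varphi_{k+1}^N\|_{BV}\le(1+C\tau)\|\varphi_k^N\|_{BV}+C'\tau$, and discrete Gr\"onwall then gives \eqref{eq:eulerestimate} (the additive constant being absorbed using the a priori lower bound $\|\varphi_0\|_{BV}\ge\epsilon|\Omega|>0$). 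I expect the main obstacle to be the apparent need for an elliptic $BV$-regularity result controlling $|u|_{BV}$ by $|\varphi|_{BV}$—no such result holds in general—and the key insight that circumvents this is precisely that the uniform $W^{1,2}$-bound on $u_k^N$ already suffices, since it embeds into $BV$ on a bounded domain via $L^2\hookrightarrow L^1$.
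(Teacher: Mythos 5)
Your proposal is correct and follows essentially the same route as the paper: a uniform $L^\infty$-bound on $\beta(\varphi_k^N)\kappa(u_k^N)$ to keep the iterates in $\CHI$ for small $T$, then the Volpert chain rule together with the observation that $\Norm[BV(\Omega)]{u_k^N}$ is controlled by the uniform $W^{1,2}$-bound \eqref{eq:elliptic_W12bound}, followed by a discrete Gr\"onwall step. The only cosmetic difference is that you use the Leibniz-type product estimate (and then absorb the resulting additive $C'\tau$ via the lower bound $\Norm[BV(\Omega)]{\varphi_k^N}\geq\epsilon\,|\Omega|$), whereas the paper invokes the Banach-algebra submultiplicativity of $\Norm[BV(\Omega)]{\cdot}$ to obtain the purely multiplicative recursion directly; both yield \eqref{eq:eulerestimate}.
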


\begin{proof}
First of all we observe that by Theorem~\ref{th:unifbounded}, there exists $\bar{C}$ depending only on $\epsilon$ and $R$ such that whenever $\varphi_k^N \in \CHI$, we have
\begin{equation}\label{eq:Cbar}
	\Norm[L^\infty(\Omega)]{\beta(\varphi_k^N) \, \kappa(u(\varphi_k^N))}
	\leq \bar{C} \,.
\end{equation}
By the Volpert chain rule for $BV$ functions (see for example~\cite{Volpert1967} as well as~\cite{Ambrosio1990}), for any $\psi \in BV(\Omega)$ and $F \in W^{1,\infty}(\R)$, we have $\Norm[BV(\Omega)]{F \circ \psi} \leq \Norm[L^\infty]{F'} \Norm[BV(\Omega)]{\psi}$. Applying this estimate, we make use of the Lipschitz continuity of $\kappa$ on $\R$ and of $\beta$ on $[\epsilon,R]$. Using in addition that $BV(\Omega)$ is a Banach algebra as well as~\eqref{eq:elliptic_W12bound}, assuming that $\varphi_k^N \in \CHI$ we obtain
\begin{equation}
\begin{aligned}\label{bvstepboundviscous}
    \Norm[BV(\Omega)]{\beta(\varphi_k^N) \, \kappa(u(\varphi_k^N))}
    &\leq \Norm[BV(\Omega)]{\beta(\varphi_k^N)} \Norm[BV(\Omega)]{\kappa(u(\varphi_k^N))}\\
    &\leq C_1 \Norm[BV(\Omega)]{\varphi_k^N} \Norm[BV(\Omega)]{u(\varphi_k^N)}\\
    &\leq C_1 \Norm[BV(\Omega)]{\varphi_k^N} \Norm[W^{1,2}(\Omega)]{u(\varphi_k^N)}
    \leq C_2 \Norm[BV(\Omega)]{\varphi_k^N}.
\end{aligned}
\end{equation}
We now choose the largest $T$ such that
\begin{equation*}
    \varphi_0 - T \bar C \geq \epsilon,\qquad
    \varphi_0 + T \bar C \leq R \,,
\end{equation*}
which ensures that $\varphi_k^N \in \CHI$ for all $N \in \N$ and $k = 1,\ldots,N$.
Then as a consequence of \eqref{bvstepboundviscous},
\begin{equation*}
    \Norm[BV(\Omega)]{\varphi_{k+1}^N} \leq \Norm[BV(\Omega)]{\varphi_k^N} + \tau C_2 \Norm[BV(\Omega)]{\varphi_k^N}
\end{equation*}
which implies \eqref{eq:eulerestimate}.
\end{proof}

From~\eqref{eq:exeul}, we  define two space-time functions
\begin{align}\label{eq:phisequencedef}
\begin{split}
    \varphi^N(t,\cdot)
    &= \frac{1}{\tau} \left[ (t_{k+1}^N - t) \varphi_k^N + (t - t_k^N) \varphi_{k+1}^N \right]\\
    \tilde{\varphi}^N(t,\cdot)
    &= \varphi_k^N
    \end{split}
\end{align}
for $t \in [t_k^N,t_{k+1}^N)$ for each $k$. The first observation is that
\begin{align*}
    \partial_t \varphi^N
    = \frac{\varphi_{k+1}^N - \varphi_k^N}{\tau}
     = - \beta(\varphi_k^N) \, \kappa(u(\varphi_k^N))
    = - \beta(\tilde{\varphi}^N) \, \kappa(u(\tilde{\varphi}^N))
\end{align*}
on each $(t_k^N,t_{k+1}^N)$, and thus on $[0,T]$. As a consequence,
\begin{align}\label{eq:timediscrete}
    \varphi^N(t,\cdot)
    = \varphi_0 - \int_0^t \beta(\tilde{\varphi}^N(s,\cdot)) \, \kappa(u(\tilde{\varphi}^N(s,\cdot))) \di s
\end{align}
a.e.\ in $\Omega$. The functions $\varphi^N, \tilde\varphi^N\colon [0,T]\to BV(\Omega)$ are Bochner measurable, since for each $N$, their image has finite dimension. Next, we prove the existence of solutions to \eqref{eq:generalviscousmildweak} by showing that there exists a strictly increasing $(N_n)_{n\in\N}$ such that the subsequences $\varphi^{N_n}, \tilde\varphi^{N_n}$ for $n\to \infty$ converge to the same limit $\varphi$ solving the integral formulation \eqref{eq:generalviscousmild}, that is,
 \begin{align*}
       \varphi(t,\cdot)
       = \varphi_0 - \int_0^t \beta(\varphi(s,\cdot)) \, \kappa(u(s,\cdot)) \di s \,.
\end{align*}

\begin{thm}\label{th:bvwellposed}
    Let \REV{$d\geq 1$ and $R>\epsilon>0$, and let} $\Omega \subset \R^d$ be a bounded domain. \REV{Assume that $\alpha, \beta, \zeta, \sigma$ satisfy Assumptions \ref{ass:sigma} and \ref{ass:alphabeta}, and let $\kappa$ be defined as in \eqref{eq:kappa}.} Suppose that $\varphi_0 \in BV(\Omega)$ with ${\| \varphi_0\|_{L^\infty} < R}$ and $\inf_\Omega \varphi_0 > \epsilon$, \REV{and let the sequences $\varphi^{N},\tilde\varphi^{N}$ for $N\in\N$ be defined as in \eqref{eq:phisequencedef} via 
    $\varphi_{k}^N$ in \eqref{eq:exeul} for $k=1,\ldots,N$.}
    Then, there exist   $T>0$, $\varphi \in W^{1,\infty}(0,T;BV(\Omega))$ \REV{and the corresponding $u\in L^\infty((0,T)\times \Omega)$ with $\sup_{t \in (0,T)} \| u(t,\cdot) \|_{W^{1,2}(\Omega)} < \infty$ such that	
    \[
		\begin{aligned}
			\partial_t \varphi &= -{\beta(\varphi)}{\kappa(u)} ,\\
        0 &= \nabla \cdot \alpha(\varphi) (\nabla u + \zeta(\varphi))-{\beta(\varphi)}{\kappa(u)},
		\end{aligned}
	\]
	interpreted in the sense of~\eqref{eq:generalviscousmildweak}.}       Moreover, there exists a strictly increasing $(N_n)_{n\in\N}$ such that the subsequences $\varphi^{N_n}, \tilde\varphi^{N_n}$ converge to $\varphi$ in $L^\infty(0,T;L^1(\Omega))$ \REV{and the subsequence $u(\tilde\varphi^{N_n})$ converges 
     weakly in $W^{1,2}(\Omega)$, strongly in $L^2(\Omega)$ and weak-* in $L^\infty(\Omega)$ to  $ u\in W^{1,2}(\Omega)\cap L^\infty(\Omega)$ a.e.\ in time}  as $n \to \infty$.
\end{thm}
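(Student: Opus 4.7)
\bigskip
\noindent\textbf{Proof plan.}
The plan is to extract a subsequence of the interpolants $\varphi^N$ converging in $L^\infty(0,T;L^1(\Omega))$, then, for almost every $t$, to identify the corresponding limit $u(t,\cdot)$ through the elliptic problem \eqref{eq:viscousellipticproof}, and finally to verify that the pair $(\varphi,u)$ satisfies the mild-weak system of \eqref{eq:generalviscousmildweak}.

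First, I would combine the uniform $L^\infty(0,T;BV(\Omega))$-bound \eqref{eq:eulerestimate} with the pointwise bound \eqref{eq:Cbar} on $\beta(\varphi_k^N)\kappa(u(\varphi_k^N))$: the latter implies that every $\varphi^N$ is Lipschitz continuous from $[0,T]$ into $L^\infty(\Omega) \hookrightarrow L^1(\Omega)$ with a Lipschitz constant independent of $N$. Since bounded subsets of $BV(\Omega)\cap L^\infty(\Omega)$ are relatively compact in $L^1(\Omega)$ by the Rellich--Kondrachov theorem for $BV$, an Arzelà--Ascoli argument with values in $L^1(\Omega)$ yields a subsequence $\varphi^{N_n}$ converging in $C([0,T];L^1(\Omega))$ to some $\varphi$. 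Because $\Norm[L^\infty(0,T;L^\infty(\Omega))]{\varphi^N-\tilde\varphi^N}\leq C\tau \to 0$, the piecewise constant interpolants $\tilde\varphi^{N_n}$ converge to the same $\varphi$. Lower semicontinuity of the $BV$-seminorm together with the uniform time-Lipschitz bound yields $\varphi\in W^{1,\infty}(0,T;L^\infty(\Omega))\cap L^\infty(0,T;BV(\Omega))$, and the two-sided bounds $\epsilon\leq\varphi^{N_n}\leq R$ are preserved in the limit.

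Since convergence in $L^\infty(0,T;L^1(\Omega))$ entails convergence in $L^1(\Omega_T)$, a further extraction yields $\tilde\varphi^{N_n}\to\varphi$ a.e.\ in $\Omega_T$ while remaining in $[\epsilon,R]$. For a.e.\ $t\in(0,T)$, the elliptic solutions $u_n(t,\cdot):=u(\tilde\varphi^{N_n}(t,\cdot))$ are uniformly bounded in $W^{1,2}_0(\Omega)\cap L^\infty(\Omega)$ by \eqref{eq:elliptic_W12bound} and Theorem~\ref{th:unifbounded}. To show that $u_n(t,\cdot)$ converges to the solution $u(t,\cdot):=u(\varphi(t,\cdot))$ of the limiting elliptic problem, I would pass to the limit in the weak formulation tested against $v\in C^\infty_0(\Omega)$: the coefficients $\alpha(\tilde\varphi^{N_n}(t,\cdot))$, $\zeta(\tilde\varphi^{N_n}(t,\cdot))$ and $\beta(\tilde\varphi^{N_n}(t,\cdot))$ converge in $L^p(\Omega)$ for every $p<\infty$ by dominated convergence, $\nabla u_n(t,\cdot)$ is weakly convergent in $L^2(\Omega)$ along subsequences, and the nonlinear term $\beta(\tilde\varphi^{N_n})\kappa(u_n)$ is handled by strong $L^2$-convergence of $u_n(t,\cdot)$ combined with Lipschitz continuity of $\kappa$. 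Since Theorem~\ref{th:existence_uniqueness_elliptic} guarantees uniqueness of the limiting elliptic problem, every weak cluster point of $(u_n(t,\cdot))_n$ must coincide with $u(t,\cdot)$, so the whole sequence converges in the weak $W^{1,2}$, strong $L^2$ and weak-$*$ $L^\infty$ senses claimed.

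Finally, I would pass to the limit in the Euler identity \eqref{eq:timediscrete} by dominated convergence in $s\in[0,t]$, using the uniform bound \eqref{eq:Cbar} on the integrand together with the just-established a.e.\ convergence $\beta(\tilde\varphi^{N_n}(s,\cdot))\kappa(u(\tilde\varphi^{N_n}(s,\cdot)))\to\beta(\varphi(s,\cdot))\kappa(u(s,\cdot))$ for a.e.\ $s$; this delivers the mild formulation \eqref{eq:generalviscousmild}, and together with the limit elliptic equation, the required mild-weak identities. The uniform $W^{1,2}$- and $L^\infty$-bounds on $u_n$ transfer to $u$ and yield the stated regularity. The principal obstacle will be the limit passage in the elliptic problem without uniform strong $L^\infty$-convergence of the coefficients; this is circumvented by first restricting to smooth compactly supported test functions (exploiting $\nabla v\in L^\infty(\Omega)$ to pair strong $L^2$-convergence of $\alpha(\tilde\varphi^{N_n})\nabla v$ with weak $L^2$-convergence of $\nabla u_n$) and then extending to $W^{1,2}_0(\Omega)$ by density and the uniform $W^{1,2}$-bound on $u_n$.
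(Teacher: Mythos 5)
Your proposal is correct and follows essentially the same route as the paper: a uniform $W^{1,\infty}(0,T;BV(\Omega))$-bound on the Euler interpolants, compactness of $BV(\Omega)$ in $L^1(\Omega)$ (the paper invokes Aubin--Lions where you use Arzel\`a--Ascoli, which amounts to the same thing here), identification of the elliptic limit by pairing weak $L^2$-convergence of $\nabla u_n$ with a.e.\ convergence of the coefficients and dominated convergence, and a final dominated-convergence passage in the time integral; your use of uniqueness of the elliptic solution to upgrade subsequential to full-sequence convergence for a.e.\ $t$ is a clean touch. The only point to tighten is the regularity of the limit: you conclude $\varphi\in W^{1,\infty}(0,T;L^\infty(\Omega))\cap L^\infty(0,T;BV(\Omega))$, whereas the theorem asserts $\varphi\in W^{1,\infty}(0,T;BV(\Omega))$; this follows from the uniform bound \eqref{bvstepboundviscous} on $\partial_t\varphi^N$ in $L^\infty(0,T;BV(\Omega))$ (equivalently, from applying the Volpert chain rule and the Banach-algebra property of $BV$ to the limit identity $\partial_t\varphi=-\beta(\varphi)\,\kappa(u)$), and should be stated explicitly.
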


\begin{proof}
From~\eqref{eq:eulerestimate} and \eqref{bvstepboundviscous},  we obtain
\begin{align*}
    \Norm[W^{1,\infty}(0,T;BV(\Omega))]{\varphi^N}
    &= \sup_{t \in [0,T]} \Norm[BV(\Omega)]{\varphi^N(t,\cdot)} + \sup_{t \in [0,T]} \Norm[BV(\Omega)]{\partial_t \varphi^N(t,\cdot)} \\
    & \leq ( 1 + C )  e^{C T} \Norm[BV(\Omega)]{\varphi_0}  .
\end{align*}
Note that $\varphi^N \in C([0,T];BV(\Omega))$ and $\partial_t \varphi^N \in L^\infty(0,T; BV(\Omega))$, where $BV(\Omega)$ is compactly embedded in $L^1(\Omega)$. By the Aubin-Lions theorem, there exists a subsequence, again denoted $\varphi^N$, that converges strongly in $C([0,T];L^1(\Omega))$ to some $\varphi \in C([0,T];BV(\Omega))$.
Moreover,
\begin{align*}
    \Norm[L^\infty(0,T;L^\infty(\Omega))]{\tilde{\varphi}^N - \varphi^N}
    &= \sup_{k=0,\ldots,N-1} \Norm[L^\infty(\Omega)]{\varphi_{k+1}^N - \varphi_k^N}\\
    &= \tau \sup_{k=0,\ldots,N-1} \Norm[L^\infty(\Omega)]{\beta(\varphi_k^N) \, \kappa(u(\varphi_k^N))}
    \leq \frac{T \bar{C}}{N}
\end{align*}
with $\bar C$ as in \eqref{eq:Cbar},
which implies that $\tilde{\varphi}^N \to \varphi$ in each space containing $L^\infty(0,T;L^1(\Omega))$.

Let us now consider the convergence of $u(\tilde{\varphi}^N)$ a.e.\ in time. We know that $\Norm[W^{1,2}(\Omega)]{u(\tilde{\varphi}^N)}$ is uniformly bounded. By Theorem \ref{th:unifbounded} we also obtain that $\Norm[L^\infty(\Omega)]{u(\tilde{\varphi}^N)}$ is uniformly bounded. Passing to a subsequence, $u(\tilde{\varphi}^N)$  converges weakly in $W^{1,2}(\Omega)$, strongly in $L^2(\Omega)$ and weak-* in $L^\infty(\Omega)$ to some $\tilde u \in W^{1,2}(\Omega)\cap L^\infty(\Omega)$. It remains to verify that $\tilde u = u(\varphi)$, that is,
\begin{align}\label{eq:weakformutilde}
   \int_\Omega \alpha(\varphi) \nabla \tilde u \cdot\nabla v + \beta(\varphi) \, \kappa(\tilde{u}) \, v \di x = \int_\Omega \alpha(\varphi) \, \zeta(\varphi) \cdot \nabla v\di x,\quad \text{for all $v \in W^{1,2}_0(\Omega)$.}
\end{align}
Since $\alpha$ is Lipschitz, $\alpha(\tilde{\varphi}^N)$  converges in $L^1(\Omega)$ and  has a subsequence (not renamed) that converges pointwise a.e.\ in $\Omega$. We consider
\begin{multline*}
	\int_\Omega \alpha(\varphi) \nabla \tilde u \cdot\nabla v \di x
	- \int_\Omega \alpha(\tilde{\varphi}^N) \nabla u(\tilde{\varphi}^N) \cdot\nabla v \di x  \\
	= \int_\Omega \alpha(\varphi) (\nabla \tilde u - \nabla u(\tilde{\varphi}^N))\cdot\nabla v\di x 
	+ \int_\Omega ( \alpha(\varphi) - \alpha(\tilde{\varphi}^N))  \nabla u(\tilde{\varphi}^N)\cdot\nabla v\di x.
\end{multline*}
Using  $\nabla u(\tilde{\varphi}^N) \rightharpoonup \nabla \tilde u$, we obtain that
\begin{align*}
    \int_\Omega \alpha(\varphi) (\nabla \tilde u - \nabla u(\tilde{\varphi}^N))\cdot\nabla v\di x \to 0.
\end{align*}
By the dominated convergence theorem, we have
\begin{align*}
  \left|  \int_\Omega ( \alpha(\varphi) - \alpha(\tilde{\varphi}^N))  \nabla u(\tilde{\varphi}^N)\cdot\nabla v\di x  \right|
   \leq \Norm[L^2]{(\alpha(\varphi)-\alpha(\tilde{\varphi}^N))\nabla v}\Norm[L^2]{\nabla u(\tilde{\varphi}^N)} \to 0,
\end{align*}
since $|\alpha(\varphi)-\alpha(\tilde{\varphi}^N)|^2|\nabla v|^2$ has an integrable majorant and goes to zero pointwise a.e.; the convergence of the other terms in \eqref{eq:weakformutilde} can be seen  similarly, but the arguments are  slightly easier because the involved sequences   converge strongly. Altogether, we  obtain $\tilde u = u(\varphi)$, and hence the convergence of $u(\tilde{\varphi}^N)$ to $u(\varphi)$ in the sense stated above.

Next we show that the right-hand side of~\eqref{eq:timediscrete} converges to the respective limit. The Lipschitz continuity of $\kappa$ yields strong convergence of $\kappa(u(\tilde{\varphi}^N))$ in $L^2(\Omega)$. Hence we have
\begin{align*}
    \Norm[L^1(\Omega)]{\beta(\varphi) \, \kappa(u(\varphi)) - \beta(\tilde{\varphi}^N) \, \kappa(u(\tilde{\varphi}^N))}
    \leq& \Norm[L^1(\Omega)]{\beta(\tilde{\varphi}^N) \bl \kappa(u(\varphi))  - \kappa(u(\tilde{\varphi}^N)) \br }\\
    &+ \Norm[L^1(\Omega)]{\kappa(u(\varphi)) \bl \beta(\varphi) - \beta(\tilde{\varphi}^N) \br }
\end{align*}
Both summands converge to zero due to the uniform $L^\infty$-bound on $\beta(\tilde{\varphi}^N)$ and the dominated convergence theorem together with the Lipschitz continuity of $\beta$  (since we can pass to a subsequence which converges a.e.\ in $\Omega$).
We use the dominated convergence theorem to show that the right-hand side of
\begin{multline*}
    \Norm[L^1(\Omega)]{\int_0^t \beta(\tilde{\varphi}^N(s,\cdot)) \, \kappa(u(\tilde{\varphi}^N(s,\cdot))) \di s - \int_0^t \beta(\varphi(s,\cdot)) \, \kappa(u(\varphi(s,\cdot))) \di s}\\
    \leq \int_0^t \Norm[L^1(\Omega)]{\beta(\tilde{\varphi}^N(s,\cdot)) \, \kappa(u(\tilde{\varphi}^N(s,\cdot))) - \beta(\varphi(s,\cdot)) \, \kappa(u(\varphi(s,\cdot)))} \di s
\end{multline*}
goes to zero for all $t \in [0,T]$ since its $L^\infty((0,T)\times \Omega)$-norm is bounded uniformly, which provides an integrable majorant. After  passing to an appropriate subsequence, this implies the convergence of the time integral in \eqref{eq:timediscrete} a.e.\ in $\Omega$.
\end{proof}\begin{rem}\REV{
	When $d \leq 2$, the solution from Theorem \ref{th:bvwellposed} coincides with the solution to \eqref{eq:generalviscousmildweak} of Theorem \ref{th:wellposed}, even though $BV(\Omega)$ is not a subspace of $L^\infty(\Omega)$ for $d\geq 2$:
for $d \leq 2$, let $\Omega \subset \R^d$ be a bounded domain with $C^1$-boundary. Suppose that $\varphi_0 \in BV(\Omega)$ with $\| \varphi_0\|_{L^\infty} < R$ and $\inf_\Omega \varphi_0 > \epsilon$.  The approximations $\varphi^N$ for $N\in\N$ given by \eqref{eq:exeul} converge in particular strongly in $C([0,T]; L^1(\Omega))$, which implies $\Norm[C({[0,T]};L^\infty(\Omega))]{\varphi} \leq R$. Then Theorems~\ref{th:wellposed} and \ref{th:bvwellposed} together imply that there exists a $T>0$ as well as a unique $\varphi \in W^{1,\infty}(0,T;BV(\Omega))$ and corresponding $u$ solving~\eqref{eq:generalviscousmildweak}.}
\end{rem}

\section{Analysis of the full viscoelastic model}\label{sec:viscoelasticmodel}
In this section, we investigate the existence and uniqueness of solutions to the full viscoelastic model \eqref{eq:fullmildweak}.
We \REV{continue to} assume that $\Omega \subset \R^d$ is a bounded domain \REV{with $d\in\N$} and we write $\Omega_T= (0,T]\times \Omega$ for a fixed time $T>0$.

To still obtain well-posedness of this problem for initial data $\varphi_0$ with jump discontinuities, in the present setting we need more restrictive regularity assumptions. 
As typical in applications of models \eqref{eq:fullmildweak}, we assume piecewise H\"older continuous initial data that may have jump discontinuities at boundaries of a partition of $\Omega$, which correspond to material interfaces.
Specifically, we shall assume that $\varphi_0$ is H\"older continuous on pairwise disjoint open subsets 
\begin{equation}\label{eq:partition}
	\Omega^j\subset \Omega,\quad j = 1,\ldots,M, 
\end{equation}
such that $\overline{\Omega} = \bigcup_{j=1}^M \overline{\Omega}^j$, $\Omega^j \Subset \Omega$ for $j = 1,\ldots,M-1$ and $\partial \Omega \subset \partial \Omega^M$. We set $\Omega_T^j = (0,T) \times \Omega^j$.

We first consider the parabolic problem \eqref{eq:u} separately for fixed $\varphi$.
We again assume that $\sigma$ satisfies Assumptions \ref{ass:sigma} and $\alpha, \beta$ and $\zeta$ satisfy Assumption~\ref{ass:alphabeta} in what follows.

\subsection{Parabolic problem}\label{sec:parabolic}

In this section, we study the existence and uniqueness of solutions of the parabolic problem  \eqref{eq:u}.
We set $\Qalpha=\tfrac{1}{Q}\alpha$ and $\Qbeta=\tfrac{1}{Q}\beta$. 
Weak solutions of~\eqref{eq:u} are characterized as 
$u\in L^2(0,T;W^{1,2}_0(\Omega))$ with weak derivatives $\partial_t u\in L^2(0,T; W^{-1,2}(\Omega))$ and $u(0)=u_0$ such that for every $\psi\in W^{1,2}_0(\Omega)$ and a.e.\ $t\in[0,T]$, 
\begin{multline}\label{eq:weakpara}
	\int_\Omega \partial_t u(t) \psi \di x +\int_\Omega \Qalpha(\varphi(t)) \, \nabla u(t) \cdot \nabla \psi\di x + \int_\Omega \Qbeta(\varphi(t)) \, \kappa(u(t))\psi \di x\\
	= \int_\Omega \psi \nabla \cdot \Qalpha(\varphi(t)) \, \zeta(\varphi(t)) \di x\,.
\end{multline}
Note that 
\begin{align*}
    \Qalpha(\varphi) &\in L^\infty((0,T)\times \Omega),\\
    \frac{\Qbeta(\varphi)}{\sigma(u)} &\in L^\infty((0,T)\times \Omega)\text{ for any } u \in L^2(0,T;W^{1,2}(\Omega)),\\
    \nabla \cdot \Qalpha(\varphi) \zeta(\varphi) &\in L^2(0,T; W^{-1,2}(\Omega)).
\end{align*}

The existence and uniqueness of solutions to the semilinear problem~\eqref{eq:weakpara} can be shown by standard techniques, see for example \cite[Chapter 7]{Evans2010}, adapted to our particular assumptions with source term $\nabla \cdot \Qalpha(\varphi) \, \zeta(\varphi) \in L^2(0,T;W^{-1,2}(\Omega))$. The proof is given in Appendix~\ref{sec:appexistence}.

\begin{thm}\label{th:exparabolic}
	Let $\Omega$ be a bounded domain and let $\varphi\in L^\infty((0,T)\times\Omega)$. 
	There exists a unique weak solution $u\in C([0,T]; L^2(\Omega))\cap L^2(0,T; W^{1,2}_0(\Omega))$ of \eqref{eq:weakpara}. Moreover, there exists a constant $C>0$, depending on $\Omega$, $T$, $Q$ and the coefficients of~\eqref{eq:weakpara}, such that
	\begin{multline}\label{eq:bound_un}
		\| u\|_{L^\infty(0,T;L^2(\Omega))} + \| u\|_{L^2(0,T;W^{1,2}_0(\Omega))} + \| \partial_t u \|_{L^2(0,T;W^{-1,2}(\Omega))}\\
		\leq C \bl \| \Qalpha(\varphi) \, \zeta(\varphi)\|_{L^2(0,T;L^{2}(\Omega))} +\|u_0\|_{L^2(\Omega)} \br.
	\end{multline}
\end{thm}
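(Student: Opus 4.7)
The plan is to use a Galerkin approximation adapted to the divergence-form source term. First, I would choose an orthonormal basis $\{w_k\}_{k\in\N}$ of $L^2(\Omega)$ whose elements lie in $W^{1,2}_0(\Omega)$, for example the eigenfunctions of the Dirichlet Laplacian. Letting $V_n=\operatorname{span}\{w_1,\ldots,w_n\}$, I would seek $u_n(t)=\sum_{k=1}^n d_k^n(t)\,w_k$ satisfying the system obtained by replacing $\psi$ in \eqref{eq:weakpara} with each $w_k$, $k=1,\ldots,n$, together with $d_k^n(0)=(u_0,w_k)_{L^2}$. The source term is recast as $\int_\Omega w_k\,\nabla\cdot(\Qalpha(\varphi)\,\zeta(\varphi))\D x = -\int_\Omega \Qalpha(\varphi)\,\zeta(\varphi)\cdot\nabla w_k\D x$, which is well-defined since $\Qalpha(\varphi)\,\zeta(\varphi)\in L^\infty((0,T)\times\Omega)\subseteq L^2(\Omega_T)$. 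Because $\kappa$ is globally Lipschitz by Assumptions~\ref{ass:sigma} and both $\Qalpha(\varphi)$ and $\Qbeta(\varphi)$ are essentially bounded, the resulting ODE system for the coefficients $(d_k^n)$ admits a unique global solution by Picard--Lindel\"of.

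The next step is to derive uniform a priori estimates by testing with $u_n$ itself. Using that $\kappa(v)v=v^2/\sigma(v)\geq 0$, that $\Qbeta(\varphi)\geq 0$, and that $\Qalpha$ is uniformly bounded from below by some $\alpha_\ast>0$ on the essential range of $\varphi$, one obtains
\begin{equation*}
	\tfrac{1}{2}\,\partial_t\|u_n(t)\|_{L^2}^2+\alpha_\ast\|\nabla u_n(t)\|_{L^2}^2 \leq \|\Qalpha(\varphi(t))\,\zeta(\varphi(t))\|_{L^2}\,\|\nabla u_n(t)\|_{L^2}.
\end{equation*}
Young's inequality absorbs the gradient term on the right, and integrating over $(0,T)$ yields uniform bounds on $\|u_n\|_{L^\infty(0,T;L^2(\Omega))}$ and $\|u_n\|_{L^2(0,T;W^{1,2}_0(\Omega))}$ in terms of $\|u_0\|_{L^2}$ and $\|\Qalpha(\varphi)\,\zeta(\varphi)\|_{L^2(0,T;L^2(\Omega))}$, matching \eqref{eq:bound_un}. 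For the estimate on $\partial_t u_n$ in $L^2(0,T;W^{-1,2}(\Omega))$, I would fix $\psi\in W^{1,2}_0(\Omega)$ with $\|\psi\|_{W^{1,2}}\leq 1$, exploit that $\partial_t u_n\in V_n$ so that testing against $\psi$ is equivalent to testing against its $L^2$-projection onto $V_n$, and bound each term in the weak formulation via Cauchy--Schwarz together with the Lipschitz bound on $\kappa$ and the $L^\infty$-bounds on $\Qalpha(\varphi), \Qbeta(\varphi)$.

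Passage to the limit combines weak compactness with an Aubin--Lions argument. Weak-$*$ compactness in $L^\infty(0,T;L^2(\Omega))$ and weak compactness in $L^2(0,T;W^{1,2}_0(\Omega))$ extract a subsequence converging to some $u$; the uniform $L^2(0,T;W^{-1,2}(\Omega))$-bound on $\partial_t u_n$ combined with the compact embedding $W^{1,2}_0(\Omega)\hookrightarrow L^2(\Omega)$ yields strong convergence $u_n\to u$ in $L^2(0,T;L^2(\Omega))$, and hence pointwise a.e.\ along a further subsequence. The key step is to pass to the limit in the nonlinear term: Lipschitz continuity of $\kappa$ and boundedness of $\Qbeta(\varphi)$ provide a uniform $L^\infty$-bound and a.e.\ convergence of $\Qbeta(\varphi)\,\kappa(u_n)$, so dominated convergence upgrades this to convergence in $L^2(\Omega_T)$, and the limit $u$ satisfies \eqref{eq:weakpara}. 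Continuity $u\in C([0,T];L^2(\Omega))$ then follows from the standard embedding for functions with $u\in L^2(0,T;W^{1,2}_0)$ and $\partial_t u\in L^2(0,T;W^{-1,2})$, and the initial condition $u(0)=u_0$ is recovered in the usual way.

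Uniqueness is a routine Gronwall argument: for two solutions $u_1,u_2$ with identical data, testing the difference equation with $u_1-u_2$, discarding the nonnegative term in $|\nabla(u_1-u_2)|^2$ and the nonnegative contribution from the monotonicity of $\kappa$, and using the Lipschitz bound $|\kappa(u_1)-\kappa(u_2)|\leq c_L|u_1-u_2|$ from Assumptions~\ref{ass:sigma} together with boundedness of $\Qbeta(\varphi)$ gives $\partial_t\|u_1-u_2\|_{L^2}^2\leq C\|u_1-u_2\|_{L^2}^2$, so that $u_1=u_2$. The main obstacle I anticipate is bookkeeping rather than conceptual: securing a positive lower bound $\alpha_\ast$ for $\Qalpha(\varphi)$ requires $\varphi$ to be bounded away from zero, which is not guaranteed by $\varphi\in L^\infty((0,T)\times\Omega)$ alone and is therefore absorbed into the phrase \emph{``depending on the coefficients of \eqref{eq:weakpara}''} in the statement; in the coupled problem this lower bound will be enforced through the admissibility constraint on $\varphi$.
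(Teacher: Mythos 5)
Your proposal follows essentially the same route as the paper's proof in Appendix C: a Galerkin approximation in a basis of $W^{1,2}_0(\Omega)$, solvability of the coefficient ODEs via Lipschitz continuity of the nonlinearity, the standard energy estimate tested with $u_n$ together with the dual estimate on $\partial_t u_n$, an Aubin--Lions compactness argument to pass to the limit in the term $\Qbeta(\varphi)\kappa(u_n)$ (the paper uses the global Lipschitz bound on $\kappa$ directly rather than dominated convergence, but this is immaterial), and uniqueness by Gr\"onwall. Your closing caveat about the implicit lower bound on $\Qalpha(\varphi)$ is also consistent with how the paper treats the coercivity constant, so the argument is correct as proposed.
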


\subsubsection{Parabolic regularity theory}
We next consider auxiliary results, formulated for a general parabolic equation
\begin{align}\label{eq:pargeneral}
	\partial_t u - \nabla \cdot (a \nabla u) + b u
	= 	\nabla \cdot f + g
\end{align}
with homogeneous Dirichlet boundary conditions on $u$ with initial data $u_0$,
where $a, b \in L^\infty(\Omega)$ and $\inf_\Omega a>0$.
The boundedness of the solution to \eqref{eq:pargeneral} is ensured by the following proposition.
\begin{thm}\label{th:ubound}
	Let $\Omega$ be a Lipschitz domain
	and let $f, g \in L^\infty(\Omega_T)$ and $u_0 \in L^\infty(\Omega)$. Then
	\begin{align}\label{eq:supbound}
		\sup_\Omega |u(t, \cdot)| \leq \sup_\Omega |u_0| + C \bl \int_0^t \| u(s,\cdot)\|_{L^2(\Omega)}^2 \D s \br ^{1/2},
	\end{align}
	where $C>0$ is independent of $t$ and $T$. 
\end{thm}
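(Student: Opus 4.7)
The plan is to establish the $L^\infty$-bound by a truncation argument in the spirit of Stampacchia and De~Giorgi, adapted to the present setting where the source of~\eqref{eq:pargeneral} consists of an $L^\infty$-function $g$ together with the distributional divergence of an $L^\infty$-vector field $f$. Since the estimate is symmetric in $u$ and $-u$, it suffices to bound $u$ from above by $\sup_\Omega |u_0| + C \| u\|_{L^2(0,t;L^2(\Omega))}$.

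Set $k_0 := \sup_\Omega |u_0|$. For each $k \geq k_0$, I would test the weak formulation of~\eqref{eq:pargeneral} with the truncation $(u-k)_+ \in L^2(0,T;W^{1,2}_0(\Omega))$, which vanishes at $t=0$ by the assumption on $u_0$ and on $\partial \Omega$. The term $\int_\Omega (\partial_t u)(u-k)_+ \di x$ gives $\tfrac12 \tfrac{\D}{\D t} \|(u-k)_+\|_{L^2(\Omega)}^2$; the principal part yields the coercive contribution $\int_\Omega a |\nabla (u-k)_+|^2 \di x \geq a_0 \|\nabla(u-k)_+\|_{L^2}^2$; and integration by parts together with Young's inequality with a small parameter $\epsilon$ yields
\[
    \left| \int_\Omega f \cdot \nabla (u-k)_+ \di x \right| \leq \epsilon \int_\Omega |\nabla (u-k)_+|^2 \di x + C_\epsilon \|f\|_{L^\infty(\Omega_T)}^2 |A_k(s)|,
\]
where $A_k(s) = \{x \in \Omega : u(s,x) > k\}$. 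The $g$-contribution is estimated by $\|g\|_{L^\infty}|A_k(s)|^{1/2}\|(u-k)_+\|_{L^2}$, while $\int_\Omega b u (u-k)_+ \di x$ splits as $\int_\Omega b (u-k)_+^2 \di x + k \int_\Omega b (u-k)_+ \di x$, the linear-in-$k$ piece being absorbed via Cauchy-Schwarz into $|A_k(s)|$ and $\|(u-k)_+\|_{L^2}$.

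Choosing $\epsilon$ so that the gradient is absorbed into the coercive part and integrating in time produces an energy inequality of the form
\[
    \|(u-k)_+(t,\cdot)\|_{L^2}^2 + c \int_0^t \|\nabla (u-k)_+(s,\cdot)\|_{L^2}^2 \D s \leq C \int_0^t \bigl( \|(u-k)_+(s,\cdot)\|_{L^2}^2 + (1+k^2) |A_k(s)| \bigr) \D s.
\]
Gr\"onwall's inequality removes the $\|(u-k)_+\|_{L^2}^2$-term, and the parabolic Gagliardo-Nirenberg-Sobolev embedding combined with the resulting $L^\infty(L^2) \cap L^2(H^1)$-control of $(u-k)_+$ gives a superlinear recurrence $\psi(h) \leq M(h-k)^{-\alpha} \psi(k)^\beta$ with $\beta > 1$ for $\psi(k) := \int_0^t |A_k(s)| \D s$. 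Chebyshev yields $\psi(k_0) \leq k_0^{-2} \|u\|_{L^2(0,t;L^2(\Omega))}^2$, so Stampacchia's iteration lemma forces $\psi(k) = 0$ for $k \geq k_0 + C \|u\|_{L^2(0,t;L^2(\Omega))}$. Applying the same reasoning to $-u$ completes the argument.

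The main obstacle I anticipate is bookkeeping of constants: ensuring that $C$ depends only on $\Omega$, on the upper and lower bounds of $a$, on $\|b\|_{L^\infty}$, $\|f\|_{L^\infty(\Omega_T)}$, $\|g\|_{L^\infty(\Omega_T)}$, but \emph{not} on $t$ or $T$, requires that no factor $e^{CT}$ survives the Gr\"onwall step, which in turn constrains the choice of $\epsilon$. Moreover, the linear appearance of $\|u\|_{L^2(0,t;L^2)}$ on the right-hand side forces the iteration to be calibrated so that the threshold produced by Stampacchia's lemma is exactly $k_0 + O(\|u\|_{L^2(0,t;L^2)})$ rather than a more general nonlinear expression, which is the delicate step in matching the precise form of~\eqref{eq:supbound}.
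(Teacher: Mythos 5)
Your proposal does not follow the paper's route: the paper's entire proof of Theorem~\ref{th:ubound} consists of invoking \cite[Theorem V.3.2]{DiBenedetto1993} and specializing its structure parameters, so what you propose is in effect a re-derivation of the cited result. The De~Giorgi--Stampacchia truncation is indeed the machinery behind that theorem, and your skeleton (testing with $(u-k)_+$ for $k \geq k_0 = \sup_\Omega|u_0|$, the energy inequality, the parabolic embedding of $L^\infty(0,t;L^2)\cap L^2(0,t;W^{1,2}_0)$ into $L^{2(d+2)/d}(\Omega_t)$, a level-set recurrence) is the correct one in outline.

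The decisive steps, however, are exactly the ones you leave open, and they are not bookkeeping. First, the recurrence one actually obtains is $\psi(h)\leq M_k\,(h-k)^{-2}\psi(k)^{\beta}$ with $\beta=(d+4)/(d+2)$, so Stampacchia's lemma produces a jump $d_* \sim M^{1/2}\psi(k_0)^{(\beta-1)/2} = M^{1/2}\psi(k_0)^{1/(d+2)}$; combined with Chebyshev, $\psi(k_0)\leq k_0^{-2}\Norm[L^2(0,t;L^2(\Omega))]{u}^2$, this gives a threshold proportional to $\Norm[L^2(0,t;L^2(\Omega))]{u}^{2/(d+2)}$ (and singular as $k_0\to 0$), not the linear dependence in \eqref{eq:supbound}. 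Second, your iteration constant grows like $1+k^2$ with the level (from the $bu$- and $g$-terms), so the basic Stampacchia lemma with a level-independent constant does not apply directly; one must fix the target level in advance and control $M_k$ over the whole range, which further entangles the calibration. Third, ``Gr\"onwall removes the $\Norm[L^2(\Omega)]{(u-k)_+}^2$ term'' produces precisely the factor $e^{Ct}$ that the claimed $t$- and $T$-independence of $C$ forbids; that term must instead be kept inside the iterated quantity and absorbed by interpolation. In fact, the literal target is unattainable: for $u_0=0$, $f=0$, $g\equiv 1$, $a\equiv 1$, $b=0$ one has $\sup_\Omega u(t,\cdot)\sim t$ while the right-hand side of \eqref{eq:supbound} is $O(t^{3/2})$ as $t\to 0$, so any correct version of the bound (including the one in \cite{DiBenedetto1993}, and the way the paper later uses it) must carry an additive term in the data $f$, $g$. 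This explains why the ``delicate calibration'' you anticipate cannot be made to close as stated; if you want a self-contained argument, follow the proof of \cite[Theorem V.3.2]{DiBenedetto1993} and record the additive data terms it actually yields.
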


\begin{proof}
	Specializing \cite[Theorem V.3.2]{DiBenedetto1993} to our setting, we get the desired result by choosing the parameters in~\cite{DiBenedetto1993} in the notation used there as follows:
	\begin{center}
		\begin{tabular}{lll}
			$p = 2$, & $\delta = 2$, & $\kappa_0 = 1$, \\
			$C_0 = \frac{1}{2} \mathrm{ess \, inf} \, a$, & $c_0 = 0$, & $\varphi_0 = \frac{1}{2 a} \, \AV{f}^2$, \\
			$C_1 = \mathrm{ess \, sup} \, a$, & $c_1 = 0$, & $\varphi_1 = \AV{f}$, \\
			$C_2 = 0$, & $c_2 = \mathrm{ess \, sup} \AV{b}$, & $\varphi_2 = \AV{g}$.
		\end{tabular}
	\end{center}
	Hence, $C$ depends on $d$ and the essential bounds of $a$, $b$, $f$ and $g$. 
\end{proof}

On the partition $\Omega^j$, $j=1,\ldots,M$, defined in \eqref{eq:partition}, we introduce parabolic Hölder spaces $C^{k,\gamma}_\mathrm{par}(\overline{\Omega}_T^j)$ for $k \in \{0,1\}$ and $\gamma \in (0,1)$ endowed with the parabolic space-time norm defined in~\cite{Ladyzenskaja1968},
\begin{equation}\label{eq:holder}
	\begin{aligned}
		\Norm[C^{0,\gamma}_\mathrm{par}(\overline{\Omega}^j_T)]{u}
		= &\sup_{\overline{\Omega}^j_T} |u|+ \sup_{\substack{t,x_1,x_2\\x_1 \neq x_2}} \frac{\AV{ u(t,x_1) - u(t,x_2)}}{\AV{x_1 - x_2}^\gamma} + \sup_{\substack{t_1,t_2,x\\t_1 \neq t_2}} \frac{\AV{ u(t_1,x) - u(t_2,x)}}{\AV{t_1 - t_2}^{\frac{\gamma}{2}}} ,\\
		\Norm[C^{1,\gamma}_\mathrm{par}(\overline{\Omega}^j_T)]{u}
		= &\sup_{\overline{\Omega}^j_T} |u| +
		\sup_{\overline{\Omega}^j_T} |\nabla u| + \sup_{\substack{t,x_1,x_2\\x_1 \neq x_2}} \frac{\AV{\nabla u(t,x_1) - \nabla u(t,x_2)}}{\AV{x_1 - x_2}^\gamma}\\
		&+ \sup_{\substack{t_1,t_2,x\\t_1 \neq t_2}} \frac{\AV{u(t_1,x) - u(t_2,x)}}{\AV{t_1 - t_2}^{\frac{1 + \gamma}{2}}} 
		+ \sup_{\substack{t_1,t_2,x\\t_1 \neq t_2}} \frac{\AV{\nabla u(t_1,x) - \nabla u(t_2,x)}}{\AV{t_1 - t_2}^{\frac{\gamma}{2}}} .
	\end{aligned}
\end{equation}
Assuming that $a$ in \eqref{eq:pargeneral} satisfies $a \in C^{0,\gamma}_\mathrm{par}(\overline{\Omega}_T^j)$ for $j=1,\ldots,M$, we obtain the following bound for the weak solution $u\in C^{1,\gamma}_\mathrm{par}(\overline{\Omega}_T^j)$ of \eqref{eq:pargeneral}.

\begin{thm}\label{th:parapiecewholder}
	Let $\Omega^j$ for $j=1,\ldots,M$, and hence also $\Omega$, have $C^{1,\mu}$-boundaries with a $\mu>0$, let $f \in C^{0,\gamma}_\mathrm{par}(\overline{\Omega}_T^j)$ for $j=1,\ldots,M$ with a $\gamma \in (0,\mu/(1+\mu)]$. If $u$ is a weak solution to \eqref{eq:pargeneral} with $u_0 \in C^{1,\gamma}(\overline{\Omega}^j)$ for $j=1,\ldots,M$ and homogeneous Dirichlet boundary conditions on $u$, then
	\begin{align}\label{eq:piecewiseest}
		\sum_{j=1}^{M} \Norm[C^{1,\gamma}_\mathrm{par}(\overline{\Omega}_T^j)]{u}
		&\leq C \Biggl( \Norm[L^\infty(\Omega_T)]{u} + \Norm[L^\infty(\Omega_T)]{g} + \sum_{j=1}^{M} \Norm[C^{0,\gamma}_\mathrm{par}(\overline{\Omega}_T^j)]{f}  + 
		\Norm[C^{1,\gamma}_\mathrm{par}(\overline{\Omega}_T^j)]{u_0}\Biggr) ,
	\end{align}
	where $C$ depends on $d$, $M$, $\gamma$, $\mu$, and on $\Omega^j$ and $\Norm[C^{0,\gamma}_\mathrm{par}(\overline\Omega_T^j)]{a}$ for $j=1,\ldots,M$. 
\end{thm}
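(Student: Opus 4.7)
The plan is to combine standard parabolic Schauder regularity in the interior of each $\Omega^j$ with boundary Schauder estimates at $\partial\Omega$ and classical transmission regularity across the interior interfaces $\partial\Omega^j \cap \partial\Omega^k$, and then to collect the resulting local estimates via a finite covering of each $\overline{\Omega}^j_T$. As a preliminary reduction, Theorem~\ref{th:ubound} applied to $u$ yields $u \in L^\infty(\Omega_T)$ with an explicit bound in terms of $\|u\|_{L^2}$ and $\|u_0\|_{L^\infty}$; combining this with the further $L^2$ estimate on $u$ from Theorem~\ref{th:exparabolic} lets us absorb the lower-order term $bu$ into the source, so it suffices to prove the estimate with $g$ replaced by $g' := g - bu \in L^\infty(\Omega_T)$ and with the $L^\infty$ norm of $u$ absorbed into the right-hand side.

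For a space-time point $(t_0,x_0)$ with a small parabolic neighborhood contained in $\Omega^j_T$, the equation has $C^{0,\gamma}_\mathrm{par}$ principal coefficient $a$ and $C^{0,\gamma}_\mathrm{par}$ flux $f$; interior parabolic Schauder estimates in divergence form, as in~\cite{Ladyzenskaja1968}, provide local $C^{1,\gamma}_\mathrm{par}$ control of $u$ in terms of $\|u\|_{L^\infty}$, $\|f\|_{C^{0,\gamma}_\mathrm{par}}$, $\|g'\|_{L^\infty}$ and $\|a\|_{C^{0,\gamma}_\mathrm{par}}$. For $x_0 \in \partial\Omega \subset \partial\Omega^M$, we locally flatten $\partial\Omega$ by a $C^{1,\mu}$ diffeomorphism; parabolic boundary Schauder estimates with homogeneous Dirichlet data, combined with the $C^{1,\gamma}(\overline{\Omega}^M)$ regularity of $u_0$ and its compatibility at $t=0$, yield $C^{1,\gamma}_\mathrm{par}$ estimates up to the boundary. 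The restriction $\gamma \leq \mu/(1+\mu)$ is imposed here by the regularity loss in the flattening (a $C^{1,\mu}$ chart takes $C^{0,\gamma}_\mathrm{par}$ functions into themselves only for this range of $\gamma$).

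The main step is the analysis near an interior interface point $x_0 \in \Sigma := \partial\Omega^j \cap \partial\Omega^k$, where $a$ and $f$ generally jump. Testing the weak formulation~\eqref{eq:weakpara} against functions supported across $\Sigma$ shows that $u$ satisfies the transmission conditions
\[
  [u] = 0, \qquad [(a\nabla u + f)\cdot \nu] = 0 \quad \text{on } \Sigma,
\]
where $\nu$ is a unit normal to $\Sigma$ and $[\,\cdot\,]$ denotes the jump. Flattening $\Sigma$ locally via the $C^{1,\mu}$ regularity of $\partial\Omega^j$, the problem is reduced to a parabolic transmission problem on a cylinder split by $\{x_d = 0\}$, with coefficients and data that are $C^{0,\gamma}_\mathrm{par}$ on each half. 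One-sided $C^{1,\gamma}_\mathrm{par}$ estimates then follow from the classical transmission regularity theory for divergence-form parabolic equations, again with the same constraint $\gamma \leq \mu/(1+\mu)$ forced by the flattening diffeomorphism. I expect this transmission step to be the main obstacle, as it is the only part where the jump structure of $a$ and $f$ is genuinely used and where one has to quote (or verify) a nontrivial piece of regularity theory; the interior and boundary cases are routine Schauder arguments.

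Finally, a finite covering of each $\overline{\Omega}^j_T$ by neighborhoods of the three types (interior, boundary, and transmission), together with summation over $j = 1,\ldots,M$ and a standard absorption of the $L^\infty(\Omega_T)$ contribution of $u$ on the right-hand side via the bound already obtained from Theorem~\ref{th:ubound}, produces the piecewise estimate~\eqref{eq:piecewiseest} with a constant depending on $d$, $M$, $\gamma$, $\mu$, the geometry of the $\Omega^j$, and the $C^{0,\gamma}_\mathrm{par}$ norms of $a$ on each $\overline{\Omega}^j_T$.
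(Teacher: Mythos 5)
Your strategy is sound and, in substance, coincides with the paper's: the entire theorem rests on a piecewise parabolic Schauder estimate across $C^{1,\mu}$ interfaces for divergence-form equations with piecewise $C^{0,\gamma}_\mathrm{par}$ leading coefficient and flux. The paper obtains this in one stroke by citing the global result \cite[Thm.~7.1]{Dong2021}, after which it only needs to (a) absorb $bu$ into the right-hand side, (b) translate between its norms $\Norm[C^{k,\gamma}_\mathrm{par}(\overline{\Omega}^j_T)]{\cdot}$ and the seminorms $|\cdot|_{(k+\gamma)/2,k+\gamma}$ used in that reference (Appendix~\ref{sec:equiv}), and (c) reduce to vanishing initial data. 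You instead sketch how such an estimate would be proved from scratch: covering by interior, outer-boundary and interface neighborhoods, flattening, and one-sided estimates for the flattened transmission problem. That is legitimate — it is essentially the architecture of the cited theorem's proof — but note that the step you defer to ``classical transmission regularity theory'' is not classical: for merely piecewise H\"older coefficients and $C^{1,\mu}$ interfaces, the parabolic piecewise Schauder estimate is precisely the recent result the paper cites (the elliptic analogue being \cite{Li2000} and Li--Nirenberg), so your argument is complete only modulo the same reference you would have to quote anyway. Your identification of the transmission conditions $[u]=0$ and $[(a\nabla u+f)\cdot\nu]=0$, and of $\gamma\le\mu/(1+\mu)$ as the exponent forced by the $C^{1,\mu}$ interface regularity, is correct.

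One concrete omission: you never reduce to vanishing initial data, and you invoke $u_0$ only in the outer-boundary case. The norm comparisons between $\Norm[C^{1,\gamma}_\mathrm{par}(\overline{\Omega}^j_T)]{\cdot}$ and $|\cdot|_{(1+\gamma)/2,1+\gamma}$ used to close the estimate (Appendix~\ref{sec:equiv}) are valid only for functions vanishing at $t=0$. The paper therefore writes $u=v+u_0$ with the constant-in-time continuation of $u_0$, so that $v$ has zero initial data and the extra term $\nabla\cdot(a\nabla u_0)$ is absorbed into the flux $f$; this is exactly where the hypothesis $u_0\in C^{1,\gamma}(\overline{\Omega}^j)$ enters. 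You should perform this subtraction globally before the covering argument rather than treating $u_0$ as a boundary-layer issue.
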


\begin{proof}
	We first assume $u_0 \equiv 0$.
	Utilizing~\cite[Theorem~7.1]{Dong2021} and absorbing $b u$ into the right-hand side, we only have to apply the norm equivalences proved in Appendix~\ref{sec:equiv}, which hold for vanishing initial data. Note furthermore that $\Norm[L^\infty(\Omega_T)]{u}$ is bounded due to Theorem~\ref{th:ubound}.

	For nonvanishing $u_0$, we use its constant continuation in time and consider 
	\begin{align*}
		\partial_t v - \nabla \cdot (a \nabla (v+u_0)) + b \, (v+u_0)
		= \nabla \cdot f + g
	\end{align*}
	with vanishing initial data on $v = u - u_0$. The additional terms $\nabla \cdot a \nabla u_0$ and $b \, (v+u_0) \in L^\infty(\Omega_T)$ can be absorbed into the right-hand sides as before.
\end{proof}

\subsubsection{A parabolic Lipschitz estimate}

	By applying Theorem~\ref{th:ubound} and~\ref{th:parapiecewholder} to~\eqref{eq:weakpara} we get that $u \in C^{1,\gamma}_\mathrm{par}(\overline{\Omega}_T^j)$ if $\varphi \in C^{0,\gamma}_\mathrm{par}(\overline{\Omega}_T^j)$ for $j = 1,\ldots,M$.
Next, we show a Lipschitz bound for $u$.

\begin{lem}\label{lem:LipschitzParaPiecw}
	Assume that $\Omega^j$ has a $C^{1,\mu}$-boundary with $\mu>0$,
	Let \REV{$\epsilon >0$, $R \in \R$ and $\varphi_1, \varphi_2\in C^{0,\gamma}_\mathrm{par}(\overline{\Omega}_T^j)$ with $\inf_\Omega\varphi_i \geq \epsilon > 0$ and $\sum_{j=1}^{M} \Norm[C^{0,\gamma}_\mathrm{par}(\overline{\Omega}_T^j)]{\varphi_i} \leq R$ for $i = 1,2$,}
%	$\varphi_1,\varphi_2 \in C^{0,\gamma}_\mathrm{par}(\overline{\Omega}_T^j)$, 
	$u(0,\cdot) = u_0 \in C^{1,\gamma}(\overline{\Omega}^j)$ for $j=1,\ldots,M$, $\gamma \in (0,\mu/(1+\mu)]$,
	$\alpha, \zeta \in C^2(\R^+)$,
	$\beta \in C^1(\R^+)$ and $\sigma \in C^{1,1}(\R)$. Let the unique weak solutions to~\eqref{eq:weakpara} for $\varphi_1$ and $\ \varphi_2$ be denoted by $u(\varphi_1)$ and $u(\varphi_2)$, respectively.
	Then
	\begin{align}\label{eq:pwlipschitzparabolic}
		\sum_{j=1}^{M} \Norm[C^{0,\gamma}_\mathrm{par}(\overline{\Omega}_T^j)]{u(\varphi_1) - u(\varphi_2)}
		\leq C T^{\gamma/2} \sum_{j=1}^{M} \Norm[C^{0,\gamma}_\mathrm{par}(\overline{\Omega}_T^j)]{\varphi_1 - \varphi_2}
	\end{align}
	and
	\begin{align}\label{eq:pwlipschitzparabolicsigma}
		\sum_{j=1}^{M} \Norm[C^{0,\gamma}_\mathrm{par}(\overline{\Omega}_T^j)]{\kappa(u(\varphi_1)) - \kappa(u(\varphi_2))}
		\leq C T^{\gamma/2} \sum_{j=1}^{M} \Norm[C^{0,\gamma}_\mathrm{par}(\overline{\Omega}_T^j)]{\varphi_1 - \varphi_2}
	\end{align}
%	with constants $C>0$ independent of $\varphi_1,\varphi_2$ and $T$.
	\REV{with constants $C>0$ that may depend on $\epsilon$ and $R$ but not on $\varphi_1,\varphi_2$ and $T$.}
\end{lem}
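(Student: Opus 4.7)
The strategy is to write the difference $w := u(\varphi_1) - u(\varphi_2)$ as the solution of a linear parabolic equation with zero initial data, apply the piecewise parabolic Hölder regularity Theorem~\ref{th:parapiecewholder} to $w$, and then exploit $w(0,\cdot) = \nabla w(0,\cdot) = 0$ through a time interpolation to extract the factor $T^{\gamma/2}$. The estimate \eqref{eq:pwlipschitzparabolicsigma} will then follow from \eqref{eq:pwlipschitzparabolic} together with the Lipschitz continuity of $\kappa$ and the Banach-algebra property of $C^{0,\gamma}_\mathrm{par}$.

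Subtracting \eqref{eq:weakpara} for $\varphi_1$ from that for $\varphi_2$, regrouping via $\Qalpha(\varphi_1)\nabla u_1 - \Qalpha(\varphi_2)\nabla u_2 = \Qalpha(\varphi_1)\nabla w + (\Qalpha(\varphi_1)-\Qalpha(\varphi_2))\nabla u_2$ and $\kappa(u_1) - \kappa(u_2) = \Delta_{\kappa,u_2}(u_1)\, w$ with $\Delta_{\kappa,y}$ as in \eqref{eq:Delta}, yields a linear parabolic problem of the form
\begin{equation*}
\partial_t w - \nabla \cdot (\Qalpha(\varphi_1)\nabla w) + \Qbeta(\varphi_1)\Delta_{\kappa,u_2}(u_1)\, w = \nabla \cdot F + G
\end{equation*}
with homogeneous Dirichlet conditions and $w(0,\cdot) = 0$, where $F$ collects the differences $(\Qalpha(\varphi_1)-\Qalpha(\varphi_2))\nabla u_2$ and $\Qalpha(\varphi_1)\zeta(\varphi_1) - \Qalpha(\varphi_2)\zeta(\varphi_2)$, and $G = -(\Qbeta(\varphi_1)-\Qbeta(\varphi_2))\kappa(u_2)$. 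Using that $C^{0,\gamma}_\mathrm{par}(\overline\Omega_T^j)$ is a Banach algebra, that $\alpha,\zeta \in C^2$, $\beta \in C^1$, $\kappa \in C^{1,1}$, and that $u_2 \in C^{1,\gamma}_\mathrm{par}(\overline\Omega_T^j)$ by Theorem~\ref{th:parapiecewholder} applied to $u(\varphi_2)$, a Bochner-integral argument parallel to the one in Proposition~\ref{prp:LipschitzEll} yields
\begin{equation*}
\sum_{j=1}^M \Norm[C^{0,\gamma}_\mathrm{par}(\overline\Omega_T^j)]{F} + \Norm[L^\infty(\Omega_T)]{G} \leq C \sum_{j=1}^M \Norm[C^{0,\gamma}_\mathrm{par}(\overline\Omega_T^j)]{\varphi_1 - \varphi_2}
\end{equation*}
with $C$ depending only on $\epsilon$ and $R$. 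Applying Theorem~\ref{th:parapiecewholder} to the equation for $w$ then bounds $\sum_j \Norm[C^{1,\gamma}_\mathrm{par}(\overline\Omega_T^j)]{w}$ by a constant multiple of $\Norm[L^\infty(\Omega_T)]{w} + \sum_j \Norm[C^{0,\gamma}_\mathrm{par}(\overline\Omega_T^j)]{\varphi_1 - \varphi_2}$.

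To extract the $T^{\gamma/2}$ factor, I read off the two time seminorms in the definition \eqref{eq:holder} of $C^{1,\gamma}_\mathrm{par}$, using $w(0,\cdot) = \nabla w(0,\cdot) = 0$, to obtain
\begin{equation*}
\Norm[L^\infty(\Omega_T)]{w} \leq C\, T^{(1+\gamma)/2} \sum_j \Norm[C^{1,\gamma}_\mathrm{par}(\overline\Omega_T^j)]{w}, \qquad \Norm[L^\infty(\Omega_T)]{\nabla w} \leq C\, T^{\gamma/2} \sum_j \Norm[C^{1,\gamma}_\mathrm{par}(\overline\Omega_T^j)]{w}.
\end{equation*}
The first bound absorbs $\Norm[L^\infty(\Omega_T)]{w}$ on the right-hand side of the $C^{1,\gamma}_\mathrm{par}$-estimate for $T$ sufficiently small (iterating on subintervals otherwise). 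The spatial $\gamma$-seminorm of $w$ in $C^{0,\gamma}_\mathrm{par}(\overline\Omega_T^j)$ is then controlled by $\Norm[L^\infty]{\nabla w}\,\operatorname{diam}(\Omega^j)^{1-\gamma}$, and the time $\gamma/2$-seminorm of $w$ by $T^{1/2} \sum_j \Norm[C^{1,\gamma}_\mathrm{par}]{w} \leq T^{\gamma/2}\sum_j \Norm[C^{1,\gamma}_\mathrm{par}]{w}$ for $T \leq 1$. Summing these contributions gives $\sum_j \Norm[C^{0,\gamma}_\mathrm{par}(\overline\Omega_T^j)]{w} \leq C\,T^{\gamma/2} \sum_j \Norm[C^{1,\gamma}_\mathrm{par}(\overline\Omega_T^j)]{w}$, which combined with the previous bound proves \eqref{eq:pwlipschitzparabolic}.

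The main obstacle I foresee is the careful bookkeeping of the $T$-dependence: ensuring that the constants from Theorem~\ref{th:parapiecewholder} (which depend on $\Norm[C^{0,\gamma}_\mathrm{par}]{\Qalpha(\varphi_1)}$) as well as those from the Bochner-integral estimates of the coefficient differences are uniform in $\varphi_1,\varphi_2$ for fixed $\epsilon,R$ and in $T$ on a bounded range, and exploiting the two vanishing conditions $w(0,\cdot) = 0$ and $\nabla w(0,\cdot) = 0$ jointly so that the precise exponent $T^{\gamma/2}$ is recovered rather than a larger one.
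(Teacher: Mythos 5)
Your proposal is correct, but it takes a genuinely different and more elementary route than the paper's proof. The paper establishes Fr\'echet differentiability of $\varphi \mapsto u(\varphi)$ in the piecewise parabolic H\"older norms -- this is the bulk of its argument, requiring both difference equations \eqref{eq:pardiff} and \eqref{eq:parderiv}, the differentiability (not merely Lipschitz continuity) of the Nemytskii operators of $\Qalpha$, $\Qbeta$, $\zeta$ via \cite[Thm.~4]{Goebel1992}, and the convergence analysis of $\Delta_{\kappa,\uzero}$ -- and then converts the resulting operator-norm bound on $u'(\varphi)$ into a Lipschitz estimate through the Bochner integral representation \eqref{eq:integralrepell}. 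You instead work directly with the equation for $w = u(\varphi_1)-u(\varphi_2)$, which is precisely \eqref{eq:pardiff} with $\varphi+\dphi h$ and $\varphi$ replaced by $\varphi_1$ and $\varphi_2$; this requires only the Lipschitz continuity of the Nemytskii operators on $C^{0,\gamma}_\mathrm{par}$ (i.e.\ \cite[Thm.~3]{Goebel1992}, available since $\alpha,\zeta\in C^2$), one application of Theorem~\ref{th:parapiecewholder} to $u(\varphi_2)$ to control $\nabla u(\varphi_2)$ in the source term, one application to $w$, and the same extraction of the factor $T^{\gamma/2}$ from the vanishing initial data as in \eqref{eq:paraest}. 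The paper's heavier machinery buys the differentiability statement itself; for the Lemma as stated, your shortcut suffices and avoids the most delicate part of the paper's argument.

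Two points to tighten. First, the term $\Norm[L^\infty(\Omega_T)]{w}$ on the right-hand side of \eqref{eq:piecewiseest} must indeed be removed; your absorption via $\Norm[L^\infty(\Omega_T)]{w}\leq C\,T^{(1+\gamma)/2}\sum_j\Norm[C^{1,\gamma}_\mathrm{par}(\overline{\Omega}_T^j)]{w}$ works for small $T$, which is the regime in which the lemma is used in Proposition~\ref{prp:contr}; be aware that for large $T$ the subinterval iteration you mention destroys the vanishing-initial-data structure, so the uniform-in-$T$ constant really only comes for free on a bounded range of $T$ (a caveat the paper's own proof shares). Second, for \eqref{eq:pwlipschitzparabolicsigma} plain Lipschitz continuity of $\kappa$ does not control the H\"older seminorms of $\kappa(u_1)-\kappa(u_2)$; what you need -- and what your decomposition of the zeroth-order term already supplies -- is the identity $\kappa(u_1)-\kappa(u_2)=\Delta_{\kappa,u_2}(u_1)\,(u_1-u_2)$ with $\kappa\in C^{1,1}$, a uniform $C^{0,\gamma}_\mathrm{par}$-bound on $\Delta_{\kappa,u_2}(u_1)$ coming from Theorem~\ref{th:parapiecewholder}, and the Banach algebra property. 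Stated that way, it is a valid replacement for the paper's argument via $\Theta(s)=\kappa(u((1-s)\varphi_1+s\varphi_2))$.
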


\begin{proof}
By considering the linearized equation
\begin{multline}\label{eq:parabolic_linear_wh}
	\partial_t w_h  - \nabla \cdot ( \Qalpha (\varphi) \nabla w_h)  + \Qbeta(\varphi) \, \kappa'(u(\varphi)) w_h\\
	= \nabla \cdot ( \Qalpha'(\varphi) h (\nabla u(\varphi) + \zeta(\varphi)) + \Qalpha(\varphi) \, \zeta'(\varphi) \, h ) - \Qbeta'(\varphi) h \, \kappa(u(\varphi))
\end{multline}
for the candidate for the directional derivative $w_h = u'(\varphi)h$
with $h \in C^{0,\gamma}_\mathrm{par}(\Omega_T^j)$, we can apply Theorem~\ref{th:parapiecewholder} (resp.~\cite[Theorem~7.1]{Dong2021}) using that a uniformly bounded solution exists by Theorem~\ref{th:ubound} and $\nabla u(\varphi) \in C^{0,\gamma}_\mathrm{par}(\Omega_T^j)$. 

To show that our candidate $w_h$ is actually a Fr\'echet derivative, that is, with $\udphi := u(\varphi + \dphi h)$ we have
\begin{align*}
	\lim_{\dphi \to 0} \sum_{j=1}^{M} \Norm[C^{0,\gamma}_\mathrm{par}(\overline{\Omega}_T^j)]{\frac{\udphi - \uzero}{\dphi} - w_h} = 0
\end{align*}
where the convergence is uniform in $h$ normalized in $C^{0,\gamma}_\mathrm{par}(\Omega_T^j)$. Subtracting the respective equations, we obtain
\begin{multline}\label{eq:pardiff}
	\partial_t (\udphi - \uzero) - \nabla \cdot (\Qalpha(\varphi) \nabla(\udphi - \uzero)) + \Qbeta(\varphi) \Delta_{\kappa, \uzero}(\udphi) (\udphi - \uzero)\\
	\begin{split}
		=& \nabla \cdot ((\Qalpha(\varphi + \dphi h) - \Qalpha(\varphi)) \nabla \udphi) - \kappa(\udphi) (\Qbeta(\varphi + \dphi h) - \Qbeta(\varphi))\\
		&+ \nabla \cdot (\Qalpha(\varphi + \dphi h) \zeta(\varphi + \dphi h) - \Qalpha(\varphi) \zeta(\varphi))
	\end{split}
\end{multline}
and
\begin{multline}\label{eq:parderiv}
	\partial_t \bl \frac{\udphi - \uzero}{\dphi} - w_h \br - \nabla \cdot \bl \Qalpha(\varphi) \, \nabla\!\bl \frac{\udphi - \uzero}{\dphi} - w_h \br \br + \Qbeta(\varphi) \kappa'(\uzero) \bl \frac{\udphi - \uzero}{\dphi} - w_h \br\\
	\begin{split}
		=& \nabla \cdot \bl \bl \frac{\Qalpha(\varphi + \dphi h) - \Qalpha(\varphi)}{\dphi} - \Qalpha'(\varphi) h \br \nabla \udphi \br + \nabla \cdot (\Qalpha'(\varphi) h (\nabla \udphi - \nabla \uzero))\\
		&- \bl \frac{\Qbeta(\varphi + \dphi h) - \Qbeta(\varphi)}{\dphi} - \Qbeta'(\varphi) h \br \kappa(\udphi) - \Qbeta'(\varphi) h (\kappa(\udphi) - \kappa(\uzero))\\
		&+ \nabla \cdot \bl \frac{\Qalpha(\varphi + \dphi h) \zeta(\varphi + \dphi h) - \Qalpha(\varphi) \zeta(\varphi)}{\dphi} - \Qalpha'(\varphi) \zeta(\varphi) h - \Qalpha(\varphi) \zeta'(\varphi) h \br \\
		&- \Qbeta(\varphi) \frac{\udphi - \uzero}{\dphi} (\Delta_{\kappa, \uzero}(\udphi) - \kappa'(\uzero)).
	\end{split}
\end{multline}
Here, $\Delta_{\kappa,\uzero}(\udphi)$ is defined as in~\eqref{eq:Delta} and is used pointwise in~\eqref{eq:pardiff} and~\eqref{eq:parderiv}.
As before we have  $\kappa(\udphi)-\kappa(\uzero)=\Delta_{\kappa,\uzero}(\udphi) (\udphi-\uzero)$ where $\Delta_{\kappa,\uzero}$ is continuous at $\uzero$ and $\Delta_{\kappa,\uzero}(\uzero)=\kappa'(\uzero)$.

First, we apply Theorem~\ref{th:ubound} and Theorem~\ref{th:parapiecewholder} to~\eqref{eq:pardiff} to show that $\Norm[L^\infty(\Omega_T)]{\udphi - \uzero} \leq C \dphi$ and $\udphi \to \uzero$ in $C^{1,\gamma}_{\mathrm{par}}(\overline{\Omega}^j_T)$-norm uniformly with respect to $h$ normalized in $C^{0,\gamma}_\mathrm{par}(\Omega_T^j)$.
Note that on the right-hand side of~\eqref{eq:piecewiseest}, $\Norm[L^\infty]{\Qbeta(\varphi + \dphi h) - \Qbeta(\varphi)}\to 0$ follows from the Lipschitz continuity of $\Qbeta$ and Theorem~\ref{th:ubound}.
In order to show
\begin{align*}
	\lim_{\dphi \to 0} \sum_{j=1}^M \Norm[C^{0,\gamma}_\mathrm{par}(\overline{\Omega}_T^j)]{\Qalpha(\varphi + \dphi h) - \Qalpha(\varphi)} = 0,
\end{align*}
we use~\cite[Thm. 3]{Goebel1992} twice for space and time. This yields (local) Lipschitz continuity of $\Qalpha$ with respect to $\Norm[C^{0,\gamma}_\mathrm{par}(\overline{\Omega}_T^j)]{\cdot}$ and hence proves the desired result since the convergence of the other divergence term follows analogously, this time using the regularity of $\zeta$ as well.

Next we apply Theorem~\ref{th:parapiecewholder} to~\eqref{eq:parderiv} where, as before, only the terms in divergence form pose problems. In order to prove
\begin{align*}
	\lim_{\dphi \to 0} \sum_{j=1}^M \Norm[C^{0,\gamma}_\mathrm{par}(\overline{\Omega}_T^j)]{\frac{\Qalpha(\varphi + \dphi h) - \Qalpha(\varphi)}{\dphi} - \Qalpha'(\varphi) h} = 0,
\end{align*}
we apply~\cite[Thm. 4]{Goebel1992}, again twice for space and time, to prove Fr\'echet differentiability of $\Qalpha$ with respect to $\Norm[C^{0,\gamma}_\mathrm{par}(\overline{\Omega}_T^j)]{\cdot}$. This proves the desired result since the convergence of the other divergence term follows analogously using the regularity of $\zeta$.

Since we obtain
\begin{align*}
	\sum_{j=1}^{M} \Norm[C^{1,\gamma}_\mathrm{par}(\overline{\Omega}_T^j)]{w_h}
	&\leq C \Bigg( \Norm[L^\infty(\Omega_T)]{\Qbeta(\varphi) \kappa'(u(\varphi)) w_h + \Qbeta'(\varphi) \, h \, \kappa(u(\varphi))}\\
	&\qquad+ \sum_{j=1}^{M} \Norm[C^{0,\gamma}_\mathrm{par}(\overline{\Omega}_T^j)]{\Qalpha'(\varphi) h (\nabla u(\varphi) + \zeta(\varphi)) + \Qalpha(\varphi) \, \zeta'(\varphi) \, h} \Bigg)\\
	&\leq C \sum_{j=1}^{M} \Norm[C^{0,\gamma}_\mathrm{par}(\overline{\Omega}_T^j)]{h},
\end{align*}
by applying Theorem~\ref{th:ubound} and~\ref{th:parapiecewholder} to~\eqref{eq:parabolic_linear_wh} this yields an upper bound of $\Norm{u'(\varphi)}$ for the associated piecewise operator norm. Since $w_h(0,\cdot)$ vanishes identically,
\begin{align*}
	\sum_{j=1}^{M} \Norm[C^{0,\gamma}_\mathrm{par}(\overline{\Omega}_T^j)]{w_h}
	\leq \max\{2, C\} \, T^{\gamma/2} \sum_{j=1}^{M} \Norm[C^{1,\gamma}_\mathrm{par}(\overline{\Omega}_T^j)]{w_h}
\end{align*}
by estimate~\eqref{eq:paraest} for a constant $C$ depending on the diameter of $\Omega$.

Considering the integral representation~\eqref{eq:integralrepell}, we obtain
\begin{align*}
	\sum_{j=1}^{M} \Norm[C^{0,\gamma}_\mathrm{par}(\overline{\Omega}_T^j)]{u(\varphi_1) - u(\varphi_2)}
	&= \sum_{j=1}^{M} \Norm[C^{0,\gamma}_\mathrm{par}(\overline{\Omega}_T^j)]{\int_0^1 u'((1-s) \varphi_2 + s \varphi_1) (\varphi_1 - \varphi_2) \D s}\\
	&\leq \int_0^1 \sum_{j=1}^{M} \Norm[C^{0,\gamma}_\mathrm{par}(\overline{\Omega}_T^j)]{u'((1-s) \varphi_2 + s \varphi_1) (\varphi_1 - \varphi_2)} \D s\\
	&\leq C T^{\gamma/2} \sum_{j=1}^{M} \Norm[C^{0,\gamma}_\mathrm{par}(\overline{\Omega}_T^j)]{\varphi_1 - \varphi_2}
\end{align*}
for some constant $C>0$ independent of $\varphi_1,\varphi_2$ and $T$. With
$\Theta(s) = \kappa(u((1-s)\varphi + s \psi))$,
we can thus again estimate $\Theta'$ as in Proposition~\ref{prp:LipschitzEll}.
Since $\kappa \in C^{1,1}(\R)$, this proves \eqref{eq:pwlipschitzparabolicsigma}.
\end{proof}

\subsection{Coupled problem}\label{sec:coupledfull}
In this section we investigate existence and uniqueness of the full viscoelastic model~\eqref{eq:fullmildweak} by a similar approach as in Sections~\ref{sec:coupledviscous} and~\ref{sec:higherreg}, that is, by establishing a contraction property on an appropriately chosen set.
Note that existence and uniqueness of the weak solution to the parabolic problem \eqref{eq:u} are provided in Theorem \ref{th:exparabolic} and
for a pointwise bound of the solution, Theorem~\ref{th:ubound} can be applied.

\REV{Given $T$ and $R>\epsilon>0$, let}
\begin{equation}\label{eq:STpwdef}
	\begin{aligned}
		\REV{\CHI_T^\mathrm{pw} =\Biggl\{ \varphi\in C(\REV{[0,T]};L^\infty(\Omega))\colon} &\REV{\sum_{j=1}^{M} \Norm[C^{0,\gamma}_\mathrm{par}(\overline{\Omega}_T^j)]{\varphi} \leq R,}\\ &\REV{\inf_{t\in[0,T]} \varphi(t,x) \geq \epsilon \text{ for a.e.~} x\in \Omega\Biggr\},}
	\end{aligned}
\end{equation}
endowed with the metric induced by the norm
\begin{align*}
	\Norm[\mathrm{pw}]{\varphi} = \sum_{j=1}^{M} \Norm[C^{0,\gamma}_\mathrm{par}(\overline{\Omega}_T^j)]{\varphi}.
\end{align*}
%for $\varphi\in  \CHI_T^\mathrm{pw}$.
\REV{We define $\mathcal{P}$ as the nonlinear operator which maps $\varphi$ to $u(\varphi)$, that is,}
\begin{align}\label{eq:Pdef}
	\REV{\mathcal{P}\colon L^\infty((0,T)\times\Omega) \to C([0,T]; L^2(\Omega))\cap L^2(0,T; W^{1,2}_0(\Omega)),\quad \varphi \mapsto u(\varphi),}
\end{align}
\REV{where $u(\varphi)$ is the solution of~\eqref{eq:u}. For all $t \in [0,T]$, we define}
\begin{align}\label{eq:xioperator}
	\REV{\Xi[\varphi](t,\cdot)
	= \varphi_0 + Qu_0 - Q\mathcal{P}(\varphi)(t,\cdot) - \int_0^t \beta(\varphi(s,\cdot)) \, \kappa(\mathcal{P}(\varphi)(s,\cdot)) \D s \,.}
\end{align}
\REV{Here, $\varphi_0\in C^{0,\gamma}(\overline{\Omega}^j), u_0 \in C^{1,\gamma}(\overline{\Omega}^j)$ denote the initial conditions of $\varphi,u$, respectively.}
By the following proposition, \REV{$\Xi$ satisfies a contraction property on $\CHI_T^\mathrm{pw}$.}

\begin{prp}\label{prp:contr}
	Suppose that $\Omega^j$ has a $C^{1,\mu}$-boundary with $\mu>0$, $\varphi_0 \in C^{0,\gamma}(\overline{\Omega}^j)$, %
	$u(0,\cdot) = u_0 \in C^{1,\gamma}(\overline{\Omega}^j)$ for $j=1,\ldots,M$, $\gamma \in (0,\mu/(1+\mu)]$, $\alpha, \zeta \in C^2(\R^+)$, $\beta \in C^1(\R^+)$ and $\kappa \in C^{1,1}$.
%	If in addition
	\REV{If in addition to Assumptions \ref{ass:sigma} and \ref{ass:alphabeta}}
	\begin{align*}
		\sum_{j=1}^{M} \Norm[C^{0,\gamma}(\overline{\Omega}^j)]{\varphi_0} < R
	\end{align*}
	and $\inf_\Omega \varphi_0 > \epsilon$,
	there exists a $\tilde T>0$\REV{, depending on $R$, $\epsilon$, $d$, $\varphi_0$, $u_0$,} such that $\Xi$ defines a contraction on $\CHI_T^\mathrm{pw}$ with respect to $\Norm[\mathrm{pw}]{\cdot}$ for all $T\in (0,\Tilde{T})$.
\end{prp}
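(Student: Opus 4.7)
The strategy is to adapt the contraction argument of Proposition \ref{prp:localtimeexistence}, but now with respect to the piecewise parabolic Hölder norm $\Norm[\mathrm{pw}]{\cdot}$, leveraging the parabolic regularity theory of Theorem \ref{th:parapiecewholder} and the Lipschitz bounds in Lemma \ref{lem:LipschitzParaPiecw}. The plan is to verify, for sufficiently small $T$: (i) self-mapping, namely $\Xi[\varphi] \in \CHI_T^\mathrm{pw}$ for all $\varphi \in \CHI_T^\mathrm{pw}$, and (ii) contraction in the metric induced by $\Norm[\mathrm{pw}]{\cdot}$.

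For (i), I would rewrite
\[
  \Xi[\varphi](t,\cdot) = \varphi_0 - Q\bigl(\mathcal{P}(\varphi)(t,\cdot) - u_0\bigr) - \int_0^t \beta(\varphi)\,\kappa(\mathcal{P}(\varphi)) \D s,
\]
and estimate the last two terms in $\Norm[\mathrm{pw}]{\cdot}$. Theorem \ref{th:parapiecewholder} gives $u := \mathcal{P}(\varphi) \in C^{1,\gamma}_\mathrm{par}(\overline{\Omega}_T^j)$ with a bound depending only on $\Norm[\mathrm{pw}]{\varphi} \leq R$, $\inf \varphi \geq \epsilon$ and the initial data. Since $v := u - u_0$ vanishes at $t=0$, the interpolation estimate \eqref{eq:paraest} used in the proof of Lemma \ref{lem:LipschitzParaPiecw} yields $\sum_j \Norm[C^{0,\gamma}_\mathrm{par}(\overline{\Omega}_T^j)]{u - u_0} \leq C T^{\gamma/2}$. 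For the time integral, using that $C^{0,\gamma}_\mathrm{par}(\overline{\Omega}_T^j)$ is a Banach algebra together with the Lipschitz continuity of $\beta$, $\kappa$ and the uniform bounds from Theorem \ref{th:ubound}, the integrand has piecewise parabolic Hölder norm uniformly bounded; a direct computation of the parabolic Hölder norm of $\int_0^t f(s,\cdot) \D s$ in terms of the supremum and the Hölder seminorms of $f$ then produces a prefactor of order $T^{\min(1,1-\gamma/2)}$. Combined with the assumption $\sum_j \Norm[C^{0,\gamma}(\overline{\Omega}^j)]{\varphi_0} < R$ and $\inf_\Omega \varphi_0 > \epsilon$, together with the (stronger) $L^\infty$ bound used in Proposition \ref{prp:localtimeexistence}, this gives both the norm bound $\leq R$ and the pointwise lower bound $\geq \epsilon$ for $T$ sufficiently small.

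For (ii), given $\varphi_1, \varphi_2 \in \CHI_T^\mathrm{pw}$ with respective parabolic solutions $u_i = \mathcal{P}(\varphi_i)$, I would write
\[
  \Xi[\varphi_1] - \Xi[\varphi_2] = -Q\bigl(u_1 - u_2\bigr) - \int_0^t \bigl([\beta(\varphi_1) - \beta(\varphi_2)]\kappa(u_1) + \beta(\varphi_2)[\kappa(u_1)-\kappa(u_2)]\bigr) \D s.
\]
Lemma \ref{lem:LipschitzParaPiecw} provides $\Norm[\mathrm{pw}]{u_1 - u_2} \leq C T^{\gamma/2} \Norm[\mathrm{pw}]{\varphi_1 - \varphi_2}$ and the analogous bound \eqref{eq:pwlipschitzparabolicsigma} for $\kappa(u_1) - \kappa(u_2)$. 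Using again the Banach algebra structure of $C^{0,\gamma}_\mathrm{par}(\overline{\Omega}_T^j)$ and the Lipschitz continuity of $\beta$ on $[\epsilon,R]$, the integrand has $\Norm[\mathrm{pw}]{\cdot} \leq C \Norm[\mathrm{pw}]{\varphi_1 - \varphi_2}$, and time integration contributes an additional factor of order $T^{\min(1,1-\gamma/2)}$. Collecting,
\[
  \Norm[\mathrm{pw}]{\Xi[\varphi_1] - \Xi[\varphi_2]} \leq \bigl(C_1 Q T^{\gamma/2} + C_2 T^{\min(1,1-\gamma/2)}\bigr)\, \Norm[\mathrm{pw}]{\varphi_1 - \varphi_2},
\]
and choosing $\tilde T>0$ so that the prefactor is strictly less than $1$ for all $T \in (0,\tilde T)$ yields the contraction.

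The main obstacle is obtaining the $T^{\gamma/2}$ smallness for the term $Q(u_1 - u_2)$: because $Q$ is a fixed constant, it cannot be absorbed by any other factor, and without Lemma \ref{lem:LipschitzParaPiecw} one would only get a Lipschitz bound of $O(1)$ and no contraction. The rest of the argument is bookkeeping of norms and powers of $T$, where one must be somewhat careful that both the spatial Hölder and the time Hölder parts of the parabolic norm of $\int_0^t f\D s$ decay with $T$; this is guaranteed since $\gamma < 1$ makes $1 - \gamma/2 > \gamma/2 > 0$.
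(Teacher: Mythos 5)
Your proposal is correct and follows essentially the same route as the paper's proof: self-mapping via Theorem~\ref{th:parapiecewholder} combined with the vanishing-initial-data estimate \eqref{eq:paraest} for $\mathcal{P}(\varphi)-u_0$ and the time-integral estimate \eqref{eq:intholder}, the lower bound via the weaker sup-norm, and the contraction via Lemma~\ref{lem:LipschitzParaPiecw} together with the product/Lipschitz estimates as in \eqref{eq:parabolic_lipschitz_help}. Your prefactor $C_1QT^{\gamma/2}+C_2T^{\min(1,1-\gamma/2)}$ coincides with the paper's $C_1t^{\gamma/2}+C_2t^{1-\gamma/2}$ since $\gamma<1$, and your remark on why the $T^{\gamma/2}$ gain for $Q(u_1-u_2)$ is indispensable is exactly the point of Lemma~\ref{lem:LipschitzParaPiecw}.
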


\begin{proof}
We proceed as in the proof of Proposition~\ref{prp:localtimeexistence}. Note that $\sum_{j=1}^{M} \Norm[C^{0,\gamma}_\mathrm{par}(\overline{\Omega}_T^j)]{\varphi_0} < R$ for the constant continuation of $\varphi_0$ in time.
Furthermore, for the constant continuation of $u_0$ in time, note that
\begin{align*}
	\sum_{j=1}^{M} \Norm[C^{0,\gamma}_\mathrm{par}(\overline{\Omega}_T^j)]{u_0 - \mathcal{P}(\varphi)}
	\leq C T^{\gamma/2}
\end{align*}
by~\eqref{eq:paraest} since $\mathcal{P}(\varphi) \in C^{1,\gamma}_\mathrm{par}(\overline{\Omega}_T^j)$, and 
\begin{align*}
	\sum_{j=1}^{M} \Norm[C^{0,\gamma}_\mathrm{par}(\overline{\Omega}_T^j)]{\beta(\varphi) \, \kappa(\mathcal{P}(\varphi))}
	\leq C
\end{align*}
uniformly by Theorem~\ref{th:parapiecewholder}. Hence, using~\eqref{eq:intholder}, we can choose $T>0$ sufficiently small so that
\begin{align*}
	\sum_{j=1}^{M} \Norm[C^{0,\gamma}_\mathrm{par}(\overline{\Omega}_T^j)]{\Xi[\varphi]}\leq R
\end{align*}
holds. The lower bound of $\Xi[\varphi]$ follows
by repeating the previous estimates with the weaker $C(\REV{[0,T]};L^\infty(\Omega))$-norm which yields
\begin{align*}
	\Norm[C(\REV{[0,T]};L^\infty(\Omega))]{\Xi[\varphi] - \varphi_0}
	\leq C T^{\gamma/2}
\end{align*}
and hence
\begin{align*}
	\Xi[\varphi] \geq \varphi_0 - C T^{\gamma/2} 
\end{align*}
for all $t \in [0,T]$ a.e.\ in $\Omega$ similar to Proposition~\ref{prp:localtimeexistence}.
The contraction property can be shown by considering
\begin{align*}
	\sum_{j=1}^{M} \Norm[C^{0,\gamma}_\mathrm{par}(\overline{\Omega}_T^j)]{\mathcal{P}(\varphi_1) - \mathcal{P}(\varphi_2)}
	\leq C T^{\gamma/2} \sum_{j=1}^{M} \Norm[C^{0,\gamma}_\mathrm{par}(\overline{\Omega}_T^j)]{\varphi_1 - \varphi_2}
\end{align*}
as well as
\begin{align}\label{eq:parabolic_lipschitz_help}
\begin{split}
	\sum_{j=1}^{M} &\Norm[C^{0,\gamma}_\mathrm{par}(\overline{\Omega}_T^j)]{\int_0^t \beta(\varphi_1(s,\cdot)) \, \kappa(\mathcal{P}(\varphi_1)(s,\cdot)) - \beta(\varphi_2(s,\cdot)) \, \kappa(\mathcal{P}(\varphi_2)(s,\cdot)) \D s}\\
	&\leq C T^{1-\gamma/2} \sum_{j=1}^{M} \Norm[C^{0,\gamma}_\mathrm{par}(\overline{\Omega}_T^j)]{\beta(\varphi_1) \, \kappa(\mathcal{P}(\varphi_1)) - \beta(\varphi_2) \, \kappa(\mathcal{P}(\varphi_2))}\\
	&\leq T^{1-\gamma/2} \sum_{j=1}^{M} C_1 \Norm[C^{0,\gamma}_\mathrm{par}(\overline{\Omega}_T^j)]{\beta(\varphi_1) - \beta(\varphi_2)}
	+ C_2 \Norm[C^{0,\gamma}_\mathrm{par}(\overline{\Omega}_T^j)]{\kappa(\mathcal{P}(\varphi_1)) - \kappa(\mathcal{P}(\varphi_2))}\\
	&\leq T^{1-\gamma/2} (C_1 + C_2 T^{\gamma/2}) \sum_{j=1}^{M} \Norm[C^{0,\gamma}_\mathrm{par}(\overline{\Omega}_T^j)]{\varphi_1 - \varphi_2}
\end{split}	
\end{align}
which follows from~\eqref{eq:intholder}, Theorem~\ref{th:parapiecewholder}, Lemma~\ref{lem:LipschitzParaPiecw} and the fact that $\beta \in C^{0,1}([\epsilon,R])$, $\varphi_i \in C^{0,\gamma}_\mathrm{par}(\overline{\Omega}_T^j)$ for $i=1,2$ as well as uniform positivity of $\sigma$.
Hence we obtain that $\Xi$ defines a contraction on $\CHI_T^\mathrm{pw}$ if we choose $\tilde{T}$ small enough.
\end{proof}

Using Proposition~\ref{prp:contr}, we can show the existence of a unique solution to~\eqref{eq:fullmildweak} in $\CHI_T^\mathrm{pw}$ for sufficiently small $T>0$.
\begin{thm}\label{th:wellposedPara}
	\REV{
%	Let Assumptions \ref{ass:sigma} and \ref{ass:alphabeta} as well as the assumptions of Proposition~\ref{prp:contr} hold.
Let  $d\geq 1$, $R>\epsilon>0$ and let   $\Omega \subset \R^d$   be a bounded Lipschitz domain.
Let $\Omega^j\subset \Omega$ for $j = 1,\ldots,M$ be pairwise disjoint open subsets 
such that $\overline{\Omega} = \bigcup_{j=1}^M \overline{\Omega}^j$, $\Omega^j \Subset \Omega$ for $j = 1,\ldots,M-1$ and $\partial \Omega \subset \partial \Omega^M$.
	Suppose that $\Omega^j$ has a $C^{1,\mu}$-boundary with $\mu>0$, $\varphi_0 \in C^{0,\gamma}(\overline{\Omega}^j)$ and  $u(0,\cdot) = u_0 \in C^{1,\gamma}(\overline{\Omega}^j)$ for $j=1,\ldots,M$, $\gamma \in (0,\mu/(1+\mu)]$. In addition to Assumptions \ref{ass:sigma} and \ref{ass:alphabeta}, let $\alpha, \zeta \in C^2(\R^+)$, $\beta \in C^1(\R^+)$ and let $\kappa \in C^{1,1}$ be defined as in \eqref{eq:kappa}.
	Suppose that
	\begin{align*}
		\sum_{j=1}^{M} \Norm[C^{0,\gamma}(\overline{\Omega}^j)]{\varphi_0} < R,
	\end{align*}
	$\inf_\Omega \varphi_0 > \epsilon$,
	$S_T^\mathrm{pw}$ be defined as in \eqref{eq:STpwdef}, $\mathcal{P}$ as in \eqref{eq:Pdef} and $\Xi$ as in \eqref{eq:xioperator}.
	Then the following statements hold.
	\begin{enumerate}
		\item[{\rm(i)}] There exists $T>0$ depending on $R$, $\epsilon$, $d$, $\varphi_0$ and $u_0$ such that there exists a unique $\varphi \in \CHI_T^\mathrm{pw}$ solving $\varphi = \Xi[\varphi]$. Moreover, for this $T$, $(\varphi, u) \in C^{0,\gamma}_\mathrm{par}(\overline{\Omega}_T^j)\times C^{1,\gamma}_\mathrm{par}(\overline{\Omega}_T^j)$ for $j = 1,\ldots,M$  is the unique solution to
		\[
		\begin{aligned}
			\partial_t \varphi &= -{\beta(\varphi)}{\kappa(u)} - Q \partial_t u ,\\
			\partial_t u &= \frac{1}{Q} \bl \nabla \cdot \alpha(\varphi) (\nabla u + \zeta(\varphi))-{\beta(\varphi)}{\kappa(u)} \br ,
		\end{aligned}
		\]
		interpreted in the sense of~\eqref{eq:fullmildweak}. Moreover, the solution $\varphi\in \CHI_T^\mathrm{pw}$ depends continuously in $C^{0,\gamma}_\mathrm{par}(\overline{\Omega}_T^j)$ on the initial data $\varphi_0$.
		\item[{\rm(ii)}] Either the solution can be extended to $(\varphi,u) \in C^{0,\gamma}_\mathrm{par}([0,\infty) \times \overline{\Omega}^j ) \times C^{1,\gamma}_\mathrm{par}( [0,\infty) \times \overline{\Omega}^j )$ or there exists a finite $T_{\mathrm{max}}>0$ such that
		\begin{align*}
%			\lim_{T\to T_{\mathrm{max}}} \bl \|\varphi(T,\cdot) \|_{L^\infty( \Omega)}+\frac{1}{| \varphi(T,\cdot)|} \br =\infty. 
			\lim_{T\to T_{\mathrm{max}}} \bl \sum_{j=1}^{M} \Norm[C^{0,\gamma}(\overline{\Omega}^j)]{\varphi(T,\cdot)} + \REV{\Norm[L^\infty(\Omega)]{\frac{1}{\varphi(T,\cdot)}}} \br =\infty. 
		\end{align*}
		\end{enumerate}}
	%
%	Under the same assumptions as in Proposition~\ref{prp:contr}, there exists $T>0$, depending on $R,\epsilon,d,\varphi_0$, such that $\Xi[\varphi] \in \CHI_T^\mathrm{pw}$ coupled with $u(t,\cdot)=\mathcal{P}(\varphi)(t,\cdot)$ for $t\in[0,T]$ is the  local unique solution to~\eqref{eq:fullmildweak} on $\CHI_T^\mathrm{pw}$ when equipped with initial data $\varphi_0$ for $\varphi$. If the maximal time of existence $T_{\text{max}}>0$ satisfies $T_{\mathrm{max}}<\infty$, then
%	\begin{align}\label{eq:Tmaxgeneral}
%		\lim_{T\to T_{\mathrm{max}}} \bl \sum_{j=1}^{M} \Norm[C^{0,\gamma}(\overline{\Omega}^j)]{\varphi(T,\cdot)} + \frac{1}{| \varphi(T,\cdot)|} \br =\infty. 
%	\end{align}
%	Moreover, the solution $\varphi\in \CHI_T^\mathrm{pw}$ depends continuously on the initial data $\varphi_0$.
\end{thm}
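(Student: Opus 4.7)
The plan is to follow the same strategy as in Theorem~\ref{th:wellposed} and Corollary~\ref{th:smoothwellposed}, now using the parabolic machinery developed in Section~\ref{sec:parabolic} in place of the elliptic one. For (i), Proposition~\ref{prp:contr} provides a $\tilde T>0$ such that $\Xi\colon \CHI_T^\mathrm{pw}\to \CHI_T^\mathrm{pw}$ is a contraction with respect to $\Norm[\mathrm{pw}]{\cdot}$ for every $T\in(0,\tilde T)$. Since $(\CHI_T^\mathrm{pw},\Norm[\mathrm{pw}]{\cdot})$ is a closed subset of the Banach space $\bigoplus_{j=1}^M C^{0,\gamma}_\mathrm{par}(\overline{\Omega}_T^j)$, Banach's fixed point theorem yields a unique $\varphi\in \CHI_T^\mathrm{pw}$ with $\varphi=\Xi[\varphi]$. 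Setting $u=\mathcal{P}(\varphi)$, definition~\eqref{eq:xioperator} gives exactly~\eqref{eq:phi}, while~\eqref{eq:u} holds by construction of $\mathcal{P}$ via Theorem~\ref{th:exparabolic}. The additional regularity $u\in C^{1,\gamma}_\mathrm{par}(\overline{\Omega}_T^j)$ follows from Theorem~\ref{th:parapiecewholder} applied to~\eqref{eq:weakpara} once $\varphi\in C^{0,\gamma}_\mathrm{par}(\overline{\Omega}_T^j)$ is known.

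For uniqueness in the full space $C^{0,\gamma}_\mathrm{par}(\overline{\Omega}_T^j)\times C^{1,\gamma}_\mathrm{par}(\overline{\Omega}_T^j)$, I would argue that any such pair $(\varphi,u)$ lies in $\CHI_{T'}^\mathrm{pw}$ for some $T'\in(0,T]$ (by continuity at $t=0$ and the strict inequalities $\Norm[\mathrm{pw}]{\varphi_0}<R$, $\inf_\Omega \varphi_0>\epsilon$), and therefore coincides with the fixed point on $[0,T']$. A standard covering/continuation argument on $[0,T]$ then gives global uniqueness on the existence interval. For the continuous dependence, I would subtract the fixed point equations for two initial data $\varphi_0,\psi_0$, apply the Lipschitz estimate from Lemma~\ref{lem:LipschitzParaPiecw} to control $\Norm[\mathrm{pw}]{\mathcal{P}(\varphi)-\mathcal{P}(\psi)}$, and combine with the estimate~\eqref{eq:parabolic_lipschitz_help} to obtain
\[
    \Norm[\mathrm{pw}]{\varphi-\psi} \leq C\,\Norm[\mathrm{pw}]{\varphi_0-\psi_0} + \tfrac{1}{2}\Norm[\mathrm{pw}]{\varphi-\psi}
\]
for $T$ sufficiently small, which gives the desired Lipschitz dependence on initial data; on longer intervals one iterates this local estimate.

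Part (ii) is a standard maximality argument. Define
\[
    T_{\mathrm{max}} = \sup\Bigl\{ T>0 : \text{a solution in the above sense exists on $[0,T]$} \Bigr\},
\]
which is well-defined by (i). If $T_{\mathrm{max}}=\infty$, the solution extends to $[0,\infty)$. Otherwise, suppose for contradiction that
\[
    \limsup_{t\to T_{\mathrm{max}}^-}\Bigl( \sum_{j=1}^M \Norm[C^{0,\gamma}(\overline{\Omega}^j)]{\varphi(t,\cdot)} + \Norm[L^\infty(\Omega)]{1/\varphi(t,\cdot)} \Bigr) \leq K <\infty.
\]
Then, picking $t_0<T_{\mathrm{max}}$ close enough to $T_{\mathrm{max}}$ so that $(\varphi(t_0,\cdot),u(t_0,\cdot))$ satisfies the hypotheses of (i) with parameters $R=K+1$ and $\epsilon=\tfrac{1}{2}K^{-1}$, the local existence result yields a solution on $[t_0,t_0+\delta]$ for some $\delta>0$ depending only on $K$, $d$ and the data. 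Concatenating with the solution on $[0,t_0]$ (possible since both agree at $t_0$ by uniqueness) gives a solution on $[0,t_0+\delta]$ with $t_0+\delta>T_{\mathrm{max}}$ for $t_0$ close enough to $T_{\mathrm{max}}$, contradicting maximality.

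The main obstacle I anticipate is the concatenation step in (ii): one has to ensure that the piecewise Hölder regularity of $u(t_0,\cdot)$ (now playing the role of new initial datum $u_0$) is compatible with the $C^{1,\gamma}(\overline{\Omega}^j)$-assumption required in Proposition~\ref{prp:contr}. This is provided by Theorem~\ref{th:parapiecewholder}, which yields $u(t_0,\cdot)\in C^{1,\gamma}(\overline{\Omega}^j)$ with norm bounded in terms of $\sup_{[0,t_0]}\Norm[\mathrm{pw}]{\varphi}$, $\Norm[L^\infty]{u}$ and the initial data. The uniform $L^\infty$-bound needed here comes from Theorem~\ref{th:ubound}, and the uniform piecewise Hölder bound on $\varphi$ from the assumed bound $K$; combining these inputs, the new initial pair satisfies the hypotheses of (i), closing the continuation argument.
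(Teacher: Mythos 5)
Your proposal is correct and follows essentially the same route as the paper: Banach's fixed point theorem via the contraction property of Proposition~\ref{prp:contr}, continuous dependence by rearranging the contraction-type estimate built from Lemma~\ref{lem:LipschitzParaPiecw} and~\eqref{eq:parabolic_lipschitz_help}, and a restart/continuation argument for (ii) using Theorems~\ref{th:ubound} and~\ref{th:parapiecewholder} to verify that the data at the restart time satisfy the hypotheses of the local theory. Your explicit attention to the regularity of $u(t_0,\cdot)$ in the concatenation step matches the paper's use of Theorem~\ref{th:parapiecewholder} for exactly this purpose.
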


\begin{proof}
Note that we  fixed some $T>0$ in order to get a time-uniform  constant for the bound of the solution $u$ in Theorem~\ref{th:exparabolic}.
By Proposition~\ref{prp:contr} there exists $T_2 \in (0,T]$ such that $\Xi[\varphi] \in \CHI_{T_2}^\mathrm{pw}$ is a contraction on $\CHI_{T_2}^\mathrm{pw}$. The upper bound $T$ is needed here since the operator $\mathcal{P}$ may otherwise not be defined. The contraction mapping theorem implies that there exists a unique fixed point $\varphi \in \CHI_{T_2}^\mathrm{pw}$ which proves the existence and uniqueness of a local solution to~\eqref{eq:fullmildweak} in $\CHI_{T_2}^\mathrm{pw}$\REV{, and thus the first statement of (i).}

To show the continuous dependence on the initial data, we consider $\varphi_0,\psi_0 \in C^{0,\gamma}(\overline{\Omega}^j)$ satisfying
$\sum_{j=1}^{M} \Norm[C^{0,\gamma}(\overline{\Omega}^j)]{\varphi_0} < R$, $\inf_{\Omega_T}\varphi_0 > \epsilon$ and $\sum_{j=1}^{M} \Norm[C^{0,\gamma}(\overline{\Omega}^j)]{\psi_0} < R$, $ \inf_{\Omega_T} \psi_0 > \epsilon$.
Then, there exists a $T>0$ such that $\varphi,\psi \in \CHI_T^\mathrm{pw}$ satisfy~\eqref{eq:fullmildweak} with initial data $\varphi_0,\psi_0$.

For $t\in[0,T]$, we have
\begin{align*}
	\sum_{j=1}^{M} \Norm[C^{0,\gamma}_\mathrm{par}(\overline{\Omega}^j_t)]{\varphi - \psi}
	\leq \sum_{j=1}^{M} &\Norm[C^{0,\gamma}(\overline{\Omega}^j)]{\varphi_0 - \psi_0} + Q \Norm[C^{0,\gamma}_\mathrm{par}(\overline{\Omega}^j_t)]{\mathcal{P}(\varphi)(t,\cdot) - \mathcal{P}(\psi)(t,\cdot)}\\
	&+ \Norm[C^{0,\gamma}_\mathrm{par}(\overline{\Omega}^j_t)]{\int_{0}^{t} \beta(\varphi(s,\cdot)) \, \kappa(\mathcal{P}(\varphi)(s,\cdot)) - \beta(\psi(s,\cdot)) \, \kappa(\mathcal{P}(\psi)(s,\cdot)) \D s}\\
	\leq \sum_{j=1}^{M} &\Norm[C^{0,\gamma}(\overline{\Omega}^j)]{\varphi_0 - \psi_0} + (C_1 t^{\gamma/2} + C_2 t^{1-\gamma/2}) \Norm[C^{0,\gamma}_\mathrm{par}(\overline{\Omega}^j_t)]{\varphi - \psi},
\end{align*}
where the estimate of the second term follows as in \eqref{eq:pwlipschitzparabolic} 
with $C_1$ from~\eqref{eq:pwlipschitzparabolic}, independent of $\varphi,\psi$, and the estimate of the third term is obtained as in \eqref{eq:parabolic_lipschitz_help}. This yields
\begin{align*}
	\sum_{j=1}^{M} \Norm[C^{0,\gamma}_\mathrm{par}(\overline{\Omega}^j_t)]{\varphi - \psi}
	\leq \frac{1}{1 - (C_1 t^{\gamma/2} + C_2 t^{1-\gamma/2})} \sum_{j=1}^{M} \Norm[C^{0,\gamma}(\overline{\Omega}^j)]{\varphi_0 - \psi_0},
\end{align*}
where one should note that we chose $T$ such that $C_1 t^{\gamma/2} + C_2 t^{1-\gamma/2} < 1$ for all $t \leq T$ in Proposition~\ref{prp:contr}. Hence we obtain
\begin{align*}
	\sum_{j=1}^{M} \Norm[C^{0,\gamma}_\mathrm{par}(\overline{\Omega}^j_t)]{\varphi - \psi}
	\leq C \sum_{j=1}^{M} \Norm[C^{0,\gamma}(\overline{\Omega}^j)]{\varphi_0 - \psi_0}
\end{align*}
with a $C>0$.

\REV{To show (ii),} suppose that $T_{\mathrm{max}}<\infty$ with
\begin{align*}
	\sup_{t\in[0, T_{\text{max}})} \bl \sum_{j=1}^{M} \Norm[C^{0,\gamma}(\overline{\Omega}^j)]{\varphi(t,\cdot)} + \REV{\Norm[L^\infty(\Omega)]{\frac{1}{\varphi(T,\cdot)}}} \br \leq K 
\end{align*}
for some $K>0$.
By Theorem~\ref{th:parapiecewholder} and Proposition~\ref{prp:contr}, $\varphi(T_{\text{max}},\cdot) \in C^{0,\gamma}(\overline{\Omega}^j)$ and $u(T_{\text{max}},\cdot) \in C^{1,\gamma}(\overline{\Omega}^j)$ for $j = 1,\ldots,M$. Hence our local existence theory for $\tilde{\varphi}_0 = \varphi(T_{\text{max}},\cdot)$, $\tilde{u}_0 = u(T_{\text{max}},\cdot)$, $\tilde{R}=K+1$ and $\tilde{\epsilon}=\tfrac{1}{2}K^{-1}$ yields the existence of a solution on $[0,T_{\text{max}}+\delta]$ for some $\delta > 0$, contradicting the maximality of $T_{\text{max}}$.
\end{proof}

\begin{rem}
	Using a result on piecewise H\"older regularity of solutions of elliptic problems obtained in~\cite{Li2000} analogous to Theorem \ref{th:parapiecewholder}, one can also deduce existence and uniqueness of solutions to the viscous problem \eqref{eq:generalviscousmildweak} with piecewise H\"older regularity for any spatial dimension~$d$.
\end{rem}

\subsection*{Acknowledgements}

The authors would like to thank Evangelos Moulas for introducing them to the models discussed in this work and for helpful discussions.

\bibliographystyle{plain}
\bibliography{BBKporousmedia}

\appendix

\section{Model derivation}\label{sec:deriv}

We give a brief overview of the derivation of~\eqref{eq:model}. We consider a two-phase flow model for fluid in a porous rock in Eulerian coordinates. Let $\rho^{\mathrm{s}}$ and $v^{\mathrm{s}}$ denote the density and velocity of the solid matrix, respectively, and let $\rho^{\mathrm{f}}$ and $v^{\mathrm{f}}$ denote density and velocity of the fluid. Moreover, let $\phi$ denote the porosity, that is, the volume fraction of the fluid.
Conservation of mass for both phases then yields
\begin{equation}\label{eq:balance}
	\partial_t \bigl(\rho^{\mathrm{f}} \phi \bigr)
	= -\nabla \cdot \bigl(\rho^{\mathrm{f}} \phi {v}^{\mathrm{f}}\bigr), 
	\qquad
	\partial_t \bigl(\rho^{\mathrm{s}} (1-\phi)\bigr)
	= -\nabla \cdot \bigl(\rho^{\mathrm{s}} (1-\phi) v^\mathrm{s}\bigr) \,.
\end{equation}
\REV{We now rewrite these  equations} in terms of the material derivatives
$D^{\mathrm{p}}_t = \partial_t + {v}^\mathrm{p} \cdot \nabla$ for $\mathrm{p} \in \{\mathrm{s},\mathrm{f}\}$. 
We assume both phases to be incompressible, that is, $D^{\mathrm{p}}_t \rho^{\mathrm{p}} = 0$ for $\mathrm{p} \in \{\mathrm{s},\mathrm{f}\}$.
For the solid phase, this leads to the equation
\begin{equation}\label{eq:solid}
	-\frac{1}{1-\phi} D^{\mathrm{s}}_t \phi + \nabla \cdot v^{\mathrm{s}}
	= 0\,.
\end{equation}
For the last term on the left in~\eqref{eq:solid} we now use a rheology that models viscoelastic behavior of the general form
\begin{equation}\label{eq:rheo0}
	\nabla \cdot v^\mathrm{s}
	= - \frac{1}{(1-\phi) K_{\phi}} D_t^\mathrm{s} u - \frac{1}{(1-\phi) \, \eta_{\phi}(u)} u\,.
\end{equation}
Here $u$ describes the effective pressure, $\eta_{\phi}(u)$ is the effective viscosity and $K_{\phi}$ is the effective compressibility.
Following \cite[eq.~(11)]{Yarushina2015a} in the incompressible limit, we obtain
\[ 
	K_{\phi} =  \frac{K}{1-\phi}
\]
with constant $K>0$. Moreover, following the approach of \cite{McKenzie1984} (see also \cite[eq.~(57)]{Yarushina2015a}) we use
\[
	\eta_\phi = \frac{\sigma(u)}{(1-\phi) \phi^m}    ,
\]
where $m$ is a constant viscosity exponent, typically $m=1$. Here  $\sigma$ incorporates effects of decompaction weakening as in~\cite{Raess2019}. With $b(\phi) = \phi^m$, we thus obtain
\begin{equation}\label{eq:rheo}
	\nabla \cdot v^\mathrm{s} = - \frac{1}{K} D_t^\mathrm{s} u - \frac{b(\phi)}{\sigma(u)} u\,.
\end{equation}
Lacking a similar rheology for the fluid, we rewrite \eqref{eq:balance} for the fluid phase in the form
\[
	D^\mathrm{s}_t \phi + \nabla \cdot \bigl(\phi (v^\mathrm{f} - v^\mathrm{s}) \bigr) + \phi \nabla \cdot v^\mathrm{s}
	= 0\,.
\]
To the last term on the left, we apply Darcy's law in the form
\[
	\phi (v^\mathrm{f} - v^\mathrm{s})
	= a(\phi) \bigl(\nabla u + (1-\phi) ( \rho^\mathrm{s} - \rho^\mathrm{f}) g\bigr),
\]
where the Carman-Kozeny relationship yields $a(\phi) = a_0 \phi^n$ as in \eqref{eq:CK}, usually with $n \in [2,3]$, and $g$ describes the effects of gravity. We thus obtain
\begin{align}\label{eq:fluid}
	D^\mathrm{s}_t \phi + \nabla \cdot \left( a(\phi) (\nabla u + (1-\phi) ( \rho^\mathrm{s} - \rho^\mathrm{f}) \, g) \right) + \phi \nabla \cdot v^\mathrm{s}
	= 0.
\end{align}
Our incompressibility assumption justifies assuming $f = ( \rho^\mathrm{s} - \rho^\mathrm{f}) {g}$ to be constant.
Moreover, we replace $D_t^\mathrm{s}$ by $ \partial_t$, since the solid matrix can be assumed to move at negligible speed compared to the fluid. 
Combining \eqref{eq:solid} with \eqref{eq:rheo} yields
\[
\partial_t \phi
= - (1-\phi) \left( \frac{1}{K} \partial_t u + \frac{b(\phi)}{\sigma(u)} u \right).
\]
Moreover, adding \eqref{eq:solid} and \eqref{eq:fluid} and again using \eqref{eq:rheo} yields
\[
\frac{1}{K} \partial_t u
= \nabla \cdot \bigl( a(\phi) (\nabla u + (1-\phi) f) \bigr) - \frac{b(\phi)}{\sigma(u)} u  .
\]
Setting $Q = 1/K$, we thus arrive at~\eqref{eq:model}.

\section{$C^k$-convergence of $\Delta_{\kappa,\uzero}$}\label{sec:Ckconv}
We aim to show that $\lim_{\dphi \to 0} \Norm[C^k(\overline{\Omega})]{\udphi - \uzero}$ implies $\lim_{\dphi \to 0} \Norm[C^k(\overline{\Omega})]{\Delta_{\kappa,\uzero}(\udphi) - \kappa'(\uzero)}	= 0$.
We may assume that $\udphi \neq \uzero$. 
For the first derivative, we obtain
\begin{multline}\label{eq:deriv}
	\partial_{x_i} \left( \frac{\kappa(\udphi) - \kappa(\uzero)}{\udphi - \uzero} - \kappa'(\uzero) \right)\\
	\begin{split}
		&= \frac{(\kappa'(\udphi) \, \partial_{x_i} \udphi - \kappa'(\uzero) \, \partial_{x_i} \uzero) (\udphi - \uzero) - (\kappa(\udphi) - \kappa(\uzero)) \, \partial_{x_i} (\udphi - \uzero)}{(\udphi - \uzero)^2} - \kappa''(\uzero) \partial_{x_i} \uzero\\
		&= F_1 \, F_2 - F_3 \, F_4
	\end{split}
\end{multline}
where
\begin{align*}
	F_1 &= \partial_{x_i} \uzero ,  &  F_2 &=  \frac{\kappa'(\udphi) - \kappa'(\uzero)}{\udphi - \uzero} - \kappa''(\uzero), \\
	F_3 &= \partial_{x_i} (\udphi - \uzero) , &  F_4 & = \frac{\kappa(\udphi) + \kappa'(\udphi) (\uzero - \udphi)  - \kappa(\uzero)}{(\udphi - \uzero)^2} ,
\end{align*}
and observe that $F_2, F_3$ go to zero and $F_1, F_4$ are bounded if $\dphi \to 0$. This can be extended to higher derivatives by using a generalized product rule. It is clear that the higher derivatives of $F_2$ and $F_3$ still tend to zero (for $F_2$ we just repeat~\eqref{eq:deriv} inductively) and the ones of $F_1$ are still bounded. For the derivatives of $F_4$ we have
\begin{multline*}
	\partial_{x_j} \left( \frac{1}{(\udphi - \uzero)^{n+1}} \left( \kappa(\uzero) - \sum_{k=0}^n \frac{\kappa^{(k)}(\udphi)}{k!} (\uzero - \udphi)^k \right) \right)\\
	\begin{split}
		=& - \frac{n+1}{(\udphi - \uzero)^{n+2}} \, \partial_{x_j} (\udphi - \uzero) \left( \kappa(\uzero) - \sum_{k=0}^n \frac{\kappa^{(k)}(\udphi)}{k!} (\uzero - \udphi)^k \right)\\
		& + \frac{1}{(\udphi - \uzero)^{n+1}} \Bigg( \kappa'(\uzero) \, \partial_{x_j} \uzero - \sum_{k=0}^n \frac{\kappa^{(k+1)}(\udphi)}{k!} (\uzero - \udphi)^k \, \partial_{x_j} \udphi\\
		&\hspace*{3cm} - \sum_{k=1}^n \frac{\kappa^{(k)}(\udphi)}{(k-1)!} (\uzero - \udphi)^{k-1} \partial_{x_j} (\uzero - \udphi) \Bigg)\\
		=& - \frac{n+1}{(\udphi - \uzero)^{n+2}} \, \partial_{x_j} (\udphi - \uzero) \left( \kappa(\uzero) - \sum_{k=0}^{n+1} \frac{\kappa^{(k)}(\udphi)}{k!} (\uzero - \udphi)^k \right)\\
		& + \frac{1}{(\udphi - \uzero)^{n+1}} \Bigg( \kappa'(\uzero) \, \partial_{x_j} \uzero - \frac{\kappa^{(n+1)}(\udphi)}{n!} (\uzero - \udphi)^n \, \partial_{x_j} \uzero\\
		&\hspace*{3cm} - \sum_{k=0}^{n-1} \frac{\kappa^{(k+1)}(\udphi)}{k!} (\uzero - \udphi)^k\, \partial_{x_j} \udphi\\
		&\hspace*{3cm} - \sum_{k=1}^n \frac{\kappa^{(k)}(\udphi)}{(k-1)!} (\uzero - \udphi)^{k-1} \, \partial_{x_j} (\uzero - \udphi) \Bigg)\\
		=& - \frac{n+1}{(\udphi - \uzero)^{n+2}} \, \partial_{x_j} (\udphi - \uzero) \left( \kappa(\uzero) - \sum_{k=0}^{n+1} \frac{\kappa^{(k)}(\udphi)}{k!} (\uzero - \udphi)^k \right)\\
		& + \frac{1}{(\udphi - \uzero)^{n+1}} \, \partial_{x_j} \uzero \left( \kappa'(\uzero) - \sum_{k=0}^n \frac{\kappa^{(k+1)}(\udphi)}{k!} (\uzero - \udphi)^k \right),
	\end{split}
\end{multline*}
which yields the desired result by induction.

\section{Well-posedness of the semilinear parabolic problem}\label{sec:appexistence}

\begin{proof}[Proof of Theorem~\ref{th:exparabolic}]
	The proof follows the standard Galerkin method for parabolic PDEs. 
	Let $N\in \N$ be fixed. We consider the finite-dimensional space $E_N=\operatorname{span}\{w_1,\ldots,w_N\}$ consisting of the first $N$ vectors of an orthonormal basis $\{w_k\colon k\in \N\}$ of $L^2(\Omega)$ where we assume that $\{w_k\colon k\in \N\}$ is also an orthogonal basis of $W^{1,2}_0(\Omega)$. %
	Let $T>0$ be given and let $u_N\colon [0,T]\to W^{1,2}_0(\Omega)$ be of the form
	\begin{align*}
		u_N(t)=\sum_{k=1}^N d_N^k(t) w_k,
	\end{align*}
	where $d_N^k\colon [0,T]\to \R$ are absolutely continuous scalar coefficient functions. We set
	\begin{align}\label{eq:dinit}
		d_N^k(0)= \int_\Omega u_0 w_k\di x,\quad k=1,\ldots,N,
	\end{align}
	implying that 
	\begin{align}\label{eq:u0init}
		u_N(0)=\sum_{k=1}^N \bl \int_\Omega u_0 w_k\di x \br w_k \in E_N,
	\end{align}
	i.e.\ $u_N(0)$ is the $L^2$-orthogonal projection of $u_0$ on $E_N$.
	We want to determine $\{d_N^1,\ldots,d_N^N\}$ such that $u_N$ satisfies the weak formulation~\eqref{eq:weakpara} for all test functions $\psi\in E_N$. Hence, we require that $\{d_N^1,\ldots,d_N^N\}$ satisfy
	\begin{multline}\label{eq:weak_para_approx}
		\int_\Omega \partial_t u_N(t) w_k \di x + \int_\Omega \Qalpha(\varphi(t)) \, \nabla u_N(t) \cdot \nabla w_k\di x + \int_\Omega \frac{\Qbeta(\varphi(t))}{\sigma(u_N(t))} u_N(t) w_k \di x\\
		= \int_\Omega w_k \nabla \cdot \Qalpha(\varphi(t)) \, \zeta(\varphi(t))\di x,\quad k=1,\ldots,N,\quad \text{a.e.\ } t\in [0,T],
	\end{multline}
	which is equivalent to the system of ODEs
	 \begin{align*}%
		 & \partial_t d_N^k(t) +  \sum_{j=1}^N d_N^j(t)  A^{jk}(t)+  \sum_{j=1}^N d_N^j(t)  B^{jk}(t, d_N(t))=g^k(t),\quad k=1,\ldots,N,\quad \text{a.e.\ } t\in [0,T],
	\end{align*}
	where 
	\begin{align*}
		d_N&=(d_1^N,\ldots,d_N^N), & 		A^{jk}(t)&=\int_\Omega \Qalpha(\varphi(t)) \, \nabla w_j \cdot \nabla w_k\di x,\\
		B^{jk}(t,d_N(t))&=\int_\Omega\frac{\Qbeta(\varphi(t))}{\sigma(\sum_{i=1}^N d_N^i(t) w_i)} w_j w_k\di x,  & 
		g^k(t)&=\int_\Omega w_k \nabla \cdot \Qalpha(\varphi(t)) \, \zeta(\varphi(t)) \di x,
	\end{align*}
	for $j,k=1,\ldots,N$.
	Setting $g=(g^1,\ldots,g^N)$, this yields
	\begin{align}\label{eq:odematrix}
		\partial_t d_N(t) +  A(t) d_N(t) +  B(t, d_N(t))d_N(t)=g(t),\quad \text{a.e.\ } t\in [0,T].
	\end{align}
	We have $A\in L^\infty(0,T;\R^{N\times N})$ since $\varphi\in L^\infty((0,T)\times\Omega)$. Further, $B(\cdot,d_N)d_N$ is Lipschitz continuous with respect to $d_N$, which follows from Assumption~\ref{ass:sigma} for $\sigma$. We can write~\eqref{eq:odematrix} as
	\begin{align}\label{eq:fixedpointparabolic}
		d_N=\Phi(d_N)
	\end{align}
	with 
	\begin{align*}
		\Phi(d_N)(t)=d_N(0) -\int_0^t A(s) d_N(s) \di s -\int_0^t B(s,d_N(s)) d_N(s) \di s+\int_0^t g(s)\di s
	\end{align*}
	and initial data $d_N(0)$ in~\eqref{eq:dinit}.
	This implies that $\Phi\colon C([0,T_\ast];\R^N)\to C([0,T_\ast];\R^N)$ for any $0<T_\ast \leq T$. Further, for any $d,\tilde{d} \in C([0,T_\ast];\R^N)$, we have
	\begin{align*}
		\| \Phi(d)-\Phi(\tilde d)\|_{C([0,T_\ast];\R^N)}\leq MT_\ast \| d-\tilde d\|_{C([0,T_\ast];\R^N)},
	\end{align*}
	where $M>0$ depends on $\| A\|_{L^\infty(0,T;\R^{N\times N})}$ and the Lipschitz constant of $d_N\mapsto B(\cdot,d_N)d_N$.
	Choosing $T_\ast$ such that $MT_\ast <1$ implies that the map $\Phi$ is a contraction on $C([0,T_\ast];\R^N)$. By the contraction mapping theorem, there exists a unique solution $d_N$ to~\eqref{eq:fixedpointparabolic} on $[0,T_\ast]$ and by applying this result for a finite number of times we obtain a solution $d_N\in C([0,T];\R^N)$.
	We conclude that for any $N\in \N$ and any $T>0$, there exists a unique  $u_N\colon [0,T]\to E_N$ with initial data~\eqref{eq:u0init} such that~\eqref{eq:weak_para_approx} is satisfied.
	
	For an a priori energy estimate for $u_N$, we consider the weak formulation ~\eqref{eq:weakpara} for $u_N$ with test function $u_N(t)\in E_N$ for any $t\in [0,T]$ and we have
	\begin{multline*}
		\frac{1}{2}\frac{\di }{\di t}\int_\Omega  u_N^2(t) \di x + \int_\Omega \Qalpha(\varphi(t)) \, \nabla u_N(t) \cdot \nabla u_N(t)\di x +\int_\Omega \frac{\Qbeta(\varphi(t))}{\sigma(u_N(t))} u_N^2(t) \di x \\
		=\int_\Omega u_N(t) \, \nabla \cdot \Qalpha(\varphi(t)) \, \zeta(\varphi(t)) \di x,
	\end{multline*}
	implying that
	there exists a constant $C_1>0$ depending on $Q$, $\varphi$, $c_0$,  such that 
	\begin{align*}
		\frac{1}{2}\frac{\di }{\di t}\| u_N(t) \|_{L^2(\Omega)}^2 +C_1\|  u_N(t) \|_{W^{1,2}_0(\Omega)}^2
		\leq\int_\Omega u_N(t) \nabla \cdot \Qalpha(\varphi(t)) \, \zeta(\varphi(t)) \di x.
	\end{align*}
	Integration over $[0,T_\ast]$ for any $0<T_\ast\leq T$ yields
	\begin{multline*}
		\frac{1}{2}\| u_N(T_\ast) \|_{L^2(\Omega)}^2 +C_1\int_0^{T_\ast}\|  u_N(t) \|_{W^{1,2}_0(\Omega)}^2\ \di t \\
		\leq \int_0^{T_\ast}\int_\Omega u_N(t) \nabla \cdot \Qalpha(\varphi(t)) \, \zeta(\varphi(t)) \di x\di t+ \| u_0 \|_{L^2(\Omega)}^2,
	\end{multline*}
	where we used that  $\| u_N(0) \|_{L^2(\Omega)}^2=\sum_{k=1}^N |d_N^k(0)|^2=\sum_{k=1}^N |\int_\Omega u_0 w_k\di x|^2\leq \| u_0 \|_{L^2(\Omega)}^2$ by  Bessel's inequality.
	Since
	\begin{align*}
		&\int_0^{T_\ast}\int_\Omega u_N(t) \nabla \cdot \Qalpha(\varphi(t)) \, \zeta(\varphi(t))\di x\di t\\
		&\leq \int_0^{T_\ast}\| u_N(t)\|_{W^{1,2}_0(\Omega)} \| \Qalpha(\varphi(t)) \, \zeta(\varphi(t))\|_{L^2(\Omega)}\di t\\
		&\leq \frac{1}{2C_1}\int_0^{T_\ast} \| \Qalpha(\varphi(t)) \, \zeta(\varphi(t))\|_{L^2(\Omega)}^2 \di t+\frac{C_1}{2}\int_0^{T_\ast}\|  u_N(t) \|_{W^{1,2}_0(\Omega)}^2\ \di t,
	\end{align*}
	we obtain
	\begin{align*}
	   & \frac{1}{2}\| u_N(T_\ast) \|_{L^2(\Omega)}^2 +\frac{C_1}{2}\int_0^{T_\ast}\|  u_N(t) \|_{W^{1,2}_0(\Omega)}^2\ \di t \\
		&\leq \frac{1}{2C_1}\int_0^{T} \| \Qalpha(\varphi(t)) \, \zeta(\varphi(t))\|_{L^2(\Omega)}^2 \di t + \| u_0\|_{L^2(\Omega)}^2\\
		&\leq \max\left\{1,\frac{1}{2C_1}\right\} \bl \| \Qalpha(\varphi) \, \zeta(\varphi)\|_{L^2(0,T;L^2(\Omega))}^2 + \| u_0 \|_{L^2(\Omega)}^2 \br,
	\end{align*}
	implying that
	\begin{align*}
	  &\frac{1}{2} \sup_{t\in[0,T]}\| u_N(t) \|_{L^2(\Omega)}^2 +\frac{C_1}{2}\int_0^T\|  u_N(t) \|_{W^{1,2}_0(\Omega)}^2\ \di t\\
		&\leq 2 \max\left\{1,\frac{1}{2C_1}\right\} \bl \| \Qalpha(\varphi) \, \zeta(\varphi)\|_{L^2(0,T;L^2(\Omega))}^2 +  \| u_0 \|_{L^2(\Omega)}^2 \br .
	\end{align*}  
	We conclude that there exists $C_3>0$ such that
	\begin{align}\label{eq:energypara_help}
		\| u_N \|_{L^\infty(0,T,L^2(\Omega))}^2 +\| u_N \|_{L^2(0,T;W^{1,2}_0(\Omega))}^2
		\leq C_3 \bl \| \Qalpha(\varphi) \, \zeta(\varphi)\|_{L^2(0,T;L^2(\Omega))}^2 +  \| u_0 \|_{L^2(\Omega)}^2 \br.
	\end{align}  
	Next, we estimate $\partial_t u_N$. Note that $\partial_t u_N(t)\in E_N$ implying $\int_\Omega \partial_t u_N(t) w_k \di x=0$ for all $k\in\N$ with $k>N$. Since $u_N$ satisfies~\eqref{eq:weak_para_approx}, this yields 
	\begin{align*}
		\int_\Omega \partial_t u_N(t) \psi \di x
		&\leq C_4 \bl \| u_N(t)\|_{W_0^{1,2}(\Omega)} \| \psi\|_{W_0^{1,2}(\Omega)} +\int_\Omega \nabla \psi  \cdot \Qalpha(\varphi(t)) \, \zeta(\varphi(t))\di x \br \\
		&\leq C_4 \| \psi\|_{W_0^{1,2}(\Omega)} \bl \| u_N(t)\|_{W_0^{1,2}(\Omega)}  +\| \Qalpha(\varphi(t)) \, \zeta(\varphi(t))\|_{L^2(\Omega)} \br
	\end{align*}
	for any $\psi \in W^{1,2}_0(\Omega)$, where $C_4>0$ depends on $Q$, $\varphi$ and $\sigma$. This implies that
	\begin{align*}
		\| \partial_t u_N(t)\|_{W^{-1,2}(\Omega)}
		\leq C_4 \bl \| u_N(t)\|_{W_0^{1,2}(\Omega)}  +\| \Qalpha(\varphi(t)) \, \zeta(\varphi) \|_{L^2(\Omega)} \br
	\end{align*}
	and hence there exists $C_5>0$ such that 
	\begin{align*}
		\| \partial_t u_N\|_{L^2(0,T;W^{-1,2}(\Omega))}^2
		&=\int_0^T\| \partial_t u_N(t)\|_{W^{-1,2}(\Omega)}^2\di t \\
		&\leq C_5 \bl \int_0^T\| u_N(t)\|_{W_0^{1,2}(\Omega)}^2 \di t + \int_0^T \| \Qalpha(\varphi(t)) \, \zeta(\varphi(t))\|_{L^2(\Omega)}^2\di t \br \\
		& \leq C_5 (C_3+1) \bl \| \Qalpha(\varphi) \, \zeta(\varphi)\|_{L^2(0,T;L^2(\Omega))}^2 +  \| u_0 \|_{L^2(\Omega)}^2 \br ,
	\end{align*}
	where the last inequality follows from~\eqref{eq:energypara_help}. Together with~\eqref{eq:energypara_help}, we conclude that there exists $C>0$ such that
	\begin{multline*}
		\| u_N\|_{L^\infty(0,T;L^2(\Omega))}^2 + \| u_N\|_{L^2(0,T;W^{1,2}_0(\Omega))}^2 + \| \partial_t u_N \|_{L^2(0,T;W^{-1,2}(\Omega))}^2\\
		\leq C \bl \| \Qalpha(\varphi) \, \zeta(\varphi)\|_{L^2(0,T;L^{2}(\Omega))}^2 +\|u_0\|_{L^2(\Omega)}^2 \br .
	\end{multline*}
	and~\eqref{eq:bound_un} immediately follows for $u_N$.
	
	Due to the uniform bounds in $N$ in~\eqref{eq:bound_un}, $u_N$ is bounded in $C(\REV{[0,T]};L^2(\Omega))\cap L^2(0,T;W^{1,2}_0(\Omega))$ and $\partial_t u_N$ is bounded in $L^2(0,T;W^{-1,2}(\Omega))$. By reflexivity of the respective Hilbert spaces, there exist $u\in C(\REV{[0,T]};L^2(\Omega))\cap L^2(0,T;W^{1,2}_0(\Omega))$ and $\partial_t u\in L^2(0,T;W^{-1,2}(\Omega))$, and subsequences of $u_N$, $\partial_t u_N$, again denoted by $u_N$, $\partial_t u_N$, respectively, such that $u_N\rightharpoonup u$ in $L^2(0,T;W^{1,2}_0(\Omega))$ and $\partial_t u_N\rightharpoonup \partial_t u\in L^2(0,T;W^{-1,2}(\Omega))$.  Next, we show that 
	\[
		\kappa(u_N) \rightarrow \kappa(u)
		\quad\text{in $L^2(0,T;L^2(\Omega))$.}
	\]
	Since  $u_N \rightharpoonup u$ in $L^2(0,T;W^{1,2}_0(\Omega))$ and $ W_0^{1,2}(\Omega)$ is compactly embedded into $L^2(\Omega)$, we obtain that there exists a subsequence of $u_N$ (not relabelled) such that $u_N\to u$ in $L^2(0,T;L^2(\Omega))$ by the Aubin–Lions lemma. This yields
	\begin{align*}
		\Norm[L^2(0,T;L^2(\Omega))]{\kappa(u_N) - \kappa(u)}^2
		&= \int_0^T \int_\Omega \bl \kappa(u_N) - \kappa(u) \br ^2 \di x \di t\\
		&\leq c_L^2 \int_0^T \int_\Omega \bl u_N - u \br ^2 \di x \di t\\
		&= c_L^2 \Norm[L^2(0,T;L^2(\Omega))]{u_N - u}^2
	\end{align*}
	making use of the global Lipschitz property of
	$\kappa$.
	
	Note that we only have weak convergence of the gradient, which does \textit{not} imply a.e.\ convergence, but since the limit $\nabla u$ fulfills the space-time integrated version of~\eqref{eq:weakpara} for all space-time test functions we obtain that
	~\eqref{eq:weakpara} is satisfied for a.e.\ $t\in[0,T]$.
	From \cite[Chapter 5.9, Theorem 3]{Evans2010}, it follows that  $u\in C([0,T]; L^2(\Omega))$.
	
	To show that $u(0)=u_0$, note that from~\eqref{eq:weakpara} we obtain that
	\begin{multline*}
		-\int_0^T\int_\Omega u \partial_t \psi \di x \di t + \int_0^T \int_\Omega \Qalpha(\varphi(t)) \, \nabla u(t) \cdot \nabla \psi\di x\di t +\int_0^T\int_\Omega \Qbeta(\varphi(t)) \, \kappa(u(t))\psi \di x\di t\\
		=\int_0^T\int_\Omega \psi \nabla \cdot \Qalpha(\varphi(t)) \, \zeta(\varphi(t)) \di x\di t + \left< u(0), \psi(0) \right>_{L^2(\Omega)}
	\end{multline*}
	for all $\psi\in C^1([0,T];W^{1,2}_0(\Omega))$ with $\psi(T)=0$. Replacing $u$ by $u_N$ and considering the limit $N\to \infty$ yields
	\begin{multline*}
		-\int_0^T\int_\Omega u \partial_t \psi \di x \di t + \int_0^T \int_\Omega \Qalpha(\varphi(t)) \, \nabla u(t) \cdot \nabla \psi\di x\di t +\int_0^T\int_\Omega \Qbeta(\varphi(t)) \, \kappa(u(t))\psi \di x\di t\\
		=\int_0^T\int_\Omega \psi \nabla \cdot \Qalpha(\varphi(t)) \, \zeta(\varphi(t)) \di x\di t+ \left< u_0, \psi(0) \right>_{L^2(\Omega)}
	\end{multline*}
	since $u_N(0)\to u_0$ in $L^2(\Omega)$.
	As $\psi(0)$ is arbitrary, we conclude that $u(0)=u_0$. Hence, $u$ is a weak solution to~\eqref{eq:weakpara}.
	
	To show the uniqueness of solutions, we consider two weak solutions $u,v\in C([0,T]; L^2(\Omega))\cap L^2(0,T; W^{1,2}_0(\Omega))$ satisfying~\eqref{eq:weakpara} with $u(0)=v(0)=u_0$, implying 
	\begin{multline*}
		\int_\Omega \partial_t (u(t)-v(t)) \psi \di x + \int_\Omega \Qalpha(\varphi(t)) \, \nabla (u(t)-v(t)) \cdot \nabla \psi\di x \\
		+ \int_\Omega \Qbeta(\varphi(t)) \bl \kappa(u(t)) - \kappa(v(t)) \br \psi \di x =0.
	\end{multline*}
	We set $\psi(t)=u(t)-v(t)\in W^{1,2}_0(\Omega)$ and obtain
	\begin{multline*}
		\frac{1}{2}\frac{\di}{\di t}\Norm[L^2(\Omega)]{u(t)-v(t)}^2 + c_P \| u(t)-v(t))\|_{L^2(\Omega)}^2\\
		\begin{split}
		&\leq \int_\Omega \partial_t (u(t)-v(t)) (u(t)-v(t)) \di x + \int_\Omega \Qalpha(\varphi(t)) \AV{\nabla (u(t)-v(t))}^2 \di x\\
		&=-\int_\Omega \Qbeta(\varphi(t)) \bl \kappa(u(t)) - \kappa(v(t)) \br (u(t)-v(t)) \di x\\
		&\leq c_L\Norm[L^\infty(\Omega)]{\Qbeta(\varphi(t))} \Norm[L^2(\Omega)]{u(t)-v(t)}^2,
		\end{split}
	\end{multline*}
	where $c_L$ denotes the Lipschitz constant of
	$\kappa$ from Assumptions~\ref{ass:sigma} and $c_P>0$ is the Poincaré constant.
	As $\|u(0)-v(0)\|_{L^2(\Omega)}=0$, Grönwall's inequality implies that $\|u(t)-v(t)\|_{L^2(\Omega)}=0$ for all $t\geq 0$, so $u(t)=v(t)$ for a.e.\ $x\in\Omega$ and shows the uniqueness of the weak solution of~\eqref{eq:weakpara}.
	\end{proof}

\section{Parabolic norm equivalences}\label{sec:equiv}
We next establish uniform proportionality of $\Norm[C^{0,\gamma}_\mathrm{par}(\overline{\Omega}^j_T)]{f} $ and $|f|_{\gamma/2,\gamma}$ as well as $\Norm[C^{1,\gamma}_\mathrm{par}(\overline{\Omega}^j_T)]{f}$ and $|f|_{(1+\gamma)/2,1+\gamma}$ with $\Omega^j$ as in \eqref{eq:partition} for $j=1,\ldots,M$, where the former norms are defined in~\eqref{eq:holder} and the latter, used in~\cite{Dong2021}, are defined as 
\begin{align*}
	|f|_{\gamma/2,\gamma}
	&= \sup_{\overline{\Omega}^j_T} |f(t,x)| + \sup_{\substack{t_1,t_2,x_1,x_2\\(t_1,x_1) \neq (t_2,x_2)}} \frac{|f(t_1,x_1) - f(t_2,x_2)|}{|x_1 - x_2|^\gamma + |t_1 - t_2|^{\gamma/2}}
\end{align*}	
and
\begin{align*}
	|f|_{(1+\gamma)/2,1+\gamma}
	=& \sup_{\overline{\Omega}^j_T} |f(t,x)| + \sup_{\substack{t_1,t_2,x_1,x_2\\(t_1,x_1) \neq (t_2,x_2)}} \frac{|\nabla f(t_1,x_1) - \nabla f(t_2,x_2)|}{|x_1 - x_2|^\gamma + |t_1 - t_2|^{\gamma/2}} + \sup_{\substack{t_1,t_2,x\\t_1 \neq t_2}} \frac{|f(t_1,x) - f(t_2,x)|}{|t_1 - t_2|^{(1+\gamma)/2}},
\end{align*}	
respectively. Part of the proofs relies on the assumption that $f(0,\cdot) \equiv 0$, which we assume to be given for now.

We start with showing the first equivalence. We have
\begin{align*}
	\Norm[C^{0,\gamma}_\mathrm{par}(\overline{\Omega}^j_T)]{f}
	&= \sup_{\overline{\Omega}^j_T} |f(t,x)| + \sup_{\substack{t,x_1,x_2\\x_1 \neq x_2}} \frac{|f(t,x_1) - f(t,x_2)|}{|x_1 - x_2|^\gamma} + \sup_{\substack{t_1,t_2,x\\t_1 \neq t_2}} \frac{|f(t_1,x) - f(t_2,x)|}{|t_1 - t_2|^{\gamma/2}}\\
	&\leq \sup_{\overline{\Omega}^j_T} |f(t,x)| + 2 \sup_{\substack{t_1,t_2,x_1,x_2\\(t_1,x_1) \neq (t_2,x_2)}} \frac{|f(t_1,x_1) - f(t_2,x_2)|}{|x_1 - x_2|^\gamma + |t_1 - t_2|^{\gamma/2}}\leq 2 |f|_{\gamma/2,\gamma}
\end{align*}
and
\begin{align*}
	|f|_{\gamma/2,\gamma}
	&= \sup_{\overline{\Omega}^j_T} |f(t,x)| + \sup_{\substack{t_1,t_2,x_1,x_2\\(t_1,x_1) \neq (t_2,x_2)}} \frac{|f(t_1,x_1) - f(t_2,x_2)|}{|x_1 - x_2|^\gamma + |t_1 - t_2|^{\gamma/2}}\\
	&\leq \sup_{\overline{\Omega}^j_T} |f(t,x)| + \sup_{\substack{t_1,t_2,x_1,x_2\\(t_1,x_1) \neq (t_2,x_2)}} \frac{|f(t_1,x_1) - f(t_1,x_2)|}{|x_1 - x_2|^\gamma + |t_1 - t_2|^{\gamma/2}} + \sup_{\substack{t_1,t_2,x_1,x_2\\(t_1,x_1) \neq (t_2,x_2)}} \frac{|f(t_1,x_2) - f(t_2,x_2)|}{|x_1 - x_2|^\gamma + |t_1 - t_2|^{\gamma/2}}\\
	&\leq \sup_{\overline{\Omega}^j_T} |f(t,x)| + \sup_{\substack{t,x_1,x_2\\x_1 \neq x_2}} \frac{|f(t,x_1) - f(t,x_2)|}{|x_1 - x_2|^\gamma} + \sup_{\substack{t_1,t_2,x\\t_1 \neq t_2}} \frac{|f(t_1,x) - f(t_2,x)|}{|t_1 - t_2|^{\gamma/2}}
	= \Norm[C^{0,\gamma}_\mathrm{par}(\overline{\Omega}^j_T)]{f}.
\end{align*}
Next we show the second equivalence using that $\nabla f(0,\cdot) \equiv 0$. We have
\begin{align*}
	\Norm[C^{1,\gamma}_\mathrm{par}(\overline{\Omega}^j_T)]{f}
	=& \sup_{\overline{\Omega}^j_T} |f(t,x)| + \sup_{\overline{\Omega}^j_T} |\nabla f(t,x)| + \sup_{\substack{t,x_1,x_2\\x_1 \neq x_2}} \frac{|\nabla f(t,x_1) - \nabla f(t,x_2)|}{|x_1 - x_2|^\gamma}\\
	&+ \sup_{\substack{t_1,t_2,x\\t_1 \neq t_2}} \frac{|\nabla f(t_1,x) - \nabla f(t_2,x)|}{|t_1 - t_2|^{\gamma/2}} + \sup_{\substack{t_1,t_2,x\\t_1 \neq t_2}} \frac{|f(t_1,x) - f(t_2,x)|}{|t_1 - t_2|^{(1+\gamma)/2}}\\
	\leq& \sup_{\overline{\Omega}^j_T} |f(t,x)| + T^{\gamma/2} \sup_{\overline{\Omega}^j_T} \frac{|\nabla f(t,x) - \nabla f(0,x)|}{|t - 0|^{\gamma/2}}\\
	&+ 2 \sup_{\substack{t_1,t_2,x_1,x_2\\(t_1,x_1) \neq (t_2,x_2)}} \frac{|\nabla f(t_1,x_1) - \nabla f(t_2,x_2)|}{|x_1 - x_2|^\gamma + |t_1 - t_2|^{\gamma/2}} + \sup_{\substack{t_1,t_2,x\\t_1 \neq t_2}} \frac{|f(t_1,x) - f(t_2,x)|}{|t_1 - t_2|^{(1+\gamma)/2}}\\
	\leq& \sup_{\overline{\Omega}^j_T} |f(t,x)| + T^{\gamma/2} \sup_{\substack{t_1,t_2,x\\t_1 \neq t_2}} \frac{|\nabla f(t_1,x) - \nabla f(t_2,x)|}{|t_1 - t_2|^{\gamma/2}}\\
	&+ 2 \sup_{\substack{t_1,t_2,x_1,x_2\\(t_1,x_1) \neq (t_2,x_2)}} \frac{|\nabla f(t_1,x_1) - \nabla f(t_2,x_2)|}{|x_1 - x_2|^\gamma + |t_1 - t_2|^{\gamma/2}} + \sup_{\substack{t_1,t_2,x\\t_1 \neq t_2}} \frac{|f(t_1,x) - f(t_2,x)|}{|t_1 - t_2|^{(1+\gamma)/2}}\\
	\leq& (2 + T^{\gamma/2}) |f|_{(1+\gamma)/2,1+\gamma}
\end{align*}
and
\begin{align*}
	|f|_{(1+\gamma)/2,1+\gamma}
	\leq& \sup_{\overline{\Omega}^j_T} |f(t,x)| + \sup_{\substack{t_1,t_2,x_1,x_2\\(t_1,x_1) \neq (t_2,x_2)}} \frac{|\nabla f(t_1,x_1) - \nabla f(t_1,x_2)|}{|x_1 - x_2|^\gamma + |t_1 - t_2|^{\gamma/2}}\\
	&+ \sup_{\substack{t_1,t_2,x_1,x_2\\(t_1,x_1) \neq (t_2,x_2)}} \frac{|\nabla f(t_1,x_2) - \nabla f(t_2,x_2)|}{|x_1 - x_2|^\gamma + |t_1 - t_2|^{\gamma/2}} + \sup_{\substack{t_1,t_2,x\\t_1 \neq t_2}} \frac{|f(t_1,x) - f(t_2,x)|}{|t_1 - t_2|^{(1+\gamma)/2}}\\
	\leq& \sup_{\overline{\Omega}^j_T} |f(t,x)| + \sup_{\overline{\Omega}^j_T} |\nabla f(t,x)| + \sup_{\substack{t,x_1,x_2\\x_1 \neq x_2}} \frac{|\nabla f(t,x_1) - \nabla f(t,x_2)|}{|x_1 - x_2|^\gamma}\\
	&+ \sup_{\substack{t_1,t_2,x\\t_1 \neq t_2}} \frac{|\nabla f(t_1,x) - \nabla f(t_2,x)|}{|t_1 - t_2|^{\gamma/2}} + \sup_{\substack{t_1,t_2,x\\t_1 \neq t_2}} \frac{|f(t_1,x) - f(t_2,x)|}{|t_1 - t_2|^{(1+\gamma)/2}}\\
	=& \Norm[C^{1,\gamma}_\mathrm{par}(\overline{\Omega}^j_T)]{f},
\end{align*}
which concludes the proof.

\section{Parabolic H\"older norm estimates}\label{sec:est}
We show auxiliary estimates for two parabolic Hölder norms. We first observe, for $f$ with $f(0,\cdot) \equiv 0$,
\begin{align}\label{eq:paraest}
	\begin{split}
		\Norm[C^{0,\gamma}_\mathrm{par}(\overline{\Omega}^j_T)]{f}
		=& \sup_{\overline{\Omega}^j_T} |f(t,x)| + \sup_{\substack{t,x_1,x_2\\x_1 \neq x_2}} \frac{|f(t,x_1) - f(t,x_2)|}{|x_1 - x_2|^\gamma} + \sup_{\substack{t_1,t_2,x\\t_1 \neq t_2}} \frac{|f(t_1,x) - f(t_2,x)|}{|t_1 - t_2|^{\gamma/2}}\\
		\leq& \sup_{\overline{\Omega}^j_T} |f(t,x)| + C \sup_{\overline{\Omega}^j_T} |\nabla f(t,x)| + \sup_{\substack{t_1,t_2,x\\t_1 \neq t_2}} \frac{|f(t_1,x) - f(t_2,x)|}{|t_1 - t_2|^{\gamma/2}}\\
		\leq& T^{\gamma/2} \Bigg( \sup_{\overline{\Omega}^j_T} \frac{|f(t,x) - f(0,x)|}{|t-0|^{\gamma/2}} + C \sup_{\overline{\Omega}^j_T} \frac{|\nabla f(t,x) - \nabla f(0,x)|}{|t-0|^{\gamma/2}}\\
		&\qquad + \sup_{\substack{t_1,t_2,x\\t_1 \neq t_2}} \frac{|f(t_1,x) - f(t_2,x)|}{|t_1 - t_2|^{\gamma}} \Bigg)\\
		\leq& T^{\gamma/2} \Bigg( C \sup_{\substack{t_1,t_2,x_1,x_2\\(t_1,x_1) \neq (t_2,x_2)}} \frac{|\nabla f(t_1,x_1) - \nabla f(t_2,x_2)|}{|x_1 - x_2|^\gamma + |t_1 - t_2|^{\gamma/2}} + 2 \sup_{\substack{t_1,t_2,x\\t_1 \neq t_2}} \frac{|f(t_1,x) - f(t_2,x)|}{|t_1 - t_2|^{(1+\gamma)/2}} \Bigg)\\
		\leq& \max\{2, C\} \, T^{\gamma/2} |f|_{(1+\gamma)/2,1+\gamma},
	\end{split}
\end{align}
where $C$ depends on the diameter of $\Omega$.
Note that we  can replace the norms on the left- and right-hand sides by their equivalent versions in Appendix \ref{sec:equiv}.

Next we derive an estimate for the time integral of a function $f \in C^{0,\gamma}_\mathrm{par}(\overline{\Omega}^j_T)$. 
We have
\begin{align*}
	\begin{split}
		\Norm[C^{0,\gamma}_\mathrm{par}(\overline{\Omega}^j_T)]{\int_{0}^{t} f(s,x) \D s}
		=& \sup_{(t,x) \in \overline{\Omega}^j_T} \AV{\int_{0}^{t} f(s,x) \D s} + \sup_{\substack{t_1,t_2,x\\t_1\neq t_2}} \frac{\AV{\int_{t_1}^{t_2} f(s,x) \D s}}{\AV{t_1-t_2}^{\gamma/2}}\\
		&+ \sup_{\substack{t,x_1,x_2\\x_1 \neq x_2}} \frac{\AV{\int_{0}^{t} f(s,x_1) - f(s,x_2) \D s}}{\AV{x_1-x_2}^\gamma}\\
		\leq& \bl \sup_{(t,x) \in \overline{\Omega}^j_T} \AV{f(t,x)} \br (T + T^{1-\gamma/2}) + \bl \sup_{\substack{t,x_1,x_2\\x_1 \neq x_2}} \frac{\AV{f(t,x_1) - f(t,x_2)}}{\AV{x_1-x_2}^\gamma} \br T
	\end{split}
\end{align*}
from which we conclude
\begin{align}\label{eq:intholder}
	\begin{split}
		\Norm[C^{0,\gamma}_\mathrm{par}(\overline{\Omega}^j_T)]{\int_{0}^{t} f(s,x) \D s}
		&\leq C T^{1-\gamma/2} \bl \sup_{(t,x) \in \overline{\Omega}^j_T} \AV{f(t,x)} + \sup_{\substack{t,x_1,x_2\\(t,x_1) \neq (t,x_2)}} \frac{\AV{f(t,x_1) - f(t,x_2)}}{\AV{x_1-x_2}^\gamma} \br \\
		&\leq C T^{1-\gamma/2} \Norm[C^{0,\gamma}_\mathrm{par}(\overline{\Omega}^j_T)]{f}
	\end{split}
\end{align}
for $f \in C^{0,\gamma}_\mathrm{par}(\overline{\Omega}^j_T)$.

\end{document}